\theoremstyle{plain}
\newtheorem*{theorem*}{Theorem}
\newtheorem{theorem}{Theorem}[section]
\crefname{theorem}{Theorem}{Theorems}
\Crefname{theorem}{Theorem}{Theorems}
\newtheorem*{lemma*}{Lemma}
\newtheorem{lemma}[theorem]{Lemma}
\crefname{lemma}{Lemma}{Lemmas}
\Crefname{lemma}{Lemma}{Lemmas}
\newtheorem*{claim*}{Claim}
\newtheorem{claim}[theorem]{Claim}
\crefname{claim}{Claim}{Claims}
\Crefname{claim}{Claim}{Claims}
\newtheorem{proposition}[theorem]{Proposition}
\crefname{proposition}{Proposition}{Propositions}
\Crefname{proposition}{Proposition}{Propositions}
\crefname{corollary}{Corollary}{Corollaries}
\Crefname{corollary}{Corollary}{Corollaries}
\newtheorem{conjecture}[theorem]{Conjecture}
\crefname{conjecture}{Conjecture}{Conjectures}
\Crefname{conjecture}{Conjecture}{Conjectures}
\crefname{question}{Question}{Questions}
\Crefname{question}{Question}{Questions}
\crefname{observation}{Observation}{Observations}
\Crefname{observation}{Observation}{Observations}
\newtheorem{example}[theorem]{Example}
\crefname{example}{Example}{Examples}
\Crefname{example}{Example}{Examples}
\theoremstyle{definition}
\crefname{problem}{Problem}{Problems}
\Crefname{problem}{Problem}{Problems}
\newtheorem{definition}[theorem]{Definition}
\crefname{definition}{Definition}{Definitions}
\Crefname{definition}{Definition}{Definitions}
\theoremstyle{remark}
\newtheorem*{remark}{Remark}
\crefname{remark}{Remark}{Remarks}
\Crefname{remark}{Remark}{Remarks}
\xpatchcmd{\proof}{\itshape}{\normalfont\proofnamefont}{}{}
\newcommand{\proofnamefont}{}
\renewcommand{\proofnamefont}{\bfseries}
\newcommand{\remove}[1]{}
\newcommand{\floor}[1]{
	\left\lfloor #1 \right\rfloor
}
\newcommand{\modthree}[1]{\equiv #1 \,(\mathrm{mod} \,\,3)}
\def\eps{\varepsilon}
\def \HH {\mathcal{H}}
\def \C {\mathcal{C}}
\def \F {\mathcal{F}}
\def \Fc {\mathcal{F}}
\def \Fp {\mathcal{F}^+}
\def \G {\mathcal{G}}
\def \be {\bar{e}}
\def \As {A^*}
\def \Bs {B^*}
\def \Hs {\mathcal{H}^*}
\def \tP {\tilde{P}}
\def \tQ {\tilde{Q}}
\DeclareMathOperator{\final}{final}
\DeclareMathOperator{\inn}{in}
\DeclareMathOperator{\out}{out}
\DeclareMathOperator{\ex}{ex}
\DeclareTextCompositeCommand{\v}{OT1}{l}{l\nobreak\hspace{-.1em}'}
\title{The Tur\'an density of tight cycles in three-uniform hypergraphs}
\author{
	Nina Kam\v{c}ev\thanks{Department of Mathematics, Faculty of Science, University of Zagreb, Croatia. Email: \texttt{nina.kamcev@math.hr}. Research supported by the European Union’s Horizon 2020 research and innovation programme [MSCA GA No 101038085].}
	\and
	Shoham Letzter\thanks{
		Department of Mathematics, 
		University College London, 
		Gower Street, London WC1E~6BT, UK. 
		Email: \texttt{s.letzter}@\texttt{ucl.ac.uk}. 
		Research supported by the Royal Society.
	}
	\and
	Alexey Pokrovskiy\thanks{	Department of Mathematics, 
		University College London, 
		Gower Street, London WC1E~6BT, UK. 
		Email: \texttt{dralexeypokrovskiy@gmail.com}.}
}
\begin{document}
	
	\date{}
	\maketitle
	
	\begin{abstract}
		
		\setlength{\parskip}{\medskipamount}
		\setlength{\parindent}{0pt}
		\noindent
		The \emph{Tur\'an density} of an $r$-uniform hypergraph $\HH$, denoted $\pi(\HH)$, is the limit of the maximum density of an $n$-vertex $r$-uniform hypergraph not containing a copy of $\HH$, as $n \to \infty$. 
		
		Denote by $\C_{\ell}$ the $3$-uniform tight cycle on $\ell$ vertices. Mubayi and R\"odl gave an ``iterated blow-up'' construction showing that the Tur\'an density of $\C_5$ is  at least $2\sqrt{3} - 3 \approx 0.464$, and this bound is conjectured to be tight. %Mubayi and R\"odl conjectured that the Tur\'an density of $\C_5$ is $2\sqrt{3} - 3 \approx 0.464$, and gave an ``iterated blow-up'' construction showing that this is tight.
		Their construction also does not contain $\C_{\ell}$ for larger $\ell$ not divisible by $3$, which suggests that it might be the extremal construction for these hypergraphs as well.
		Here, we determine the Tur\'an density of $\C_{\ell}$ for all large $\ell$ not divisible by $3$, showing that indeed $\pi(\C_{\ell}) = 2\sqrt{3} - 3$.        To our knowledge, this is the first example of a Tur\'an density being determined where the extremal construction is an iterated blow-up construction.
		
		A key component in our proof, which may be of independent interest, is a $3$-uniform analogue of the statement ``a graph is bipartite if and only if it does not contain an odd cycle''.
	\end{abstract}
	
	\section{Introduction}
	For an $r$-uniform hypergraph $\mathcal H$, the \emph{Tur\'an number} of $\mathcal H$, denoted $\ex(n, \mathcal H)$, is defined as the maximum number of edges an $n$-vertex $r$-uniform hypergraph can have without containing a copy of $\mathcal H$ as a subgraph. For ($2$-uniform) graphs, we have a fairly good understanding of Tur\'an numbers.
	The first theorem proved about them is Mantel's theorem \cite{mantel1907problem}, which says that, for the triangle, we have $\ex(n, K_3)= \floor{n^2/4}$. This was generalised by Tur\'an \cite{turan1941extremal} who showed that $\ex(n,K_r) \approx (1-\frac{1}{r-1})\binom{n}{2}$. 
	For non-complete graphs we know less, and usually only know what the Tur\'an number of a graph is asymptotically, up to $o(n^2)$ terms.
	Because of this, we study the \emph{Tur\'an density} of an $r$-uniform hypergraph $\mathcal H$, denoted $\pi(\HH)$, and defined as $\pi(\mathcal H)=\lim_{n\to \infty} \frac{\ex(n,\mathcal H)}{\binom n r}$. This limit is known to exist, and, moreover, it is clear that $\pi(\mathcal H)\in [0,1]$ for every $H$. 
	The Tur\'an densities of ($2$-uniform) graphs were completely determined by Erd\H{o}s and Stone~\cite{erdos1946structure}, who  showed that every graph $H$ satisfies $\pi(H)=1-\frac{1}{\chi(H)-1}$.

	The special case of the Erd\H{o}s--Stone theorem for bipartite graphs can be generalised to higher uniformities, as follows (see \cite{erdos1971extremal}): every $r$-partite $r$-uniform hypergraph $\HH$ satisfies $\pi(\HH) = 0$. (An $r$-uniform hypergraph $\HH$ is said to be \emph{$r$-partite} if its vertices can be $r$-coloured so that every edge has one vertex of each colour.)
	Nevertheless, in general, our understanding of Tur\'an numbers in higher uniformities is very limited, and there are only a small number of hypergraphs whose Tur\'an densities are known; see Keevash \cite{keevash11} for a comprehensive survey of the topic listing a number of such hypergraphs. 
	A notable, relatively early example is a result of de Caen and F\"uredi \cite{de2000maximum} showing that the Tur\'an density of the Fano plane is  $3/4$ (see also \cite{furedi2005triple, keevash2005turan}). More recently,  the impactful computer-assisted ``flag-algebra'' technique has been used to obtain a number of sharpest known upper bounds on Tur\'an densities (see \cite{baber2011new, keevash11, razborov2013flag} and the references therein). 
	
	Given the sporadicity of hypergraphs whose Tur\'an densities are known, it is unsurprising that there are many conjectures about Tur\'an densities of specific hypergraphs. The most famous of these is Tur\'an's conjecture \cite{turan1961research}, that the Tur\'an density of the \textit{tetrahedron} $K_4^{(3)}$ is $5/9$. Frankl and F\"uredi \cite{ff84} conjectured that the  Tur\'an density of the 3-edge subgraph of $K_4^{(3)}$ (usually denoted $K_4^-$) is $2/7$. 
	A particularly relevant conjecture for us concerns tight   cycles. The \emph{$r$-uniform tight cycle} of length $\ell$, denoted $\mathcal C_{\ell}^r$, is defined to be the hypergraph with vertex set $\{1, \dots, \ell\}$ and hyperedges all sets of the form $\{x, x+1,\dots,   x+r-1 \!\pmod {\ell}\}$. The following conjecture, usually attributed to Mubayi and R\"odl, appears for instance in \cite{falgas2012turan,mubayi2011hypergraph}. 
	\begin{conjecture}
		\label{Conjecture_Mubayi_Rodl}
		$\pi(\mathcal C_5^3)=2\sqrt 3-3$.
	\end{conjecture}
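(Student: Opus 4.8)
\medskip
\noindent\textbf{Proof strategy.} The conjectured equality splits into a lower bound, $\pi(\mathcal{C}_5^3) \ge 2\sqrt3 - 3$, and an upper bound, $\pi(\mathcal{C}_5^3) \le 2\sqrt3 - 3$. For the lower bound the plan is to analyse the Mubayi--R\"odl iterated blow-up $\Hs$: splitting its edges into those crossing the top-level tripartition and those contained in a single part, and recursing inside each part, one obtains a fixed-point equation for the edge density $\rho$ of $\Hs$ whose relevant root is exactly $2\sqrt3 - 3$. One then verifies that $\Hs$ is $\mathcal{C}_5^3$-free --- indeed $\mathcal{C}_\ell^3$-free for all $\ell$ not divisible by $3$ --- by assigning to each edge the first level of the recursion at which its three vertices are separated, and checking that any tight cycle in $\Hs$ would have to ``close up'' consistently across this hierarchy, which forces its length to be divisible by $3$. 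This half is essentially routine.

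The substance is the upper bound, where the plan is to bootstrap the machinery of this paper. Step one is a stability statement: every $\mathcal{C}_5^3$-free $3$-graph on $n$ vertices with at least $(2\sqrt3 - 3 - \eps)\binom n3$ edges can be made isomorphic to $\Hs$ on $n$ vertices by changing $o_\eps(n^3)$ edges. To prove this, use the ``$3$-uniform bipartiteness'' principle highlighted in the abstract: a $3$-graph containing no $\mathcal{C}_\ell^3$ for any $\ell \not\equiv 0 \pmod 3$ admits a homomorphism into a canonical recursive blow-up pattern, so that a dense $\mathcal{C}_5^3$-free $3$-graph is, up to a sparse error, a sub-hypergraph of a blow-up of an $\Hs$-like pattern; a convexity argument on the part sizes, using the fixed-point equation for $\rho$, then forces the pattern and the proportions to be those of $\Hs$. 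Step two is an exact argument: once $G$ is $o(n^3)$-close to $\Hs$, one peels off the top blow-up layer, discards the sparse set of atypical edges, applies induction to the three parts, and invokes the recursion for $\rho$ to conclude $e(G) \le (2\sqrt3 - 3)\binom n3$, with equality only for $\Hs$.

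The genuine obstacle is obtaining the \emph{sharp} constant inside the stability step. For large $\ell$ the tight cycle $\mathcal{C}_\ell^3$ is long enough that any sufficiently dense ``almost structured'' region has room to route it, so a near-extremal $3$-graph that does not already resemble $\Hs$ contains the cycle; with the single fixed hypergraph $\mathcal{C}_5^3$ this flexibility vanishes, and one is exactly in the regime where present flag-algebra computations stall slightly above $2\sqrt3-3$. Closing this small gap will likely require a new ingredient --- for instance a weighting of the flag-algebra semidefinite program that is sensitive to the iterated structure, or a direct combinatorial argument exploiting the fact that the link graph of every vertex of a $\mathcal{C}_5^3$-free $3$-graph is itself highly constrained, applied recursively down the blow-up hierarchy --- together with a separate argument ruling out sporadic dense $\mathcal{C}_5^3$-free configurations that are far from every blow-up of $\Hs$ (a possibility the long-cycle arguments exclude automatically but which must here be treated directly).
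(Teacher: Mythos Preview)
The statement you are asked to prove is labelled a \emph{conjecture} in the paper, and the paper does not prove it; it is left open. What the paper actually establishes is the analogue for long cycles, $\pi(\mathcal{C}_\ell^3)=2\sqrt3-3$ for all sufficiently large $\ell$ with $3\nmid\ell$ (\Cref{thm:single-cycle}). So there is no ``paper's own proof'' to compare your proposal against, and your write-up is not a proof either: you say explicitly that the upper bound faces a ``genuine obstacle'' and that closing the gap ``will likely require a new ingredient''. A list of ideas one might try, together with an honest acknowledgement that none of them is known to work, is a research plan, not a proof.

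Two further points. First, even your lower-bound sketch does not match the Mubayi--R\"odl construction as described in the paper. You speak of a ``top-level tripartition'' with recursion ``inside each part'', which is the iterated balanced $3$-partite construction relevant to $\mathcal{C}_5^-$ (cf.\ \Cref{conj:c5-minus}). The actual extremal example here (\Cref{ex:iterated-blow-up}) is a \emph{nested} sequence $V_1\supseteq V_2\supseteq\cdots$ with all triples having two vertices in $V_i\setminus V_{i+1}$ and one in $V_{i+1}$; equivalently, one splits $V$ into $A\cup B$, takes every $AAB$ triple, and recurses inside $B$ only. The optimal ratio $|A|/n$ is $\beta=(3-\sqrt3)/2$, and the density fixed-point comes from a two-part, not three-part, recursion. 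Second, the stability route you outline for the upper bound is essentially what the paper carries out for long $\ell$ (good colourings via \Cref{thm:good-colouring}, the cherry-maximisation \Cref{theorem_intro_falgas_ravry}, and the stability \Cref{thm:stability}); the paper itself points out that this machinery does not reach $\ell=5$, because one cannot pass to a bounded-diameter subhypergraph and still forbid only $\mathcal{C}_5^3$. Your proposal does not supply anything beyond what the paper already has, so the gap remains.
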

	The lower bound $\pi(\mathcal C_5^3) \geq 2\sqrt 3-3 \approx 0.464$ was found by Mubayi and R\"odl (see \Cref{ex:iterated-blow-up} below for a description of their example) and the best upper bound is due to Razborov~\cite{razborov20103}, who showed $\pi(\mathcal C_5^3)\leq 0.468$. 
	
	One basic reason why hypergraphs are more difficult than graphs is that the extremal $\mathcal H$-free hypergraphs  can be much more complicated than the extremal graphs. In the 2-uniform case, the Erd\H{o}s--Stone theorem shows that all optimal graphs are close to being complete multipartite. For higher-uniformity hypergraphs there have been numerous papers discovering more complicated possible extremal hypergraphs, for instance~\cite{brown83,ff84,blm11}, as well as Conjectures~\ref{Conjecture_Mubayi_Rodl} and~\ref{conj:c5-minus}. For some hypergraphs, such as $K_4^{(3)}$, the conjectured extremal constructions are even non-unique and very different from each other~\cite{brown83,kostochka84,flaass88,razborov11}.  
	
	One class of extremal examples, which does not occur for graphs, is an ``iterated blow-up construction''. The conjectured extremal example for \Cref{Conjecture_Mubayi_Rodl} is an instance of such a construction.
	\begin{example}[Iterated blow-up construction with no copies of $\C_{5}^3$] \label{ex:iterated-blow-up}
		Consider nested vertex sets $V_1\supseteq \ldots \supseteq  V_{t}$  with $|V_i| - |V_{i+1}| = x_i$ for $i \in [t]$, with the convention $|V_{t+1}| = \emptyset$. Let $\mathcal H(x_1, \ldots, x_t)$ be a 3-uniform hypergraph on the vertex set $V_1$, where $xyz$ is an edge whenever $x,y\in V_i \setminus V_{i+1}$ and $z\in V_{i+1}$ for some $i$ (see Figure~\ref{Figure_extremal_hypergraph}).
		
		We claim that there is no copy of $\C_5^3$. To see this, say that an edge with two vertices in $V_i \setminus V_{i+1}$ and one vertex in $V_{i+1}$ has \emph{type $i$}, and observe that if two edges $e$ and $f$ intersect in two vertices, they are of the same type. Thus, if $C = (u_1 \ldots u_5)$ is a cycle, then its edges all have the same type, say $i$. Without loss of generality, $u_1, u_2 \in V_i \setminus V_{i+1}$ and  $u_3 \in V_{i+1}$. It follows that $u_4 \in V_i \setminus V_{i+1}$, and thus $u_5 \in V_i \setminus V_{i+1}$. But then $u_4 u_5 u_1$ is not an edge of $\HH(x_1, \ldots, x_t)$, a contradiction.
		
		Thus $\pi(\mathcal C_5^3)\geq e(\mathcal H(x_1, \ldots, x_t))/\binom n3$ for all choices of $x_1, \ldots, x_t$ with $x_1 + \ldots + x_t = n$. Let $f(n)$ denote the maximum number of edges that such a hypergraph on $n$ vertices can have i.e $f(n):=\max(e(\mathcal H(x_1, \ldots, x_t): x_1, \dots, x_t \geq 1,\, x_1 + \dots + x_t = n)$. 
		It is possible to show $\lim_{n\to \infty} f(n)/\binom n3=2\sqrt 3-3$ (see Section~\ref{sec:optimal} for details), which gives $\pi(\mathcal C_5^3)\geq 2\sqrt 3-3$.
		
		Let $\G_n = \HH(x_1, \ldots, x_t)$ for a choice of $x_1 \ge \ldots \ge x_t$ such that $n = x_1 + \ldots + x_t$ and $e(\G_n) = f(n)$.
	\end{example}
	
	\begin{figure}
		\tikzset{every picture/.style={line width=0.75pt}} 
		
		\begin{center}
			\begin{tikzpicture}[x=0.75pt,y=0.75pt,yscale=-1,xscale=1]
				%uncomment if require: \path (0,300); %set diagram left start at 0, and has height of 300
				
				%Shape: Triangle [id:dp9868775185342231] 
				\draw  [draw opacity=0][fill={rgb, 255:red, 180; green, 180; blue, 180 }  ,fill opacity=1 ] (407.01,155.57) -- (266,182.14) -- (266,129) -- cycle ;
				%Rounded Rect [id:dp0508644736940993] 
				\draw   (100,53.15) .. controls (100,40.99) and (109.85,31.14) .. (122.01,31.14) -- (463,31.14) .. controls (475.15,31.14) and (485.01,40.99) .. (485.01,53.15) -- (485.01,246.14) .. controls (485.01,258.29) and (475.15,268.14) .. (463,268.14) -- (122.01,268.14) .. controls (109.85,268.14) and (100,258.29) .. (100,246.14) -- cycle ;
				%Rounded Rect [id:dp7681984068056482] 
				\draw   (228,58.94) .. controls (228,47.45) and (237.31,38.14) .. (248.8,38.14) -- (453.21,38.14) .. controls (464.69,38.14) and (474.01,47.45) .. (474.01,58.94) -- (474.01,241.34) .. controls (474.01,252.83) and (464.69,262.14) .. (453.21,262.14) -- (248.8,262.14) .. controls (237.31,262.14) and (228,252.83) .. (228,241.34) -- cycle ;
				%Rounded Rect [id:dp361637616123232] 
				\draw   (310.01,59.81) .. controls (310.01,51.71) and (316.57,45.14) .. (324.68,45.14) -- (453.34,45.14) .. controls (461.44,45.14) and (468.01,51.71) .. (468.01,59.81) -- (468.01,240.47) .. controls (468.01,248.57) and (461.44,255.14) .. (453.34,255.14) -- (324.68,255.14) .. controls (316.57,255.14) and (310.01,248.57) .. (310.01,240.47) -- cycle ;
				%Rounded Rect [id:dp39769005803528246] 
				\draw   (382.01,58.29) .. controls (382.01,54.34) and (385.21,51.14) .. (389.15,51.14) -- (451.86,51.14) .. controls (455.8,51.14) and (459.01,54.34) .. (459.01,58.29) -- (459.01,229.99) .. controls (459.01,233.94) and (455.8,237.14) .. (451.86,237.14) -- (389.15,237.14) .. controls (385.21,237.14) and (382.01,233.94) .. (382.01,229.99) -- cycle ;
				%Shape: Triangle [id:dp2500520873569634] 
				\draw  [draw opacity=0][fill={rgb, 255:red, 176; green, 176; blue, 176 }  ,fill opacity=1 ] (405.01,94.82) -- (180.5,134.14) -- (180.5,55.5) -- cycle ;
				%Shape: Triangle [id:dp9695688925140422] 
				\draw  [fill={rgb, 255:red, 0; green, 0; blue, 0 }  ,fill opacity=1 ] (343.01,155.07) -- (248,181.14) -- (248,129) -- cycle ;
				%Shape: Triangle [id:dp4487527987679534] 
				\draw  [color={rgb, 255:red, 0; green, 0; blue, 0 }  ,draw opacity=1 ][fill={rgb, 255:red, 0; green, 0; blue, 0 }  ,fill opacity=1 ] (405.01,206.07) -- (363,225.14) -- (363,187) -- cycle ;
				%Shape: Triangle [id:dp9102729512869399] 
				\draw  [draw opacity=0][fill={rgb, 255:red, 61; green, 61; blue, 61 }  ,fill opacity=1 ] (333.01,93.07) -- (154,131.14) -- (154,55) -- cycle ;
				%Shape: Triangle [id:dp6104473813479845] 
				\draw  [color={rgb, 255:red, 0; green, 0; blue, 0 }  ,draw opacity=1 ][fill={rgb, 255:red, 0; green, 0; blue, 0 }  ,fill opacity=1 ] (263.01,91.57) -- (137,129.14) -- (137,54) -- cycle ;
				
				% Text Node
				\draw (131,193.4) node [anchor=north west][inner sep=0.75pt]  [font=\LARGE]  {$V_{1}$};
				% Text Node
				\draw (251,194.4) node [anchor=north west][inner sep=0.75pt]  [font=\LARGE]  {$V_{2}$};
				% Text Node
				\draw (318,197.4) node [anchor=north west][inner sep=0.75pt]  [font=\LARGE]  {$V_{3}$};
				% Text Node
				\draw (419,195.4) node [anchor=north west][inner sep=0.75pt]  [font=\LARGE]  {$V_{4}$};
			\end{tikzpicture}
		\end{center}
		\caption{An illustration of the hypergraph $\mathcal H(x_1, x_2, x_3, x_4)$. }\label{Figure_extremal_hypergraph}
	\end{figure}
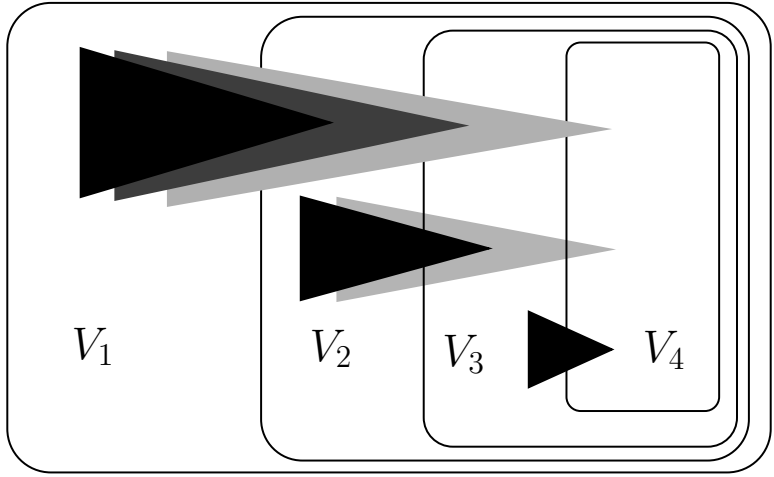
	
	Note that in the above construction, $\mathcal G_n$ has no tight cycles of lengths $\ell\equiv 1$ or $2 \!\pmod 3$ either. So it is plausible that Conjecture~\ref{Conjecture_Mubayi_Rodl} could be strengthened to say that $\pi(\mathcal C_\ell^3)=2\sqrt 3-3$ for all $\ell\geq 5$ with $\ell\equiv 1$ or $2 \pmod 3$ (notice that $\C_4^3 = K_4^{(3)}$, and there are known examples of $K_4^{(3)}$-free $3$-uniform graphs with density at least $5/9 > 2\sqrt{3} - 3$). The main result of our paper is to show that this is true for sufficiently large $\ell$.
	\begin{restatable}{theorem}{thmSingleCycle}\label{thm:single-cycle}
		Let $\ell$ be sufficiently large with $\ell\equiv 1$ or $2 \!\pmod 3$. Then $\pi(\mathcal C_\ell^3)=2\sqrt 3-3$.
	\end{restatable}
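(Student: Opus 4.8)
The plan is as follows. The lower bound $\pi(\mathcal{C}_\ell^3)\ge 2\sqrt3-3$ is already contained in Example~\ref{ex:iterated-blow-up} and the discussion after it: the hypergraph $\mathcal{G}_n$ has edge density tending to $2\sqrt3-3$ (by Section~\ref{sec:optimal}) and contains no tight cycle of length $\not\equiv 0\pmod 3$ --- the argument given there for $\mathcal{C}_5^3$ extends verbatim, since the edges of any tight cycle of $\mathcal{H}(x_1,\dots,x_t)$ share a common type and tracing around then forces the length to be divisible by $3$. So it remains to show $\pi(\mathcal{C}_\ell^3)\le 2\sqrt3-3$: fixing $\eps>0$, we must prove that every $\mathcal{C}_\ell^3$-free $3$-uniform hypergraph $H$ on $n$ vertices, with $n$ large, has $e(H)\le(2\sqrt3-3+\eps)\binom n3$. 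Call a tight cycle, or a closed tight walk, \emph{bad} if its length is not divisible by $3$.

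The first step is to apply hypergraph regularity to $H$, obtaining a bounded-size reduced hypergraph $R$ whose weighted edge density agrees with that of $H$ up to $\eps/4$, with edges of $R$ recording dense regular triads and with the property that tight walks of $R$ (successive edges sharing two parts) lift to tight walks of $H$. The crucial transference step is: \emph{if $R$ contains a bad closed tight walk, then $H$ contains a copy of $\mathcal{C}_\ell^3$}. Indeed, traversing a bad closed tight walk of $R$ twice yields closed tight walks of lengths in \emph{both} nonzero residues modulo $3$, so we may fix one of length $m\equiv\ell\pmod 3$; since $\ell$ is large, we then lengthen it to exactly $\ell$ by inserting $(\ell-m)/3$ short detours inside a single dense regular triad it visits; finally, regularity lets us instantiate the resulting closed tight walk of length $\ell$ as a genuine $\mathcal{C}_\ell^3$ on $\ell$ \emph{distinct} vertices of $H$, the parts being much larger than $\ell$. (This is why we work with closed tight walks of $R$ rather than tight cycles of $R$, and why $\ell$ must be large relative to the bounds produced by regularity.) Given the transference step, a $\mathcal{C}_\ell^3$-free $H$ yields a reduced hypergraph with no bad closed tight walk, a property that is preserved when we delete the $\eps/4$-fraction of exceptional triads; so if $e(H)>(2\sqrt3-3+\eps)\binom n3$ we obtain a cleaned reduced hypergraph $R'$ of density exceeding $2\sqrt3-3+\eps/2$ and with no bad closed tight walk.

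The structural heart of the proof is a $3$-uniform analogue of ``a graph is bipartite if and only if it has no odd cycle'': \emph{a $3$-uniform hypergraph has no bad closed tight walk if and only if it is a subhypergraph of an iterated blow-up $\mathcal{H}(x_1,\dots,x_t)$} --- equivalently, if and only if its vertices carry a ``level'' function $\phi\colon V\to\mathbb{Z}$ for which, on every edge, the two smallest levels are equal and the third is strictly larger. The ``if'' direction is exactly the computation in Example~\ref{ex:iterated-blow-up}. For the ``only if'' direction --- the genuinely new ingredient --- one passes to the auxiliary digraph on ordered pairs of vertices, with an arc $(u,v)\to(v,w)$ for each edge $\{u,v,w\}$: the hypothesis says that every closed walk in this digraph has length divisible by $3$, which pins down the period of each of its strongly connected components, and one then shows that this common periodicity can be recorded by a single $\mathbb{Z}$-valued potential $\phi$ on $V(H)$, propagated along tight paths, with the absence of bad closed walks ruling out inconsistencies --- in the same spirit in which BFS-layers witness bipartiteness, with tight-connectivity in the role of connectivity. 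Applying this to $R'$ exhibits $R'$ as a subhypergraph of some iterated blow-up, so by the optimisation of Section~\ref{sec:optimal} (robust, since it is a continuous optimisation over the simplex of level-sizes) its density is at most $2\sqrt3-3+o(1)<2\sqrt3-3+\eps/2$, contradicting the previous paragraph. Hence $e(H)\le(2\sqrt3-3+\eps)\binom n3$, and letting $\eps\to0$ gives $\pi(\mathcal{C}_\ell^3)\le 2\sqrt3-3$.

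I expect the main obstacle to be the ``only if'' direction of the structural lemma. Unlike in the graph case, a hypergraph with no bad tight cycle need not admit a homomorphism to a fixed finite target --- the most generous such constraint, a $3$-colouring in which every edge receives colours $\{i,i,i+1\}$ for some $i\in\mathbb{Z}/3$, caps the density at $4/9<2\sqrt3-3$ --- so the genuinely iterated, unbounded-depth ordering must be extracted, and one must be careful with tight-connectivity, since the naive propagation of $\phi$ may be locally consistent yet globally ``twisted'', precisely the phenomenon described by the cycle space in the bipartite case. A secondary, more routine but still delicate, burden is making the regularity-based transference robust: verifying that ``no bad closed tight walk'' survives the cleaning of the reduced hypergraph, and that any bad closed tight walk of bounded length can in fact be inflated to a tight cycle of the exact prescribed length $\ell$ on distinct vertices.
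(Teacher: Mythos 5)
Your lower-bound paragraph is fine, but the upper bound collapses at the structural lemma it rests on, namely ``a $3$-uniform hypergraph has no closed tight walk of length $\not\equiv 0\pmod 3$ if and only if it is a subhypergraph of an iterated blow-up $\HH(x_1,\dots,x_t)$, equivalently admits a level function $\phi$ with the two smallest values equal and the third strictly larger on every edge''. That equivalence is false. Write $\mathcal K(x,p,q;y)$ for the copy of $K_4^-$ with edges $xpy$, $xqy$, $pqy$. Every admissible level function of $\mathcal K(x,p,q;y)$ satisfies $\phi(x)=\phi(p)=\phi(q)<\phi(y)$: if instead, say, $\phi(x)=\phi(y)<\phi(p)$ in the edge $xpy$, then the edge $xqy$ forces $\phi(q)>\phi(x)=\phi(y)$, and then $y$ is the unique minimum of the edge $pqy$, which is not allowed; the remaining cases are symmetric. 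Now let $\HH^*:=\mathcal K(a,p_1,q_1;b)\cup\mathcal K(b,p_2,q_2;c)\cup\mathcal K(c,p_3,q_3;a)$, with $p_1,q_1,\dots,q_3$ new and distinct. A level function for $\HH^*$ would give $\phi(a)<\phi(b)<\phi(c)<\phi(a)$, so none exists and $\HH^*$ embeds in no iterated blow-up. Yet $\HH^*$ has no closed tight walk of length $\not\equiv 0 \pmod 3$: consecutive edges of such a walk share two vertices, whereas edges from different copies of $K_4^-$ share at most one vertex (one of $a,b,c$), so every closed tight walk stays inside a single copy, and each copy admits the level function $\phi\equiv 1$ on $\{x,p,q\}$, $\phi(y)=2$, hence has none by the computation of \Cref{ex:iterated-blow-up}. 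The moral is that the correct $3$-uniform analogue of bipartiteness (\Cref{thm:good-colouring}) colours the \emph{pairs} of the shadow, not the vertices: pair colourings glue trivially across tight components meeting in single vertices, vertex potentials do not. And because hypergraphs with a good colouring form a strictly larger class than subhypergraphs of iterated blow-ups, the bound $e(\HH)\le f(n)$ cannot be read off the extremal construction; it requires the genuinely extremal fact that a coloured $K_n$ has at most $f(n)$ cherries (\Cref{theorem_intro_falgas_ravry}, Falgas-Ravry--Vaughan and Huang, reproved in the paper by symmetrisation). Your sketch via periods of the pair digraph essentially reconstructs the pair colouring; the promotion to a single $\mathbb Z$-valued vertex potential is exactly the step that fails.

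Separately, even granting such a structural lemma, your regularity scheme proves a weaker statement than \Cref{thm:single-cycle} because of the order of quantifiers. For fixed $\ell$ you must handle \emph{every} $\eps>0$, but your transference needs $\ell$ to exceed (roughly twice) the length of a bad closed tight walk in the reduced hypergraph, which is bounded only in terms of the regularity constants and hence in terms of $\eps$; this dependence is real, since the shortest bad closed tight walk of a hypergraph that has one can be arbitrarily long (e.g.\ a long tight cycle of length $\not\equiv 0\pmod 3$). So the argument yields only: for every $\eps$ there is $L(\eps)$ with $\pi(\C^3_\ell)\le 2\sqrt3-3+\eps$ for $\ell\ge L(\eps)$, i.e.\ the limiting statement $\lim_{\ell}\pi(\C^3_\ell)=2\sqrt3-3$. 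This is precisely the shortfall the paper identifies after \Cref{cor:weak-main}, and removing it is where most of its work lies: a stability theorem for cherries (\Cref{thm:stability}) and an exact structural step (\Cref{thm:partition}, yielding \Cref{thm:pseudocycles} with an $O(1)$ error), after which the fixed cycle is reached via the blow-up invariance of Tur\'an densities (\Cref{theorem_blow_up}) instead of regularity. Your proposal contains no counterpart of this second half.
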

	To our knowledge, this is the first example of a Tur\'an density being determined where the extremal construction is an iterated blow-up construction, and could be a step towards Conjecture~\ref{Conjecture_Mubayi_Rodl}.
	This is also one of the few examples of hypergraphs with irrational Tur\'an densities. Such hypergraphs were recently found by Yan and Peng~\cite{yp22}, as well as Wu~\cite{wu22}, motivated by the work of Chung and Graham~\cite{cg98}, Baber and Talbot~\cite{baber2011new}, and Pikhurko~\cite{pikhurko2014possible}.
	We remark that Conjecture~\ref{Conjecture_Mubayi_Rodl} would imply Theorem~\ref{thm:single-cycle} via \Cref{theorem_blow_up} below (using the same argument as in the proof of Theorem~\ref{thm:single-cycle} in Section~\ref{sec:diameter}).
	
	One of our main tools, which may be of independent interest, is a 3-uniform analogue of the statement ``a graph is bipartite if and only if it does not contain an odd cycle''; see \Cref{thm:good-colouring}. Thus, we characterise 3-uniform hypergraphs $\HH$ which do not contain \textit{homomorphic images} of cycles $\C^{3}_\ell$ with $3 \nmid \ell$, in terms of certain colourings of $V(\HH)^2$, as explained in the proof overview.
	
	Throughout the paper we will informally refer to 3-uniform cycles of length $\ell\equiv 1$ or $2 \pmod 3$ as \emph{odd cycles}, and we will often refer to $3$-uniform hypergraphs as 3-graphs.
	
	\subsection*{Related results}
	As we mentioned, there are very few hypergraphs with a known Tur\'an density, but let us state some recent results on Tur\'an-type problems for tight cycles.
	A well-studied hypergraph parameter is the so-called \emph{uniform Tur\'an density}, the infimum over all $d$ for which any
	sufficiently large hypergraph with the property that all its linear-size subhypergraphs have density at
	least $d$ contains $\HH$. This line of research was initiated by Erd\H{o}s and S\'os 
	\cite{erdHos1982ramsey} and, parallel to the classical Tur\'an densities, the motivating questions in the area are determining the uniform Tur\'an densities of the tetrahedron $K_4^{(3)}$ and its 3-edge subgraph $K_4^-$. The latter was found to be $1/4$ by Glebov, Kr\'a\v{l}, and Volec~\cite{gkv16} and  later by Reiher, R\"odl, and Schacht~\cite{rrs18} with a different proof. In 2022, Buci\'c, Cooper, Kr{\'a}\v{l}, Mohr, and Munha Correia showed that for $\ell \geq 5$ and not divisible by 3, the uniform Tur\'an density of $\C_{\ell}^3$ is $\frac{4}{27}$~\cite{bckmm21}. 
	
	Another question that has attracted a lot of interest in the last few years is, what is the extremal number of tight cycles (the maximum number of edges in an $n$-vertex $r$-uniform hypergraph containing no tight cycles)? For $r=2$, the answer is of course $n-1$, but it turns out that the behaviour is rather different for $r\geq 3$. More specifically, after a series of results~\cite{hm19,janzer21,st22,letzter2021hypergraphs}, we know that the extremal number of tight $r$-uniform cycles lies between $\Omega\left(n^{r-1} \log n / \log \log n\right)$ and $O\left(n^{r-1} \log^5 n \right)$.
	
	\section{Proof overview} \label{sec:overview}
	For an $r$-uniform hypergraph $\HH$, the \emph{$t$-blow-up} of $\HH$, denoted $\HH[t]$, is defined to be the $r$-uniform hypergraph with vertex set $V(\HH)\times [t]$ and edges all $r$-tuples $\{(x_1, i_1), \dots, (x_r, i_r)\}$ with $\{x_1, \dots, x_r\}\in E(\HH)$. 
	The starting point of our proof is the following theorem, which asserts that the blow-up of a hypergraph $\HH$ has the same Tur\'an density as $\HH$. 
	\begin{theorem}[\cite{keevash11}, Theorem 2.2] \label{theorem_blow_up}
		Let $t$ be an integer and let $\HH$ be an $r$-uniform hypergraph. Then $\pi(\HH[t]) = \pi(\HH)$.
	\end{theorem}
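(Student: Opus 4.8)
The plan is to establish the two inequalities $\pi(\HH) \le \pi(\HH[t])$ and $\pi(\HH[t]) \le \pi(\HH)$ separately. The first is immediate: $\HH$ embeds into $\HH[t]$ (take one vertex from each blown-up class), so every $\HH[t]$-free $r$-graph is also $\HH$-free, whence $\ex(n, \HH) \le \ex(n, \HH[t])$ for every $n$, and therefore $\pi(\HH) \le \pi(\HH[t])$.

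The content lies in the reverse inequality, which I would prove by a supersaturation-plus-blow-up argument. Write $h = |V(\HH)|$ and $V(\HH) = \{v_1, \dots, v_h\}$, fix $\eps > 0$, and aim to show that $\ex(n, \HH[t]) \le (\pi(\HH) + \eps)\binom{n}{r}$ once $n$ is large; letting $\eps \to 0$ then gives the claim. The first ingredient is supersaturation: any $n$-vertex $r$-graph $G$ with $e(G) \ge (\pi(\HH) + \eps)\binom{n}{r}$ contains $\Omega(n^h)$ copies of $\HH$, and in particular $\Omega(n^h)$ edge-preserving injections $V(\HH) \to V(G)$. This is the usual averaging argument: the average edge density of $G$ over all $m$-vertex subsets equals $e(G)/\binom{n}{r} \ge \pi(\HH) + \eps$, so a proportion at least $\eps/2$ of these subsets have density at least $\pi(\HH) + \eps/2$; fixing $m$ large enough that $(\pi(\HH) + \eps/2)\binom{m}{r} > \ex(m, \HH)$, each such subset contains a copy of $\HH$, and counting these copies with multiplicity over the $m$-subsets yields $\Omega(n^h)$ of them.

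The second ingredient assembles these copies into one copy of $\HH[t]$. Take a uniformly random partition $V(G) = U_1 \cup \dots \cup U_h$ and call an edge-preserving injection $\phi \colon V(\HH) \to V(G)$ \emph{rainbow} if $\phi(v_i) \in U_i$ for each $i$. A fixed such injection is rainbow with probability $h^{-h}$, so some partition leaves $\Omega(n^h)$ rainbow injections; fix it, and let $\F$ be the $h$-uniform hypergraph on $V(G)$ whose edges are the images of these rainbow injections, noting that each edge of $\F$ meets every $U_i$ in exactly one vertex. As $\F$ has $\Omega(n^h)$ edges, its density is bounded away from $0$; and since the complete $h$-partite $h$-uniform hypergraph $K^{(h)}_{t, \dots, t}$ with parts of size $t$ is itself $h$-partite, it has Tur\'an density $0$ by the statement on $r$-partite hypergraphs quoted in the introduction. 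Hence for $n$ large $\F$ contains a copy of $K^{(h)}_{t,\dots,t}$; and since every edge of $\F$ is a transversal of $U_1, \dots, U_h$, this copy has one part inside each $U_i$ --- if some part had vertices in two distinct sets $U_i, U_{i'}$, replacing one by the other in a suitable edge of the copy would give an edge of $\F$ with two vertices in $U_{i'}$. Relabelling if necessary, we obtain pairwise disjoint sets $A_1, \dots, A_h$ with $A_i \subseteq U_i$ and $|A_i| = t$ such that every transversal $(a_1, \dots, a_h) \in A_1 \times \dots \times A_h$ is the image of a rainbow injection, i.e.\ $v_i \mapsto a_i$ is edge-preserving for every choice. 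Sending the blow-up vertex $(v_i, j)$ to the $j$-th element of $A_i$ is then an embedding of $\HH[t]$ into $G$, which is what we wanted.

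The essentially only external input is the fact, used in the last step, that $K^{(h)}_{t,\dots,t}$ has Tur\'an density $0$ --- the instance $r = h$ of the $r$-partite statement recalled in the introduction (the Erd\H{o}s box / K\H{o}v\'ari--S\'os--Tur\'an phenomenon for $h$-uniform hypergraphs). Given that, everything else is averaging, a union bound, and bookkeeping. The one point genuinely needing care is to keep track of copies of $\HH$ as \emph{labelled} embeddings throughout, so that the random colouring pins down the role of each vertex and the copy of $K^{(h)}_{t,\dots,t}$ found inside $\F$ really does assemble into a copy of $\HH[t]$ rather than an inconsistent configuration.
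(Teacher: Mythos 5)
The paper does not give its own proof of this statement --- it is quoted from Keevash's survey \cite{keevash11} --- and your argument is exactly the standard proof found there (supersaturation to obtain $\Omega(n^{|V(\HH)|})$ labelled copies of $\HH$, a random partition to make many of them rainbow, and the fact that $h$-partite $h$-uniform hypergraphs have Tur\'an density zero to extract a copy of $\HH[t]$), and it is correct. One small wording slip in the easy direction: since $\HH$ embeds into $\HH[t]$, the correct implication is that every $\HH$-free graph is $\HH[t]$-free (not that every $\HH[t]$-free graph is $\HH$-free, as written), and this is what gives $\ex(n,\HH) \le \ex(n,\HH[t])$.
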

	It shows that, rather than focusing on the Tur\'an density $\pi(\mathcal C_k^3)$ for an odd cycle $C_k^3$, we can instead work out the Tur\'an density of $\pi(\HH)$ for any hypergraph $\HH$ whose blow-up $\HH[t]$ contains $\mathcal C_k^3$ for some $t$. We refer to such hypergraphs $\HH$ as pseudocycles, and they can be equivalently defined as follows.
	\begin{definition} \label{def:pseudocycle}
		A \emph{pseudocycle} of length $\ell$ in a 3-uniform hypergraph $\HH$ is a sequence of (not necessarily distinct) vertices $v_1, \ldots, v_{\ell}$, such that for each $i \in [\ell]$, we have that $\{v_i, v_{i+1\!\pmod{\ell}}, v_{i+2\!\pmod{\ell}}\}$ is an edge of $\HH$.
		A \emph{pseudopath} of order $\ell$ is defined analogously.
	\end{definition}
	It is easy to show that for a hypergraph $\HH$, the properties ``$\HH[t]$ contains a $\mathcal C_k^3$ for some $t$'' and ``$\HH$ contains a length $k$ pseudocycle'' are equivalent.
	
	Thus, the starting point of our approach is, what is the maximum number of edges that a 3-uniform hypergraph can have without containing an odd pseudocycle? Later (after Corollary~\ref{corollary_intro_long_pseudocycles}), we will discuss how to forbid only \textit{short} pseudocycles. To understand our approach to this question, consider the analogous question about graphs --- what is the maximum number of edges in a (2-uniform) graph with no odd circuits? By Kotzig's Lemma, a graph has no odd circuit if, and only if, it is bipartite. Thus, the maximum number of edges in an $n$-vertex bipartite graph is $\floor{\frac{n^2}{4}}$.
	
	Our approach to the 3-uniform case is analogous to this. We first find the relevant generalisation of bipartite graphs, and then maximise the number of edges over this class of graphs. To define this generalisation, recall	 
	that a graph is bipartite if, and only if, it has a proper 2-vertex-colouring. In our context, we will be colouring the shadow of a 3-uniform hypergraph.   The \emph{shadow} of a hypergraph $\HH$, denoted $\partial \HH$, is the graph on vertices $V(\HH)$ whose edges are pairs $xy$ that are contained in an edge in $\HH$. 
	\begin{definition} \label{def:good-col}
		A \emph{good colouring} of a 3-uniform hypergraph $\HH$ is a colouring of its shadow, such that each edge $xy$ in the shadow is either coloured blue or coloured red and given an orientation, and every edge $e$ in $\HH$ can be written as $xyz$ where $xy$ and $xz$ are red and directed from $x$ and $yz$ is blue.
	\end{definition}
	The key first step of our proof is to show that the notion of ``good colouring'' is exactly equivalent to $\HH$ not containing an odd pseudocycle.
	\begin{restatable}{theorem}{thmGoodColouring}\label{thm:good-colouring}
		A 3-uniform hypergraph $\HH$ has a good colouring if, and only if, $\HH$ has no pseudocycle of length $\ell$ with $3 \nmid \ell$.
	\end{restatable}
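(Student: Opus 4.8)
The plan is to prove the two implications separately; the forward one is short, while the reverse — the analogue of ``no odd cycle $\Rightarrow$ bipartite'' — is the heart of the matter. \emph{From a good colouring to no ``odd'' pseudocycle:} in a good colouring every edge $e$ has a well‑defined \emph{apex}, the unique $x$ with $e=xyz$, $xy,xz$ red and directed from $x$, and $yz$ blue (unique since it is the vertex of $e$ missing from the unique blue pair of $e$). For a pseudocycle $v_1,\dots,v_\ell$, each triple $e_i:=\{v_i,v_{i+1},v_{i+2}\}$ is an edge and so has three distinct vertices, so I may write $\operatorname{apex}(e_i)=v_{i+t_i}$ with $t_i\in\{0,1,2\}$. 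The edges $e_i,e_{i+1}$ share the pair $\{v_{i+1},v_{i+2}\}$, whose colour and (if red) orientation is fixed; running through the three cases for this pair and recording what each forces for the apexes of $e_i$ and of $e_{i+1}$ gives in every case $t_{i+1}\equiv t_i-1\pmod 3$. Summing around the cycle yields $t_1\equiv t_1-\ell\pmod 3$, i.e.\ $3\mid\ell$, so no pseudocycle of length not divisible by $3$ exists.

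For the converse, call two edges \emph{linked} if they share two vertices. Distinct classes of the transitive closure of ``linked'' share no edge of the shadow and no pseudocycle crosses between them, so it is enough to colour one such \emph{tight component} at a time; fix a reference edge $e_0$ in it. The basic operation is \emph{transport}: if edges $e,f$ are linked through a common pair $p$ and an apex has been chosen for $e$, then, so that $e$ and $f$ give $p$ the same status (``blue'', or ``red from endpoint $a$''), the apex of $f$ is forced; viewed as a map from the vertices of $e$ to those of $f$, transport fixes $p$ pointwise and sends $e\setminus p\mapsto f\setminus p$, so it is a bijection, and reversible. Transporting along a link‑walk from $e_0$ back to $e_0$ thus gives a permutation $\sigma_W$ of $e_0$, and these permutations form a subgroup $\Pi\le\operatorname{Sym}(e_0)\cong S_3$.

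\emph{Key Lemma: $\Pi$ contains no $3$‑cycle.} Suppose $\sigma_W$ is a $3$‑cycle for a closed link‑walk $W=(e_0,e_1,\dots,e_m=e_0)$. I would realise $W$ as a genuine pseudocycle carrying the same transport: traverse the $e_j$ as the successive windows of a pseudo‑walk $Z$, beginning and ending with $e_0$, inserting between consecutive edges a ``rotation within $e_j$'' (appending one or two vertices while keeping the current window equal to $e_j$) to move the next shared pair into the last two positions; such rotations leave the apex in place, so the transport of $Z$ equals $\sigma_W$. Now $Z$ opens and closes with the edge $e_0$, the closing copy written in some order $\pi$ relative to the opening one; the forward‑direction computation, run along $Z$, expresses the transport of $Z$ as $\pi$ composed with a cyclic rotation of positions (determined by the length of $Z$ modulo $3$), so if this transport is a $3$‑cycle — an \emph{even} permutation — then $\pi$ is even, hence a rotation, not a transposition. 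After one further rotation within the last $e_0$ the final three vertices of $Z$ therefore repeat its first three in the same order, and deleting them turns $Z$ into a pseudocycle $C$ with $\sigma_C=\sigma_W$. But the transport around any pseudocycle of length $\ell$ is the translation of positions by $-\ell$ modulo $3$ (the forward computation, now around $C$), which is a $3$‑cycle precisely when $3\nmid\ell$; so $C$ has length not divisible by $3$, a contradiction.

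\emph{Conclusion.} A subgroup of $S_3$ containing no $3$‑cycle is trivial or generated by a transposition, hence has a common fixed vertex $x_0\in e_0$. Set $\alpha(e_0)=x_0$ and, for every edge $e$ of the component, let $\alpha(e)$ be the transport of $x_0$ along any link‑walk from $e_0$ to $e$; this is well defined because two such walks differ by a closed walk at $e_0$, whose transport fixes $x_0$. Finally colour a shadow pair $p$ (say $p\subseteq e$) blue if $\alpha(e)=e\setminus p$ and otherwise red, directed away from $\alpha(e)$: transport‑consistency makes this independent of the chosen $e$, and for each edge $e=\{x,y,z\}$ with $\alpha(e)=x$ the pairs $xy,xz$ are red from $x$ while $yz$ is blue, exactly the required form. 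Doing this in every tight component yields a good colouring of $\HH$. The one genuinely delicate point is inside the Key Lemma: realising a $3$‑cycle holonomy by an honest pseudocycle, i.e.\ ruling out the ``reflection'' obstruction (a transposition‑type mismatch of the closing window, which would block closing up the pseudo‑walk) on the grounds that it would correspond to a transposition, not a $3$‑cycle, in $\Pi$.
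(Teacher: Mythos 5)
Your argument is correct, and it takes a genuinely different route from the paper's. The forward direction is the same index computation the paper leaves implicit. For the converse, the paper fixes a base \emph{ordered} pair $xy$ and colours every shadow pair by propagating an index modulo $3$ along pseudopaths starting at $xy$; consistency is proved by concatenating two such pseudopaths into a pseudocycle, using the doubling trick $\tilde P$ (re-traversing a pseudopath backwards in $2k-2$ steps) to reverse paths, and handling the orientation-reversal case via a shortest pseudopath $P_0$ from $xy$ to $yx$, whose length fixes the phase $\sigma$. You instead work at the level of edges: apex transport along linked edges, a holonomy subgroup $\Pi \le \mathrm{Sym}(e_0) \cong S_3$ at a base edge, the key lemma that $\Pi$ contains no $3$-cycle, and then the common fixed vertex of $\Pi$ to define a global apex map and hence the colouring. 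The paper's $P_0$/$\sigma$ device — returning to the base pair with reversed orientation — appears in your framework simply as a transposition in $\Pi$, which is harmless since every subgroup of $S_3$ without a $3$-cycle has a fixed point; this is a clean unification, and it buys you a statement-free treatment of the reversal case. What the paper's approach buys is a shorter consistency argument: the $\tilde P$ trick reverses pseudopaths directly, so it never needs your walk-to-pseudocycle realisation with its parity analysis, which is indeed the delicate point of your proof and which you do carry out correctly (cyclic rotations within a window realise only even repositionings, so an even holonomy forces the closing window to be a rotation of the opening one, and the positional shift by the length modulo $3$ then yields a pseudocycle of length not divisible by $3$). One small point worth making explicit when writing this up: well-definedness of the pair colouring needs that for two edges $e,f$ sharing $p$, the walk to $f$ obtained by appending the direct link to a walk to $e$ gives $\alpha(f)=\tau(\alpha(e))$ — exactly the one-line argument you gesture at with ``transport-consistency''.
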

	\def \mcherry {m_{\mathrm{cherry}}}
	\def \mgood {m_{\mathrm{good}}}
	This theorem is proved in Section~\ref{sec:good-col}.
	Having established the above theorem, we next wish to maximise the  number of edges in a hypergraph with a good colouring. To this end, we define a \emph{coloured graph} to be a complete graph whose edges are either coloured blue or coloured red and oriented.
	A \emph{cherry} in a coloured graph $G$ is a triple $xyz$ such that $xy$ and $xz$ are red and directed from $x$ and $yz$ is blue. Denote by $c(G)$ the number of cherries in $G$. 
	Notice that if we have a good colouring of the shadow of $\HH$ (and the remaining vertex pairs can be coloured arbitrarily), then all edges of $\HH$ will be cherries in the resulting coloured graph. Thus, let $\mcherry(n)$ be the maximum number of cherries in an $n$-vertex coloured graph. The quantity $\mcherry(n)$ has been studied before  by Falgas-Ravry and Vaughan \cite{falgas2012turan}, who used flag algebras to show that $\lim_{n\to \infty} \mcherry/\binom n3=2\sqrt 3-3$.  Huang \cite{huang2014maximum} worked on the area further and determined the maximum number of ``induced out-stars'' of size $t$ in an $n$-vertex coloured graph. We remark that the afore-mentioned authors used an equivalent reformulation -- they studied the maximum number of ``induced out-stars" in an uncoloured directed graph, and there is a clear correspondence between a coloured graph (in our sense) and a directed graph. The following special case of their results is relevant for us. 
	\begin{theorem}[Falgas-Ravry--Vaughan~\cite{falgas2012turan}; Huang~\cite{huang2014maximum}]
	\label{theorem_intro_falgas_ravry}
		Every coloured graph on $n$ vertices contains at most $f(n)$ cherries. 
	\end{theorem}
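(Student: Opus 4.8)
The plan is to deduce this from Huang's theorem~\cite{huang2014maximum} rather than to prove it from scratch. First I would observe that a cherry is precisely an \emph{induced out-star with two leaves} in the coloured graph: a triple $\{x,y,z\}$ with $xy$ and $xz$ red and oriented out of $x$ and $yz$ blue. Huang~\cite{huang2014maximum} determined, for every fixed number of leaves, both the maximum number of induced out-stars in an $n$-vertex coloured graph and the extremal examples (which are iterated blow-ups); in the two-leaf case the maximum equals $\max\{e(\HH(x_1,\dots,x_t)) : x_t\le\dots\le x_1 = n\} = f(n)$, which is the present statement. (The weaker asymptotic form $f(n)/\binom n3 \to 2\sqrt 3 - 3$ had been obtained earlier by Falgas-Ravry and Vaughan~\cite{falgas2012turan} via flag algebras.)

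For orientation, here is how I would organise a self-contained proof. The matching lower bound is easy: on the vertex set of an iterated blow-up $\HH(x_1,\dots,x_t)$, colour a pair $uv$ blue when $u$ and $v$ lie in the same part $V_i\setminus V_{i+1}$ and red otherwise, oriented consistently by depth; then the cherries of this coloured graph are exactly the edges of $\HH(x_1,\dots,x_t)$, so some $n$-vertex coloured graph has $f(n)$ cherries. For the upper bound I would use a ``stability then cleaning'' strategy: (i) a Lagrangian (equivalently flag-algebra) computation showing that any coloured graph whose cherry count is within $o(n^3)$ of the maximum agrees with an iterated blow-up on all but $o(n^2)$ pairs; and then (ii) a local-adjustment argument showing that a cherry-extremal coloured graph which is not itself an iterated blow-up contains a vertex whose link --- its partition of the remaining vertices into red-in, red-out and blue neighbours --- deviates from the blow-up pattern, and that re-wiring that vertex to an optimal link strictly increases the number of cherries, a contradiction; hence the extremiser is an iterated blow-up and its cherry count is at most $f(n)$ by definition.

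The step I expect to be the main obstacle is (ii), the passage from the asymptotic to the exact bound. A naive induction on $n$ that deletes a vertex of minimum cherry-degree is just barely too lossy: in a near-extremal coloured graph the average number of cherries through a vertex is asymptotic to $f(n)-f(n-1)$, so deletion recovers the bound only up to lower-order terms. Closing this gap genuinely requires the stability information from step~(i) together with a careful case analysis of the admissible re-wiring moves, and since Huang carried out exactly this analysis, we simply invoke his result.
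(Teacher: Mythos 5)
Your reduction to Huang's theorem is legitimate --- the statement is precisely the two-leaf case of his result, and the paper itself attributes it to Falgas-Ravry--Vaughan and Huang --- but the paper does not leave it at a citation, and its own proof takes a much lighter route than your self-contained sketch. The paper runs the symmetrisation procedure $S_G(x)$ of \Cref{subsec:symmproc}: repeatedly merge a vertex sending blue edges to the current blue clone-clique, choosing whichever of the two replacement options does not decrease the cherry count (\Cref{claim:nhoods}); once $V(G)$ is partitioned into full blue clone-cliques $Q_1,\dots,Q_t$ and all red edges are pointed towards the larger clique, the cherry count is exactly of the form $\sum_{i<j}\binom{q_i}{2}q_j \le f(n)$ by \eqref{eqn:f}. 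In particular the exact bound falls out directly, so the ``asymptotic-to-exact'' obstacle you single out as the main difficulty (and propose to close via flag-algebra stability plus local rewiring) does not arise; the paper's logical order is in fact the reverse of yours, since the exact theorem is proved by symmetrisation and the quantitative estimate in \Cref{claim:nhoods} is then recycled to obtain the stability results (\Cref{lem:ind-step}, \Cref{thm:stability}), rather than stability being an input to the exact bound. Your step (ii) is only a sketch, so as a stand-alone argument it would still need genuine work, whereas the symmetrisation proof is short and elementary --- and, as the paper notes, it is essentially Huang's own method, so citing him and proving the theorem this way are closer than your sketch suggests. One small correction to your (unneeded for this statement) lower-bound construction: the red edges must be oriented from the deeper part towards the shallower one, so that the vertex of an edge lying in $V_{i+1}$ is the apex; with the opposite orientation the cherries do not coincide with the edges of $\HH(x_1,\dots,x_t)$.
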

	Combining Theorem~\ref{thm:good-colouring} and \Cref{theorem_intro_falgas_ravry} already yields the following weakening of Theorem~\ref{thm:single-cycle}.
	\begin{restatable}{corollary}{corLongPseudocylces} \label{corollary_intro_long_pseudocycles}
		If $\HH$ is a $3$-uniform hypergraph on $n$ vertices which  does not contain a pseudocycle of length $\ell$ for any $\ell$ with $3 \nmid \ell$, then $e(\HH) \le f(n)$.
	\end{restatable}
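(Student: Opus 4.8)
The plan is to combine the two main ingredients stated above, Theorem \ref{thm:good-colouring} and Theorem \ref{theorem_intro_falgas_ravry}, essentially directly. Suppose $\HH$ is a $3$-uniform hypergraph on $n$ vertices that contains no pseudocycle of order $\ell$ with $3 \nmid \ell$. First I would apply Theorem \ref{thm:good-colouring}: since $\HH$ has no such pseudocycle, it admits a good colouring of its shadow $\partial\HH$. By Definition \ref{def:good-col}, every vertex pair of $\partial\HH$ is coloured blue, or coloured red and oriented, and every edge $e \in E(\HH)$ can be written as $xyz$ with $xy, xz$ red and directed away from $x$, and $yz$ blue.

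Next I would extend this to a coloured graph on $V(\HH)$ in the sense defined before Theorem \ref{theorem_intro_falgas_ravry}: take the complete graph $K_n$ on $V(\HH)$, keep the colours and orientations assigned by the good colouring on pairs of $\partial\HH$, and colour every remaining pair (one not in the shadow) arbitrarily, say blue. Call the resulting coloured graph $G$. The point, already noted in the paragraph preceding Theorem \ref{theorem_intro_falgas_ravry}, is that each edge $e = xyz \in E(\HH)$ is a cherry of $G$: by the defining property of a good colouring, $xy$ and $xz$ are red and directed from $x$ and $yz$ is blue, which is exactly the definition of a cherry centred at $x$. Moreover distinct edges of $\HH$ give distinct cherries, since a cherry determines its underlying vertex triple. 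Hence $e(\HH) \le c(G)$.

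Finally, $G$ is a coloured graph on $n$ vertices, so by Theorem \ref{theorem_intro_falgas_ravry} we have $c(G) \le f(n)$. Combining the two inequalities gives $e(\HH) \le c(G) \le f(n)$, as claimed. I do not expect any real obstacle here — the content is entirely in Theorem \ref{thm:good-colouring} (proved in Section \ref{sec:good-col}) and in the Falgas-Ravry--Vaughan/Huang bound; the only small point to be careful about is that the map from edges of $\HH$ to cherries of $G$ is injective, which is immediate because a cherry $xyz$ has a well-defined underlying unordered triple $\{x,y,z\}$, so no two distinct hyperedges can map to the same cherry.
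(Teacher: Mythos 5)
Your proposal is correct and is exactly the argument the paper intends: extend the good colouring from Theorem~\ref{thm:good-colouring} arbitrarily to a coloured graph on $V(\HH)$, observe that every edge of $\HH$ becomes a cherry (as noted in the paragraph before Theorem~\ref{theorem_intro_falgas_ravry}), and apply the bound $c(G)\le f(n)$. Nothing is missing; the injectivity remark, while fine to include, is immediate since cherries are vertex triples.
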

	Notice that in Theorem~\ref{thm:single-cycle} we forbid odd pseudocycles of a single length, whereas in Corollary~\ref{corollary_intro_long_pseudocycles} odd pseudocycles of \emph{all} lengths are forbidden. 
	Thus, the next goal is to prove a version of the above corollary which holds when forbidding \emph{short} odd pseudocycles, yielding a finite family of forbidden hypergraphs. This is done by controlling the diameter of the hypergraph $\HH$. 
	
	\begin{definition}
		The \emph{diameter} of a hypergraph $\HH$ is the minimum $\ell$ such that the following holds: for every $x, y, z, w \in V(\HH)$ (where $x, y$ are distinct and $z, w$ are distinct) whenever there is a pseudopath from $xy$ to $zw$, there is such a pseudopath of order at most $\ell$.
	\end{definition} 
	In Section~\ref{sec:diameter}, we show that, for $\ell\gg \epsilon^{-1}$, every 3-uniform hypergraph $\HH$ contains a subhypergraph $\HH'$ with $e(\HH')\geq e(\HH)-\epsilon n^3$ such that $\HH'$ has  diameter at most $\ell$ (see Proposition~\ref{prop:small-diameter}). Then we show that in every 3-uniform hypergraph of diameter $\ell$, if there is some odd pseudocycle, then there is also an odd pseudocycle of length at most $4\ell$ (see Proposition~\ref{prop:diam-cyc}). Combining these with Corollary~\ref{corollary_intro_long_pseudocycles} shows the following 
	\begin{restatable}{corollary}{corWeak} \label{cor:weak-main}
		Let $1/n \ll 1/\ell \ll \epsilon \ll 1$, and let $\HH$ be an $n$-vertex hypergraph with no odd pseudocycles of length at most $\ell$. Then $e(\HH) \le f(n) + \epsilon n^3$.
	\end{restatable}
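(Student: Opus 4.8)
The plan is to derive \Cref{cor:weak-main} as a short consequence of \Cref{prop:small-diameter}, \Cref{prop:diam-cyc}, and \Cref{corollary_intro_long_pseudocycles}; essentially all of the work sits in the two diameter propositions (proved in \Cref{sec:diameter}), and the corollary is just their combination together with a small amount of parameter bookkeeping.

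First I would apply \Cref{prop:small-diameter} to $\HH$ with diameter parameter $d := \floor{\ell/4}$. The hypothesis $1/\ell \ll \epsilon$ guarantees $d \gg \epsilon^{-1}$ (and $1/n \ll 1/\ell$ guarantees that $n$ is large relative to $d$), so the proposition yields a subhypergraph $\HH' \subseteq \HH$ with $e(\HH') \ge e(\HH) - \epsilon n^3$ and $\diam(\HH') \le d$; after adding isolated vertices if necessary we may assume $\HH'$ has vertex set $V(\HH)$, so it is an $n$-vertex hypergraph. Next I would argue that $\HH'$ contains no odd pseudocycle at all: if it did, then \Cref{prop:diam-cyc} (with the diameter bound $d$) would produce an odd pseudocycle in $\HH'$ of length at most $4d \le \ell$; but every pseudocycle of $\HH'$ is also a pseudocycle of $\HH$, contradicting the assumption that $\HH$ has no odd pseudocycle of length at most $\ell$. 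Hence $\HH'$ has no pseudocycle of order $m$ with $3 \nmid m$, so \Cref{corollary_intro_long_pseudocycles} applies and gives $e(\HH') \le f(n)$. Combining, $e(\HH) \le e(\HH') + \epsilon n^3 \le f(n) + \epsilon n^3$, as required.

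The only point to watch is the interplay of parameters: the argument simultaneously needs $4d \le \ell$ (so that the short pseudocycle produced by \Cref{prop:diam-cyc} is genuinely forbidden in $\HH$) and $d \gg \epsilon^{-1}$ (so that \Cref{prop:small-diameter} applies), and the hierarchy $1/n \ll 1/\ell \ll \epsilon \ll 1$ is exactly what leaves room for both. Choosing $d = \floor{\ell/4}$ makes this transparent, and no further calculation is needed.

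Consequently the corollary itself presents no real obstacle; the difficulty lives entirely upstream. I would expect \Cref{prop:small-diameter} --- extracting a bounded-diameter subhypergraph while discarding only $\epsilon n^3$ edges --- to be the genuinely hard input, most plausibly via an iterative cleaning or density-increment argument that repeatedly deletes vertex pairs (together with their incident edges) which can reach, or be reached from, too few other pairs by short pseudopaths, the parameter hierarchy ensuring that the process terminates after boundedly many rounds without removing too much. By comparison I would expect \Cref{prop:diam-cyc} to be more routine: starting from an arbitrary odd pseudocycle, one repeatedly uses the bounded-diameter property to replace long arcs by short pseudopaths while keeping the total length in the correct residue class modulo $3$, thereby shrinking it to length $O(\ell)$.
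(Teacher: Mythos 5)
Your proposal is correct and follows essentially the same route as the paper: pass to a subhypergraph of diameter at most $\ell/4$ via \Cref{prop:small-diameter}, use \Cref{prop:diam-cyc} to conclude it has no odd pseudocycles at all, and then bound its edges by $f(n)$ (the paper cites \Cref{thm:good-colouring} and \Cref{theorem_intro_falgas_ravry} directly, which is exactly \Cref{corollary_intro_long_pseudocycles} as you use it). The only cosmetic difference is that the paper argues by contradiction with parameter $\eps/2$, while you argue directly with $\eps$; both yield the stated bound.
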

	
	Note that this is still not strong enough to combine with Theorem~\ref{theorem_blow_up} to yield Theorem~\ref{thm:single-cycle}. The issue is that the length of the cycle $\ell$ depends on $\epsilon$ --- therefore, when combined with Theorem~\ref{theorem_blow_up}, we would only get that $\lim_{m\to \infty}\pi(\mathcal C_m^3)=2\sqrt 3-3$. To go further, we prove a ``stability version'' of Theorem~\ref{theorem_intro_falgas_ravry}. We show that if a coloured graph $D$ on $n$ vertices contains more than $f(n)-\epsilon n^3$ cherries, then $D$ must have a very constrained structure similar to the iterated blow-up construction (see Theorem~\ref{thm:stability} for the precise statement). Once we have this, we can obtain the following strengthening of Corollary~\ref{cor:weak-main}
	\begin{restatable}{theorem}{thmPseudocycle} \label{thm:pseudocycles}
		There exists $L > 0$ such that the following holds. If $\HH$ is a $3$-uniform hypergraph on $n$ vertices  which  does not contain a pseudocycle of length $\ell$ for any $\ell \leq L$ with $3 \nmid \ell$, then $e(\HH) \le f(n) + O(1)$.
	\end{restatable}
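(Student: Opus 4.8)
The plan is to re-run the pipeline behind Corollary~\ref{cor:weak-main}, substituting the stability theorem (Theorem~\ref{thm:stability}) for the bare estimate of Theorem~\ref{theorem_intro_falgas_ravry}, and then to clean up. I would assume $e(\HH) \ge f(n)$, since otherwise there is nothing to prove; fix a small absolute constant $\epsilon > 0$ and take $L$ large in terms of $\epsilon$. First, apply Proposition~\ref{prop:small-diameter} to get a subhypergraph $\HH_0 \subseteq \HH$ with $e(\HH_0) \ge e(\HH) - \epsilon n^3$ and diameter at most $\ell_0 = \ell_0(\epsilon)$. Since $\HH_0$ inherits from $\HH$ the absence of odd pseudocycles of length at most $L$, and $L \ge 4\ell_0$, Proposition~\ref{prop:diam-cyc} shows $\HH_0$ has no odd pseudocycle at all; hence by Theorem~\ref{thm:good-colouring} it has a good colouring. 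Extending this colouring arbitrarily to the remaining pairs yields a coloured graph $D$ on $V(\HH)$ in which every edge of $\HH_0$ is a cherry, so $c(D) \ge e(\HH_0) \ge f(n) - \epsilon n^3$.

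\textbf{Extracting the skeleton.} As $c(D)$ is within $\epsilon n^3$ of the maximum $f(n)$, Theorem~\ref{thm:stability} forces $D$ to be close to the coloured graph of an iterated blow-up: one obtains nested sets $V(\HH) = U_1 \supseteq U_2 \supseteq \dots \supseteq U_s$, with levels $L_i := U_i \setminus U_{i+1}$ of near-optimal sizes, such that all but $O(\epsilon)n^2$ pairs are coloured ``correctly'' --- blue inside a level, and red oriented from the deeper endpoint to the shallower one between levels. Let $\mathcal B := \HH(|U_1|, \dots, |U_s|)$ be the iterated blow-up on these classes, so $e(\mathcal B) \le f(n)$ by the definition of $f$. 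Transferring the structure back (using $c(D) \le f(n)$, $e(\HH_0) \ge f(n) - \epsilon n^3$, and $e(\HH) - e(\HH_0) \le \epsilon n^3$), I would conclude that $\HH$ and $\mathcal B$ differ in only $O(\epsilon)n^3$ edges; in particular the chosen level partition is forced to be near-optimal. Call an edge of $\HH$ \emph{bad} if it is not an edge of $\mathcal B$, i.e.\ if it does not consist of two vertices in a common level $L_i$ together with one vertex of $U_{i+1}$.

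\textbf{Paying for the bad edges.} Writing
\[
	e(\HH) \;=\; e(\mathcal B) \;+\; b \;-\; m ,
\]
with $b$ the number of bad edges and $m$ the number of edges of $\mathcal B$ missing from $\HH$, it then suffices to show $b \le m + O(1)$. The engine is that a bad edge closes into a short odd pseudocycle using correctly-placed edges of $\HH$: for instance, if $\{a,b,c\}$ is bad with $a,b,c$ all in one level $L_i$, then for every $d$ in a deeper level with $\{a,b,d\},\{b,c,d\},\{a,c,d\} \in \HH$ the sequence $(a,b,c,d)$ is a pseudocycle of length $4 \not\equiv 0 \pmod{3}$, contradicting the hypothesis; hence each vertex of $U_{i+1}$ must witness a missing edge on one of $ab, bc, ca$. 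After discarding the $O(1)$ vertices lying in the innermost levels (together an $O(1)$-sized set, since level sizes decay geometrically) and the $O(1)$ pairs of abnormally large codegree-defect, a short counting argument turns this into $b \le m + O(1)$. Bad edges of the remaining shapes --- two vertices in a level with the third strictly shallower, or three vertices in levels that are not compatibly nested --- would be ruled out by analogous, slightly longer pseudocycle gadgets. Combining, $e(\HH) \le e(\mathcal B) + O(1) \le f(n) + O(1)$.

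\textbf{Main obstacle.} I expect the delicate part to be this last step: upgrading mere proximity to the blow-up into the exact bound $f(n) + O(1)$. For each shape of misplaced edge one needs an explicit bounded-length odd pseudocycle built entirely from correctly-placed edges, which requires ``enough room'' in the neighbouring levels; this breaks down precisely for edges buried in the innermost levels, and it is the geometric decay of level sizes that pins their total contribution down to $O(1)$. One must also regularize the defect so that a single good completion vertex (such as the $d$ above) is guaranteed, and organise the counting so that the additive constant does not accumulate a factor per level of the implicit recursion --- that is what converts the $O(\epsilon n^3)$ ``defects'' of the stability theorem into a genuine $O(1)$.
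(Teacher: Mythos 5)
Your front end (passing to a bounded-diameter subhypergraph via \Cref{prop:small-diameter}, excluding all odd pseudocycles via \Cref{prop:diam-cyc}, extracting a good colouring via \Cref{thm:good-colouring}, and invoking the stability theorem \Cref{thm:stability}) matches the paper, but the step you yourself flag as delicate --- ``$b \le m + O(1)$'' against the full iterated blow-up $\mathcal{B}$ --- is precisely where the theorem lives, and the mechanism you sketch does not deliver it. First, the charging loses a constant factor: for a bad triple $abc \subseteq L_i$, each $d \in U_{i+1}$ indeed witnesses a missing $\mathcal{B}$-edge among $abd, acd, bcd$, but a fixed missing edge $xyd$ is blamed by up to $|L_i|$ bad triples $xyc$; since $|L_i|/|U_{i+1}| \approx \beta/(1-\beta) \approx 1.73$, double counting only yields $b = O(m)$, and since $m$ may be of order $\eps n^3$, the bound $e(\HH) \le e(\mathcal{B}) + O(m)$ is useless. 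Second, the partition handed to you by \Cref{thm:stability} is only accurate up to $\eps_2 n^2$ edge-changes, and once $|Q_i \cup \ldots \cup Q_t| = O(\eps_2 n)$ condition \ref{itm:stability-c4} imposes no structure at all; so the ``innermost levels'' form an essentially unstructured set of size up to $\eps_2 n$ (not $O(1)$), whose internal bad edges --- up to order $\eps_2^3 n^3$ of them --- cannot be discarded, and nothing forces $b - m$ to be small relative to this possibly misaligned partition. Third, you never exploit that one may take $\HH$ to be an edge-maximiser, which is the lever the paper pulls twice: \Cref{prop:min-deg} (minimum degree via vertex symmetrisation) needs maximality and underpins the link-structure statement \Cref{claim:structure}, and maximality again gives the reverse inequality $s \le t_1 + t_2$ in the proof of \Cref{thm:partition}, which combined with $t_1 \le 0.1 s$ and $t_2 \le 2s/3$ forces $s = t_1 = t_2 = 0$, i.e.\ an \emph{exact} statement. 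Approximate closeness to a blow-up plus local gadgets alone does not seem capable of producing exactness.

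The paper's actual route avoids your multi-level comparison entirely: it proves that an extremal $\HH$ (for ``no odd pseudocycle of length $\le \ell$'') has an exactly clean top level, namely a partition $\{A, B\}$ with every $AAB$ triple present and no $AAA$ or $ABB$ triples (\Cref{thm:partition}), and then deduces \Cref{thm:pseudocycles} in three lines: for $n \ge n_0$ the extremal function satisfies $g(n) \le \binom{a_n}{2}(n - a_n) + g(n - a_n)$ for some $a_n$, and iterating this recursion, with the base cases $n < n_0$ contributing $\binom{n_0}{3}$, gives $g(n) \le f(n) + O(1)$. So only the top level ever needs to be understood, and only for the maximiser; all deeper levels are absorbed by the recursion on $g$. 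To repair your argument you would need either to reduce to the extremal hypergraph and prove an exact top-level structure (essentially reproducing \Cref{thm:partition}, including the degree bounds \Cref{prop:min-deg,prop:max-deg} and \Cref{claim:structure}), or to find a genuinely new way to convert the $O(\eps n^3)$-type defects of \Cref{thm:stability} into an additive $O(1)$, which your current counting scheme does not do.
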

	This theorem easily combines with Theorem~\ref{theorem_blow_up} in order to give our main result, Theorem~\ref{thm:single-cycle} (see \Cref{sec:diameter}).

	\section{Finding a good colouring} \label{sec:good-col}
	Recall that a \emph{pseudocycle} of order $m$ (or \emph{$m$-pseudocycle}) is a sequence $v_1 \ldots v_m$ of not necessarily distinct vertices such that $v_i v_{i+1} v_{i+2}$ is an edge for $i \in [m]$ (indices taken mod $3$). A \emph{pseudopath} of order $m$ is defined analogously.
	A hypergraph is called \textit{tightly connected} if there is a pseudopath between any two edges. Given vertices $x, y, z, w$ (not necessarily distinct), a pseudopath from $xy$ to $zw$ (where $xy$ and $zw$ are ordered pairs) is a pseudopath whose first two vertices are $x$ and $y$ (in this order) and the last two vertices are $z$ and $w$. 
	The \emph{shadow} of a hypergraph $\HH$, denoted $\partial \HH$, is the graph on vertices $V(\HH)$ whose edges are pairs $xy$ that are contained in an edge in $\HH$.
	
	Recall that a \emph{good colouring} of a hypergraph $\HH$ is a colouring of its shadow, such that each edge $xy$ in the shadow is either coloured blue or coloured red and given an orientation, and every edge $e$ in $\HH$ can be written as $xyz$ where $xy$ and $xz$ are red and directed from $x$ and $yz$ is blue. Such an edge is called a \emph{cherry}, and the  vertex $x$ is called its \emph{apex}. 
	\begin{figure}[h!]
		\centering
		\includegraphics[scale = 1]{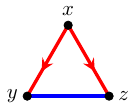}
		\vspace{-.3cm}
		\caption{A cherry $xyz$ with apex $x$}
	\end{figure}
	
	In this section, we will prove \Cref{thm:good-colouring}, restated here. 
	\thmGoodColouring*
	It is easy to see that a hypergraph with a good colouring has no pseudocycles of length $\ell$ with $3 \nmid \ell$, so the main effort will be put into proving the ``if'' direction.
	Namely, we need to show that every hypergraph with no odd pseudocycles has a good colouring. Before specifying such a colouring, let us give some intuition. Any proper path (that is, with no repetitions) $v_1 \ldots v_k$ has a good colouring, and this colouring is unique given the colour of $v_1 v_2$ (see \Cref{fig:path} for the three good colourings of a path of order 9, and notice that each such colouring colours each edge in the shadow differently). 
	\begin{figure}[h!]
		\centering
		\includegraphics[scale = .9]{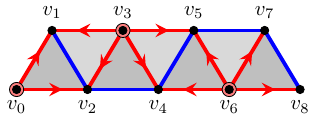}
		\hspace{.4cm}
		\includegraphics[scale = .9]{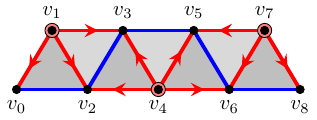}
		\hspace{.4cm}
		\includegraphics[scale = .9]{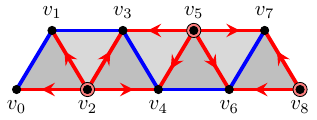}
		\caption{The three good colourings of a path of order $9$}
		\label{fig:path}
	\end{figure}
	
	A proper cycle has a good colouring if and only if it is tripartite (i.e.~the number of vertices is divisible by 3). See \Cref{fig:cycle} for a good colouring of a cycle of length 18 and notice that every third vertex is an apex.
	\begin{figure}[h!]
		\centering
		\includegraphics[scale = 1]{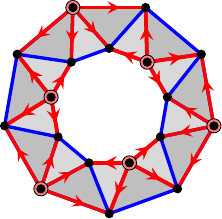}
		\caption{A good colouring of a cycle whose length is divisible by $3$}
		\label{fig:cycle}
	\end{figure}
	
	Moreover, if there is a path $P = xy \ldots yx$, then the order of $P$ uniquely determines the colour of $xy$. This fact will be used to construct a good colouring of our hypergraph $\HH$ -- we will start from a specific pair $xy$ and extend the colouring uniquely along pseudopaths. The difficulty is to show that this colouring is well defined, so the actual colouring definition will involve some more formalism.
	For a pseudopath $P = v_1 \ldots v_k$, define $\tilde{P}$ by
	\begin{equation} \label{eqn:tilde}
		\tP := v_{k-1} v_k v_{k-2} v_{k-1} v_{k-3} \dots v_4 v_2 v_3 v_1 v_2; 
	\end{equation} 
	note that $\tP$ is a pseudopath from $v_{k-1}v_k$ to $v_1 v_2$ of order $2k-2$ (because every vertex but $v_1$ and $v_k$ appears twice).

	\begin{proof}[Proof of \Cref{thm:good-colouring}]
		
		Whenever we talk about a path or cycle in this proof, we mean a pseudopath or pseudocycle. 
		
		As we said above, it is easy to show that a hypergraph with a good colouring has no odd cycles, so it suffices to show that if $\HH$ has no odd cycles then $\HH$ has a good colouring.
		Note that we may assume that $\HH$ is tightly connected, by adding edges if necessary.
		
		Let $P_0$ be a shortest path with the property that its first two vertices are the same as the last two but in reversed order, if such a path exists. Write $P_0 := v_0 \ldots v_k$ and denote $x := v_0 = v_k$ and $y := v_1 = v_{k-1}$.
		
		Define $\sigma \in \{0, 1, 2\}$ by
		\begin{equation} \label{eq:sigma-def}
			\sigma \modthree{2k}.
		\end{equation}
		
		Intuitively, $\sigma$ is defined so that if $P_0$ has a good colouring, then the apexes in this colouring are at index $\sigma \pmod{3}$.
		If $P_0$ does not exist, define $\sigma = 2$.
		
		Let $\{z,w\}$ be an edge in the shadow of $\HH$, and let $P = xy v_2 v_3 \ldots v_k$ be a path from $xy$ whose last three vertices contain $z$ and $w$. Let $i_w \in \{k-2,k-1, k\}$ be the index of $w$ (namely, $v_{i_w} = w$), and define $i_z$ analogously, and note that $i_w \neq i_z$. Define the index
		$$\eta(P, \{z,w\}) = \begin{cases}
			w, & i_w \modthree{\sigma}, \\
			z,  &i_z \modthree{\sigma},\\
			*, &\text{otherwise}.
		\end{cases}$$
		
		In particular, this defines $\eta(xy, \{x, y\})$.  %it's x if sigma =0, y if sigma = 1, * otherwise.
		We claim that $\eta(P, \{z,w\})$ is independent of the choice of the path $P$.
		
		\begin{claim} \label{claim:consistent}
			Let $z, w \in V(\HH)$ be distinct.
			Let $P = v_0 \ldots v_p$ and $Q = u_0 \ldots u_q$ be two paths %of length \nka{at most $\ell$},
			starting at $xy$ such that $z$ and $w$ are among their last three vertices. Then
			$$\eta(P, \{z, w\}) = \eta(Q, \{z, w\}).$$
		\end{claim}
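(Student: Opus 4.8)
The plan is to reduce the claim to a statement about lengths of pseudopaths modulo $3$, and to extract that statement from the hypothesis that $\HH$ has no pseudocycle of length $\not\equiv 0 \pmod 3$ via the reversal operation $\tilde{\,\cdot\,}$.

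\emph{Step 1 (a length--consistency lemma).} If $R_1, R_2$ are pseudopaths that both start at $xy$ and end at the same ordered pair $(a,b)$, then $|R_1| \equiv |R_2| \pmod 3$. Indeed, $\tilde R_2$ runs from $(a,b)$ back to $xy$, so $R_1 \cdot \tilde R_2$ — concatenated along the common pair $(a,b)$ — is a pseudopath from $xy$ to $xy$, and since its first and last pairs coincide it closes up into a pseudocycle. All triples of this pseudocycle are triples of $R_1$ or of $\tilde R_2$, hence edges of $\HH$, so it is a genuine pseudocycle; a short calculation gives that its length is $\equiv |R_1| + 2|R_2| \pmod 3$, whence $|R_1| + 2|R_2| \equiv 0$ and so $|R_1| \equiv |R_2| \pmod 3$. (Applied to pseudopaths from $xy$ to $(y,x)$, this also shows that the length of such a pseudopath is determined mod $3$, which is the content behind the definition of $\sigma$.) Write $\rho(a,b) \in \mathbb Z/3\mathbb Z$ for the common residue of $|R|$ over pseudopaths $R$ from $xy$ to $(a,b)$, when one exists.

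\emph{Step 2 (reduction).} First note that rotating within the last hyperedge does not change $\eta$: passing from $P = v_0\dots v_p$ to the pseudopath $v_0\dots v_p v_{p-2}v_{p-1}$ (valid since $\{v_{p-2},v_{p-1},v_p\}\in E(\HH)$) shifts the index of $v_{p-2}$ by $3$ and fixes all other indices, so all indices — hence $\eta(P,\{z,w\})$ for $\{z,w\}$ inside the last hyperedge — are unchanged mod $3$. Iterating, and using that repeated rotations cycle the terminal pair through the three pairs of one cyclic orientation of the last hyperedge, we may replace $P$ by a pseudopath $P^{*}$ whose last two vertices are $z$ and $w$, in whichever order the orientation of $P$'s last hyperedge dictates; similarly for $Q^{*}$. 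This preserves the value of $\eta$, and by the definition of $\eta$ the value $\eta(P^{*},\{z,w\})$ now depends only on $|P^{*}| \bmod 3$ and on whether the terminal pair of $P^{*}$ is $(z,w)$ or $(w,z)$, and likewise for $Q^{*}$.

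\emph{Step 3 (conclusion, and the main obstacle).} If $P^{*}$ and $Q^{*}$ have the same terminal pair, then Step 1 gives $|P^{*}| \equiv |Q^{*}| \pmod 3$ and hence $\eta(P^{*},\{z,w\}) = \eta(Q^{*},\{z,w\})$. The remaining case — where $P^{*}$ terminates at $(z,w)$ and $Q^{*}$ at $(w,z)$, so that both orderings of $\{z,w\}$ are reachable from $xy$ — is the crux, and the step I expect to be hardest. Here $\tilde{P^{*}}\cdot Q^{*}$ is a pseudopath from $(z,w)$ to $(w,z)$, and the goal is to pin down $|P^{*}| + |Q^{*}| \bmod 3$; a direct check shows that $\eta(P^{*},\{z,w\}) = \eta(Q^{*},\{z,w\})$ is equivalent to $|P^{*}| + |Q^{*}| + \sigma \equiv 0 \pmod 3$. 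To obtain this one must bring in $P_0$: the naive concatenations of $P^{*}, Q^{*}$ and their reversals only ever close up to pseudocycles of length $\equiv 0 \pmod 3$ (because $\tilde{\,\cdot\,}$ preserves orientations), so the argument has to route a closed pseudo-walk through a shortest pseudopath witnessing $P_0$, after which the no-odd-pseudocycle hypothesis forces a congruence between $|P^{*}|$, $|Q^{*}|$ and $k$; unwinding $\sigma \equiv 2k \pmod 3$ then yields exactly the required relation. The delicate points are (a) showing that in this case $P_0$ in fact exists (so that $\sigma$ is not the default value $2$), and (b) assembling the closed pseudo-walk so that $P_0$ contributes non-trivially — this is where the bulk of the work lies.
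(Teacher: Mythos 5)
Your Steps 1 and 2 are correct and are essentially the paper's own reductions: the length-consistency lemma of Step 1 is exactly the paper's treatment of the case where both paths end with the same ordered pair (glue one path to the tilde-reversal of the other to get a pseudocycle of length $\equiv |R_1|+2|R_2| \pmod 3$), and your rotation trick plays the role of the paper's modification of $P$ so that it ends with $zw$ or $wz$. Your computation that, in the remaining case, the claim is equivalent to $|P^{*}|+|Q^{*}|+\sigma\equiv 0 \pmod 3$ is also correct. The problem is that you stop there: the case where $P^{*}$ ends with $zw$ and $Q^{*}$ ends with $wz$ is the only case in which $\sigma$ (hence $P_0$) enters at all, and for it you offer a plan rather than a proof, explicitly leaving open both of your ``delicate points''. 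That is a genuine gap, since this case carries the entire content of the claim.

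Both missing steps are short, explicit constructions in the paper. For (a): the plain reversal of $Q^{*}$ (reverse the vertex sequence; this is again a pseudopath since edges are unordered) runs from $zw$ to $yx$, so gluing it to the end of $P^{*}$ along the common pair $zw$ gives a pseudopath from $xy$ to $yx$; hence $P_0$ exists and $\sigma$ is not the default value. For (b): the closed pseudo-walk is the cyclic concatenation of the interior of $P^{*}$, the pair $zw$, the reversed interior of $Q^{*}$, and $\tilde{P_0}$ (which runs from $yx$ to $xy$ and has order $2k$); every cyclically consecutive triple lies in $P^{*}$, in the reversal of $Q^{*}$, or in $\tilde{P_0}$, so this is a pseudocycle, and its length is $\equiv |P^{*}|+|Q^{*}|+2k \equiv |P^{*}|+|Q^{*}|+\sigma \pmod 3$; divisibility by $3$ then yields exactly your target congruence. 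Without this construction your argument does not close; with it, it coincides with the paper's proof. (A side remark: your diagnosis that naive concatenations ``only ever close up to pseudocycles of length $\equiv 0 \pmod 3$'' is not quite the issue --- the informative concatenation, $P^{*}$ followed by the reversal of $Q^{*}$, does not close up at all, since it ends at $yx$ rather than $xy$; that is precisely why $\tilde{P_0}$ must be inserted.)
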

		\begin{proof}
			Let $i_z \in \{p-2, p-1, p\}$ such that $v_{i_z} = z$, and define $j_z$ similarly with respect to $Q$.
			It suffices to prove that $i_z \modthree{\sigma}$ if and only if $j_z \modthree{\sigma}$. Indeed, this implies the same equivalence for $w$, thus showing that $\eta(P, \{z, w\})  = *$ if and only if $ \eta(Q, \{z, w\})= *$.
			
			First, we modify $P$ so as to assume that $P$ ends with $zw$ or $wz$. If this is not the case, then up to swapping $z$ and $w$ we have that $P$ ends with either $zw*$ or $z*w$. In the former case remove the last vertex of $P$, and in the latter case append $z$ to $P$. It is easy to see that the statement of the claim holds for the original $P$ if and only if it holds for the modified path. Similarly, we may assume that $Q$ ends with $zw$ or $wz$.
			
			Assume first that $P$ and $Q$ both end with $zw$. Then $\tilde{Q}$ (defined as in \eqref{eqn:tilde}) is a path from $zw$ to $xy$ of order $2(q+1)-2 = 2q$. Hence $v_2 v_3 \ldots v_{p-2} \tilde{Q}$ is a cycle, and by assumption its order is divisible by 3. That is, $p-3+2q \modthree{0}$, and thus $p \modthree{q}$. Since $i_z = p-1$ and $j_z = q-1$, this proves \Cref{claim:consistent}. The same argument holds when $P$ and $Q$ both end with $wz$.
			
			Secondly, assume that $P$ is a path from $xy$ to $zw$ and $Q$ is a path from $xy$ to $wz$. Note that this case only arises if $P_0$ is defined, as $v_0 v_1 \ldots v_{p-2} zw u_{q-2}\dots u_0$ is a path from $xy$ to $yx$. Then consider the cycle $v_2 \ldots v_{p-2} zw u_{q-2} \ldots u_2 \tilde{P_0}$. This is indeed a cycle because $u_1 u_0 = yx$, $v_0 v_1 = xy$ and $\tilde{P_0}$ is a path from $yx$ to $xy$. The order of this cycle is $p-3+2+q-3+2k \modthree{p+q+2+\sigma}$, using~\eqref{eq:sigma-def}. Now substitute $p = i_z+1$ and $q = j_z$. We have $i_z + j_z +\sigma \modthree{0}$, so $i_z \modthree{\sigma}$ if and only if $j_z \modthree{\sigma}$.
		\end{proof}
		
		Note that for every edge $\{z,w\}$ in the shadow of $\HH$ there is a path $P$ starting at $xy$ whose last three vertices contain $z$ and $w$. Indeed, as $\HH$ is tightly connected, there is a path $Q$ such that $x$ and $y$ are among its first three vertices and $z$ and $w$ among its last three vertices. Using a modification as in the proof of \Cref{claim:consistent} we may assume that $Q$ starts with $xy$ or $yx$. If it starts with $xy$ we are done, and otherwise the reverse of the path $\tQ$ satisfies the requirements.
		
		Given an edge $\{z,w\}$ in the shadow of $\HH$, define $\eta(zw) = \eta(P, \{zw\})$, where $P$ is any path from $xy$ whose last three vertices contain $z$ and $w$ (which exists by the previous paragraph). This parameter is well defined by \Cref{claim:consistent}. Now define $\chi$ as follows: let $zw$ be blue if $\eta(zw) = *$, and let it be red and oriented away from $\eta(zw)$ otherwise.
		
		Finally, we show that $\chi$ is a good colouring. To see this, consider an edge $uvw$ of $G$. 
		Let $P$ be a path from $xy$ whose last three vertices are $u, v, w$ (in some order); such a path exists by the paragraph above. Write $P:=xy v_2 v_3 \ldots v_{p-2}v_{p-1}v_p$, let $i \in \{p-2, p-1, p \}$ with $i \modthree{\sigma}$, and we may assume that $v_i =u$. Then $\eta(uv)=\eta(uw) =u$ and $\eta(vw) = *$, which implies that $uvw$ is a cherry with apex $u$.	
	\end{proof}
	
	\begin{remark}
		Our proof actually shows that if $G$ does not contain a path $P_0$ starting and ending with $xy$ and $yx$ respectively, then the graph is tripartite.

		Notice that a good colouring of $\HH$ can be extended from the shadow of $\HH$ to $K_n$ with no restrictions. Thus, in what follows it will suffice to analyse colourings of complete graphs by blue edges and red oriented edges (we will call such graphs \emph{coloured graphs}).
	\end{remark}
	
	\section{Maximising the number of cherries} \label{sec:optimal}
	The results of the previous section establish a connection between maximising the number of edges in an odd-pseudocycle-free hypergraph and maximising the number of \textit{cherries} in colourings of $K_n$ (formally defined below). It will turn out that both problems have the same extremal construction which yields the maximum $f(n)$. Recall that we have defined $f(n)$ as the maximum number of edges in a hypergraph $\HH(x_1, \ldots, x_k)$ with $\sum_i{x_i} = n$. An explicit expression for $f$ is
	\begin{equation} \label{eqn:f}
		f(n) = \max_{k \ge 1} \max_{\substack{x_1, \ldots, x_k \ge 1 : \\  x_1 + \ldots + x_k = n}} \left\{\sum_{1 \le i < j \le k} \binom{x_i}{2} \cdot x_j\right\}.
	\end{equation}
	Equivalently, we have the recursive characterisation
	\begin{align} \label{eqn:k}
		\begin{split}
			& f(1) = 0,\\
			& f(n) = \max_{k \in [n-1]} \binom{k}{2} (n-k) + f(n-k) \,\,\text{ for $n \ge 2$}.
		\end{split}
	\end{align}
	Write 
	\begin{equation} \label{eq:alphabeta}
		\beta = \frac{3 - \sqrt{3}}{2} \approx 0.634 \qquad \text{and} \qquad
		\alpha = \frac{\beta(1 - \beta)}{2(3 - 3\beta + 3\beta^2)} = \frac{\sqrt{3}}{3} - \frac{1}{2} \approx 0.077.
	\end{equation}
	
	The following proposition will be proved in \Cref{subsec:calculus}.
	\begin{proposition} \label{prop:alpha}
		$f(n) = \alpha n^3 + o(n^3)$.
	\end{proposition}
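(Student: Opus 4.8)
The plan is to sandwich $f$ between $\alpha n^{3}$ and $\alpha n^{3}-O(n^{2})$, establishing each inequality by induction on $n$ through the recursion \eqref{eqn:k}. Both halves run on a single elementary identity: for all real $k,n$,
\begin{equation}\label{eq:f-cubic}
\frac{k^{2}(n-k)}{2}+\alpha(n-k)^{3}\;=\;\alpha n^{3}-\frac{(2\alpha+1)\,k}{2}\,(k-\beta n)^{2}.
\end{equation}
To verify \eqref{eq:f-cubic} one expands both sides; since the $k^{3}$ and $n^{3}$ coefficients agree automatically, the identity reduces to the two numerical facts $6\alpha+1=2\beta(2\alpha+1)$ and $6\alpha=\beta^{2}(2\alpha+1)$ (comparison of the $k^{2}n$ and $kn^{2}$ coefficients), each a one-line check from \eqref{eq:alphabeta}; equivalently, these amount to $2\beta^{2}-6\beta+3=0$ (so that $3-3\beta+\beta^{2}=\tfrac32$) together with $\alpha=\tfrac{\beta(1-\beta)}{3}$. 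Because $2\alpha+1>0$, \eqref{eq:f-cubic} immediately shows that its left-hand side never exceeds $\alpha n^{3}$ for $k\in[0,n]$, with equality exactly at $k=\beta n$; this is precisely why $\beta$ is the optimal splitting ratio.

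For the upper bound, $f(n)\le\alpha n^{3}$, I induct on $n$, the case $n=1$ being $f(1)=0\le\alpha$. For $n\ge2$, apply $\binom k2\le\tfrac{k^{2}}{2}$ and the inductive hypothesis $f(n-k)\le\alpha(n-k)^{3}$ inside the maximum in \eqref{eqn:k}:
\[
f(n)=\max_{k\in[n-1]}\Bigl(\tbinom k2(n-k)+f(n-k)\Bigr)\le\max_{k\in[n-1]}\Bigl(\tfrac{k^{2}(n-k)}{2}+\alpha(n-k)^{3}\Bigr)\le\alpha n^{3},
\]
the last step being \eqref{eq:f-cubic} together with $k(2\alpha+1)(k-\beta n)^{2}\ge0$.

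For the lower bound I iterate \eqref{eqn:k} with the greedy choice $k=\lfloor\beta n\rfloor$. Put $n_{0}=n$ and $n_{i+1}=n_{i}-\lfloor\beta n_{i}\rfloor$, so $(1-\beta)n_{i}\le n_{i+1}\le(1-\beta)n_{i}+1$ and hence $(1-\beta)^{i}n\le n_{i}\le(1-\beta)^{i}n+\beta^{-1}$; let $T$ be minimal with $n_{T}=1$. For $i<T$ we have $n_{i}\ge2$, so $k_{i}:=\lfloor\beta n_{i}\rfloor\in[n_{i}-1]$ and \eqref{eqn:k} gives $f(n_{i})\ge\binom{k_{i}}{2}n_{i+1}+f(n_{i+1})$; telescoping down to $f(n_{T})=f(1)=0$ yields $f(n)\ge\sum_{i<T}\binom{k_{i}}{2}n_{i+1}$. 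Using $\binom{k_{i}}{2}\ge\tfrac{(k_{i}-1)^{2}}{2}$, $k_{i}>\beta n_{i}-1$, and $n_{i+1}\ge(1-\beta)n_{i}$, each summand is at least $\tfrac{\beta^{2}(1-\beta)}{2}n_{i}^{3}-O(n_{i}^{2})$, so
\[
f(n)\ge\frac{\beta^{2}(1-\beta)}{2}\sum_{i<T}n_{i}^{3}-O\!\Bigl(\sum_{i<T}n_{i}^{2}\Bigr)\ge\frac{\beta^{2}(1-\beta)}{2}\,n^{3}\sum_{i\ge0}(1-\beta)^{3i}-O(n^{2}),
\]
where the estimate of the error and of the truncation of the geometric series uses $n_{i}\le(1-\beta)^{i}n+\beta^{-1}$ (and $n_{T-1}\le(1-\beta)^{-1}$, which makes the truncation at $i=T$ cost only $O(1)$). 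Finally $\sum_{i\ge0}(1-\beta)^{3i}=\tfrac{1}{1-(1-\beta)^{3}}=\tfrac{2}{3\beta}$ because $3-3\beta+\beta^{2}=\tfrac32$, so the main term equals $\tfrac{\beta(1-\beta)}{3}n^{3}=\alpha n^{3}$, giving $f(n)\ge\alpha n^{3}-O(n^{2})$. Combining the two bounds gives $f(n)=\alpha n^{3}+o(n^{3})$.

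I do not foresee a genuine obstacle: essentially all the content sits in the identity \eqref{eq:f-cubic}, which simultaneously pins down the value $\alpha$ and certifies that $k=\beta n$ is the optimal split. The remainder is routine control of rounding and truncation errors — each of order $O(n^{2})$, hence harmless for an $o(n^{3})$ statement; the only point meriting a moment's care is that in the lower-bound recursion the quadratic error contracts by the factor $(1-\beta)^{2}<1$ at each step, so the accumulated error stays $O(n^{2})$ (which in fact gives the sharper $f(n)=\alpha n^{3}-O(n^{2})$).
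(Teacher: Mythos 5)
Your proof is correct and follows essentially the same route as the paper: an induction on the recursion \eqref{eqn:k} for the upper bound, where your explicit identity $\tfrac{k^2(n-k)}{2}+\alpha(n-k)^3=\alpha n^3-\tfrac{(2\alpha+1)k}{2}(k-\beta n)^2$ plays exactly the role of the paper's facts \eqref{eqn:g-var} and \eqref{eqn:g-beta} about the function $g$, and a lower bound coming from the iterated $\beta$-proportional split (your greedy $k_i=\lfloor\beta n_i\rfloor$ is the same construction as the paper's choice $x_i=\lfloor\beta(1-\beta)^i n\rfloor$ in \eqref{eqn:f}, just with the $O(n^2)$ error analysis written out rather than left to the reader). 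No gaps; your version even yields the slightly sharper two-sided bound $\alpha n^3-O(n^2)\le f(n)\le\alpha n^3$.
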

	
	We remark that the density of the corresponding hypergraph $\HH_n$ is $6\alpha = 2\sqrt{3}-3$, as already noted by  Mubayi and R\"odl~\cite{mubayi2002turan}.
	
	As in Section~\ref{sec:overview}, we call a graph $G$ \emph{coloured} if it is a complete graph whose edges are either coloured blue or coloured red and oriented.
	A \emph{cherry} in a coloured graph $G$ is a triple $xyz$ such that $xy$ and $xz$ are red and directed from $x$ and $yz$ is blue. Denote by $c(G)$ the number of cherries in $G$. Theorem~\ref{theorem_intro_falgas_ravry} states that $c(G) \leq f(n)$ for any $n$-vertex coloured graph $G$. Recall that this was originally proved by Falgas-Ravry and Vaughan \cite{falgas2012turan} (who used flag algebras and also proved a similar result for out-directed stars on four vertices) and by Huang \cite{huang2014maximum} (who used a symmetrisation argument, and proved a similar result for out-directed stars on $k$ vertices, for every $k \ge 3$). Nevertheless, we provide a proof, both for completeness and because we need most of the groundwork to prove a stability version of Theorem~\ref{theorem_intro_falgas_ravry}.
	
	As mentioned in the proof overview (\Cref{sec:overview}), \Cref{corollary_intro_long_pseudocycles}, which is a weak version of our main result and is restated here, follows directly from \Cref{theorem_intro_falgas_ravry} (proved in the next section) and \Cref{thm:good-colouring} (proved in the previous section).
	
	\corLongPseudocylces*

	\section{Stability with symmetrisation} \label{sec:symm}
	
	Most of the work in this section will go into proving the following lemma, providing a stability version of Theorem~\ref{theorem_intro_falgas_ravry}.
	It will then be iterated to prove a stability result about cherries in coloured graphs; recall that $\beta = (3 - \sqrt{3})/2$ (see \eqref{eq:alphabeta}).
	
	We point out that this stability result is somewhat similar to a general result due to Liu--Pikhurko--Sharifzadeh--Staden \cite{liu2020stability} which allows one to obtain stability versions of a class of extremal results that can be proved using a symmetrisation argument. However, while we indeed prove the extremal result in Theorem~\ref{theorem_intro_falgas_ravry} using a symmetrisation argument, the result in \cite{liu2020stability} does not apply to automatically convert it into a stability result.
	
	\begin{lemma} \label{lem:ind-step}
		Let $ 1/n \ll \eps \ll 1$ and let $G$ be a coloured graph on $n$ vertices satisfying $c(G) \ge f(n) - \eps^2 n^3$.
		Then there is a coloured graph $G'$ on $V(G)$ satisfying: $c(G') \ge c(G)$; the graphs $G$ and $G'$ differ on at most $800\eps^{1/2} n^2$ edges; moreover, there is a set $Q \subseteq V(G)$ satisfying $\big| |Q| - \beta n \big| \le 100\eps n$; $Q$ is a blue clique in $G'$; and all other edges in $G'$ that are incident with $Q$ are red and oriented towards $Q$. 
	\end{lemma}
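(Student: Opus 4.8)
The plan is to run the Zykov-type symmetrisation underlying \Cref{theorem_intro_falgas_ravry} on $G$, but in a \emph{budgeted} fashion: we only perform moves that raise the cherry count substantially, and bound their number using the hypothesis $c(G)\ge f(n)-\eps^2 n^3$. This yields a coloured graph close to $G$ with no fewer cherries in which blue-adjacent vertices behave almost identically; a stability analysis of the extremal problem then locates an almost-perfect first part, and a local clean-up makes it exact. First I record the one-step move. For a coloured graph $H$ and a blue edge $uv$, let $H^{v\to u}$ be obtained by deleting $v$ and inserting a vertex $v'$ agreeing with $u$ in all colours and orientations towards $V(H)\setminus\{u,v\}$, with the edge $uv'$ coloured blue; define $H^{u\to v}$ symmetrically. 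Write $c_u,c_v,c_{u,v}$ for the numbers of cherries of $H$ through $u$ but not $v$, through $v$ but not $u$, and through both, and $a,b$ for the numbers of vertices sending a red edge into $u$, resp.\ into $v$. A direct count (part of the groundwork for \Cref{theorem_intro_falgas_ravry}) gives
\begin{equation*}
c(H^{v\to u})-c(H)=(c_u-c_v)+(a-c_{u,v}), \qquad c(H^{u\to v})-c(H)=(c_v-c_u)+(b-c_{u,v}).
\end{equation*}
Since $uv$ is blue, any cherry through both $u$ and $v$ has $u,v$ as its two feet, so $c_{u,v}\le\min(a,b)$; hence $a-c_{u,v}$ and $b-c_{u,v}$ are nonnegative, and, writing $\operatorname{link}(x)$ for the number of cherries through $x$ (so that $\operatorname{link}(u)-\operatorname{link}(v)=c_u-c_v$), the move replacing the endpoint of smaller link by a copy of the endpoint of larger link does not decrease $c$, with surplus at least $|\operatorname{link}(u)-\operatorname{link}(v)|$.

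\textbf{Budgeted symmetrisation.}
Set $\delta=\eps^{3/2}$. While $G$ has a blue edge $uv$ with $|\operatorname{link}(u)-\operatorname{link}(v)|\ge\delta n^2$, replace $G$ by the non-decreasing one of $G^{v\to u},G^{u\to v}$; this raises $c$ by at least $\delta n^2$ and alters only edges at the replaced vertex, hence at most $n$ edges. As $c$ never exceeds $f(n)$ and starts at $\ge f(n)-\eps^2 n^3$, there are at most $\eps^2 n^3/(\delta n^2)=\eps^{1/2}n$ such moves, so the resulting graph $G_1$ satisfies $c(G_1)\ge c(G)$, differs from $G$ on at most $\eps^{1/2}n^2$ edges, and has the property that blue-adjacent vertices have link counts within $\delta n^2$ of each other. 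I would then feed this near-homogeneity into the proof of \Cref{theorem_intro_falgas_ravry}: collapsing $G_1$ onto near-homogeneous blue classes alters the cherry count by only $o(\eps n^3)$ and turns it into a weighted instance of the optimisation \eqref{eqn:f}, so the bound $c(G_1)\ge f(n)-\eps^2 n^3$ together with a stability analysis of the recursion \eqref{eqn:k} (its maximiser has first term $(\beta\pm O(\eps))n$, with $\beta$ as in \eqref{eq:alphabeta}) forces $G_1$ to be within $O(\eps^{1/2})n^2$ edges of an iterated blow-up whose first part $Q_0$ satisfies $\big| |Q_0|-\beta n \big|\le O(\eps)n$ and carries all but $O(\eps)n^3$ of the cherries.

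\textbf{Clean-up.}
A final local clean-up makes the structure on $Q_0$ exact: recolour every non-blue pair inside $Q_0$ blue, and reorient every edge leaving $Q_0$ so that it points into $Q_0$. Each such flip can be ordered so as not to decrease $c$ --- recolouring a pair inside $Q_0$ blue creates about $(1-\beta)n$ new cherries with that pair as base, and reorienting an outward edge inward creates about $|Q_0|$ new ones, while in both cases only $o(n)$ cherries are destroyed because $Q_0$ plays the role of the first part and hence has negligible red out-degrees (a handful of atypical vertices are removed from $Q_0$ beforehand, shifting its size by $o(n)$). Altogether this costs $O(\eps^{1/2})n^2$ flips; setting $Q$ to be the cleaned-up $Q_0$ and $G'$ to be the final graph then gives $c(G')\ge c(G)$, at most $800\eps^{1/2}n^2$ edges changed relative to $G$, $\big| |Q|-\beta n \big|\le 100\eps n$, $Q$ a blue clique, and all other edges incident with $Q$ red and oriented towards $Q$.

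\textbf{Main obstacle.}
The crux is the structural step in the second paragraph: converting the purely \emph{local} output of budgeted symmetrisation --- ``blue-adjacent vertices have nearly equal link'' --- into the \emph{global} conclusion that $G_1$ is $O(\eps^{1/2})n^2$-close to one specific iterated blow-up with first part of size $(\beta\pm O(\eps))n$. This requires a quantitative (defect) form of the argument behind \Cref{theorem_intro_falgas_ravry} married to a stability analysis of the one-variable optimisation \eqref{eqn:f}--\eqref{eqn:k}, and the main effort is in simultaneously keeping the number of altered edges within the stated budget and squeezing the error in $|Q|$ down to the linear scale $O(\eps)n$ demanded by the statement, rather than the weaker $O(\eps^{1/2})n$ produced directly by the symmetrisation bookkeeping.
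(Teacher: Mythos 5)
There is a genuine gap, and it is exactly the step you flag as the ``main obstacle'': nothing in your argument actually converts the local conclusion of the budgeted symmetrisation into the global structure. Worse, the quantity you budget is the wrong one. Your while-loop threshold is on the \emph{link difference} $|\operatorname{link}(u)-\operatorname{link}(v)|$ along blue edges, but the quantity that controls how much the graph changes when you ``collapse'' blue-adjacent vertices is the \emph{red in-neighbourhood symmetric difference} $|N^-(u)\,\triangle\,N^-(v)|$: by your own one-step formula the gain of the better clone move is roughly $|c_u-c_v|+\min(a,b)-c_{u,v}$, and after your loop terminates there can still be many blue edges whose endpoints have equal links but in-neighbourhoods differing on $\Theta(n)$ vertices (each such collapse gains at most $O(n)$ cherries, far below the $\eps^2 n^3$ budget, so near-extremality does not forbid them). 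Collapsing onto ``near-homogeneous blue classes'' can therefore change $\Theta(n^3)$ edge-slots' worth of structure and is not within the stated edge budget, and the claim that it ``alters the cherry count by only $o(\eps n^3)$'' is unjustified. The paper avoids this by running the full symmetrisation to exact full blue clone-cliques, with Claim 5.3 giving a cherry gain proportional to $|N^-(x_1)\,\triangle\,N^-(x_{k+1})|$ at each step; that is precisely what lets it bound, via the $\eps^2n^3$ cherry budget, the number of edges \emph{recoloured during the procedure} (Lemma 5.7) and hence relate the clique found after symmetrisation back to the original $G$.

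The later steps are also asserted rather than proved, and they are where the paper does real work: the size estimate $\bigl||Q|-\beta n\bigr|\le 100\eps n$ comes from a concrete calculus stability statement (Lemma 5.4 and Claim 5.6 about $g$), not a generic ``stability analysis of the recursion''; showing that almost all of $G[Q']$ is already blue and almost all $(Q',V\setminus Q')$ edges already point into $Q'$ requires Lemmas 5.8 and 5.7 applied at the right moment of the iterated procedure; and making $Q$ an exact blue clique with all outside edges red towards it \emph{without decreasing $c$ and without exceeding the edge budget} is done by restarting the symmetrisation with a judiciously ordered vertex sequence inside a cleaned subset $Q''\subseteq Q'$, so that the resulting clone-clique $Q$ differs from $Q'$ on $O(\delta n)$ vertices. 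Your clean-up paragraph (``each such flip can be ordered so as not to decrease $c$\,'', ``only $o(n)$ cherries are destroyed'') presupposes the very structure on and around $Q_0$ that has not been established. So the skeleton (symmetrise, locate a $\beta n$-sized part, clean up) matches the paper's, but the central quantitative bridge from near-extremality to structure is missing and the budgeted variant as stated does not supply it.
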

	
	The proof consists of two main parts: first we show that $G$ has a blue almost-clique on a vertex set $Q'$ of size roughly $\beta n$. Then we show that most $(V \setminus Q', Q')$ edges are red and point towards $Q'$. In both parts, we make use of a ``symmetrisation procedure'' which builds blue cliques without decreasing the number of cherries.
	
	A \emph{blue clone-clique} in a coloured graph $G$ is a set of vertices $Q$ such that $Q$ is a blue clique in $G$, and for any $v \notin Q$, either all edges between $v$ and $Q$ are blue, or they are all red and have the same orientation (namely, they all point towards $v$ or all point away from $v$). A \emph{full blue clone-clique} is a blue clone-clique $Q$ such that all $(V \setminus Q, Q)$ edges are red.
	
	The symmetrisation procedure, which will be described in detail in the next section, receives as input a vertex $x$ in a graph $G$, and produces a graph $G'$ on the same vertex set, which has at least as many cherries as $G$ and has a full blue clone-clique $Q$ in $G'$ that contains $x$.
	
	The symmetrisation procedure can be applied repeatedly to a coloured graph $G$ to find a coloured graph $G'$ with at least as many cherries as $G$, and whose vertices can be partitioned into full blue clone-cliques. Some calculus (detailed in \Cref{subsec:calculus}) will show that such a $G'$ contains a full blue clone-clique $Q'$ of size approximately $\beta n$. 
	
	To proceed we need two lemmas (\Cref{lem:Q-red,lem:Q-blue}; see \Cref{subsec:Q-lemmas}) that together tell us the following. Suppose that a symmetrisation procedure on $G$ resulted in a full blue clone-clique $Q$, of size approximately $\beta n$. Then (even before performing symmetrisation) almost all edges in $G[Q, V \setminus Q]$ are red and point towards $Q$,  and almost all edges in $G[Q]$ are blue.
	
	Applying these lemmas to the previously found blue clone-clique $Q'$, we conclude first that $G[Q']$ is almost fully blue. 
	We then show that there is a particular instance of the symmetrisation procedure that results in a graph $G'$ and full blue clone-clique $Q$ such that $Q$ and $Q'$ differ on only few vertices. Lemma~\ref{lem:Q-red} implies that almost all $G[Q', V \setminus Q']$ edges are red and point towards $Q'$. This essentially completes the proof. This part is detailed in \Cref{subsec:proof}.
	
	In \Cref{subsec:iteration}, we iterate \Cref{lem:ind-step} to prove the following result.
	\begin{theorem}\label{thm:stability}
		Let $1/n \ll \eps_1 \ll \eps_2 \ll 1$.
		Let $G$ be a coloured graph on $n$ vertices satisfying $c(G) \ge f(n) - \eps_1 n^3$.
		Then there exists a coloured graph $G'$ on the same vertex set, satisfying: 
		\begin{enumerate}[label = \rm(\alph*), ref = \rm (\alph*)]
			\item \label{itm:stability-a}
			$c(G') \ge c(G)$,
			\item \label{itm:stability-b}
			$G$ and $G'$ differ on at most $\eps_2 n^2$ edges,
			\item \label{itm:stability-c}
			the vertices of $G'$ can be partitioned into $Q_1, \ldots, Q_t$ such that:
			\begin{enumerate}[label = \rm(\roman*), ref = c\rm(\roman*)]
				\item \label{itm:stability-c1}
				$|Q_1| \ge \ldots \ge |Q_t|$, 
				\item \label{itm:stability-c2}
				all edges in $Q_i$ are blue, for $i \in [t]$, 
				\item \label{itm:stability-c3}
				all edges in $(Q_i, Q_j)$ are red and directed towards $Q_i$, for $1 \le i < j \le t$,
				\item \label{itm:stability-c4} 	$\big||Q_i| - \beta \cdot |Q_i \cup \ldots \cup Q_t| \big| \le \eps_2 n$ for $i \in [t]$.
				%	$\big||Q_i| - (1-\beta)^{i-1}\beta n\big| \le \eps_2 n$ for $i \in [t]$. \nka{I think we can get away with just $|Q_i|/ |Q_{>i}|$, which saves a bit on the proof.}
			\end{enumerate}
		\end{enumerate}
	\end{theorem}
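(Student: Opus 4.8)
The plan is to iterate \Cref{lem:ind-step}, peeling off one blue clique at a time until the remaining graph is too small to apply the lemma, and then to finish off the leftover by brute force. I would set $G_1 := G$, $V_1 := V(G)$, $n_1 := n$, and $n^* := (10^4\eps_1/\eps_2^2)^{1/3} n$. Having defined a coloured graph $G_i$ on a set $V_i$ of size $n_i$ satisfying the invariant $c(G_i) \ge f(n_i) - \eps_1 n^3$, if $n_i < n^*$ I would stop (and set $t := i$); otherwise I would set $\hat\eps_i := \eps_1^{1/2}(n/n_i)^{3/2}$, so that $\hat\eps_i^2 n_i^3 = \eps_1 n^3$ and the invariant says exactly $c(G_i) \ge f(n_i) - \hat\eps_i^2 n_i^3$, and apply \Cref{lem:ind-step} with parameter $\hat\eps_i$ to $G_i$. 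The hypothesis $1/n_i \ll \hat\eps_i \ll 1$ of the lemma follows from $n_i \ge n^*$ and a sufficiently generous hierarchy $1/n \ll \eps_1 \ll \eps_2 \ll 1$ (in particular one gets $\hat\eps_i \le \eps_2/100$). This produces a coloured graph $G_i'$ on $V_i$ with $c(G_i') \ge c(G_i)$, differing from $G_i$ on at most $800 \hat\eps_i^{1/2} n_i^2$ edges, together with a blue clique $Q_i \subseteq V_i$ with $\big||Q_i| - \beta n_i\big| \le 100\hat\eps_i n_i$ such that every edge between $Q_i$ and its complement is red and directed towards $Q_i$. I would then set $V_{i+1} := V_i \setminus Q_i$, $G_{i+1} := G_i'[V_{i+1}]$, $n_{i+1} := |V_{i+1}|$, and continue.

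The crucial point, which makes the iteration go through, is the following cherry decomposition: because $Q_i$ is a blue clique all of whose external edges are red and point towards it, no cherry of $G_i'$ can have its apex in $Q_i$, and none can have exactly one non-apex vertex in $Q_i$; hence every cherry of $G_i'$ either lies entirely in $V_{i+1}$ or has both non-apex vertices in $Q_i$ and apex outside $Q_i$, which gives
\[
c(G_i') = c(G_{i+1}) + (n_i - |Q_i|)\binom{|Q_i|}{2}.
\]
Combining this with $c(G_i') \ge c(G_i) \ge f(n_i) - \eps_1 n^3$ and with the recursion \eqref{eqn:k} (which gives $f(n_i) \ge \binom{|Q_i|}{2}(n_i-|Q_i|) + f(n_{i+1})$ upon taking $k = |Q_i|$), I would conclude $c(G_{i+1}) \ge f(n_{i+1}) - \eps_1 n^3$ --- that is, the invariant is preserved with the \emph{same absolute error} $\eps_1 n^3$. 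Since $|Q_i| \ge (\beta - \eps_2)n_i \ge \tfrac12 \beta n_i$, the sizes $n_i$ shrink geometrically, so the iteration stops after $t - 1 = O(\log(1/\eps_1))$ steps with a leftover $G_t$ on $n_t < n^*$ vertices, still satisfying $c(G_t) \ge f(n_t) - \eps_1 n^3$.

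To finish, I would use \Cref{theorem_intro_falgas_ravry} to get $c(G_t) \le f(n_t)$, and replace $G_t$ by the coloured graph $G_t^*$ on $V_t$ that is an iterated blow-up with part sizes an optimal (and, by the obvious rearrangement in \eqref{eqn:f}, non-increasing) solution for $n_t$; then $c(G_t^*) = f(n_t) \ge c(G_t)$, and its parts, listed by non-increasing size, are blue cliques with all cross-edges red and directed towards the larger part. Call these $Q_t, \dots, Q_T$. I would define $G'$ on $V$ by keeping, for each $i < t$, the colours of $G_i'$ on the edges inside $Q_i$ and between $Q_i$ and $V_{i+1}$, and using $G_t^*$ inside $V_t$. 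Then $Q_1, \dots, Q_T$ is a partition of $V$ into blue cliques with all cross-edges red and pointing towards the part of smaller index, establishing (c)(ii) and (c)(iii); (c)(i) holds because $|Q_i| \approx \beta n_i$ with $n_i$ decreasing, because $\beta > 1/2$ with room to spare for the slack $O(\eps_2 n_i)$ (so $|Q_{t-1}| \ge |Q_t|$), and because the $Q_j$ with $j \ge t$ are sorted; (c)(iv) holds since $Q_i \cup \dots \cup Q_T = V_i$ for $i < t$ (so the discrepancy is at most $100\hat\eps_i n_i \le \eps_2 n$) while for $i \ge t$ both $|Q_i|$ and $\beta|Q_i \cup \dots \cup Q_T|$ are at most $n_t < n^* \ll \eps_2 n$. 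Iterating the cherry decomposition gives $c(G') = \sum_{i < t}(n_i - |Q_i|)\binom{|Q_i|}{2} + c(G_t^*)$, whereas running it through the $G_i'$ gives $c(G) \le c(G_1') = \dots = \sum_{i<t}(n_i-|Q_i|)\binom{|Q_i|}{2} + c(G_t) \le c(G')$, which is (a); and $G$ and $G'$ differ only on edges altered by some application of \Cref{lem:ind-step} or by the final replacement, so on at most $\sum_{i<t}800\hat\eps_i^{1/2}n_i^2 + \binom{n_t}{2}$ edges, which is $O(\eps_1^{1/4}n^2) + O((\eps_1/\eps_2^2)^{2/3}n^2) \le \eps_2 n^2$ under the hierarchy, giving (b).

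I expect the main obstacle to be the bookkeeping of the errors across the iteration: one must notice that it is the \emph{absolute} error $\eps_1 n^3$ (rather than a density error) that is preserved at each step --- this is exactly what the recursion \eqref{eqn:k} for $f$ buys us --- so that the effective relative error $\hat\eps_i$ grows geometrically and the process must be halted once $n_i$ falls below $n^* \sim (\eps_1/\eps_2^2)^{1/3} n$, after which the leftover is small enough to be handled crudely. Everything else --- the cherry decomposition, the rearrangement for \eqref{eqn:f}, and checking that the accumulated slacks and edge-change counts stay within the $\eps_2$-budget --- should be routine once the hierarchy $1/n \ll \eps_1 \ll \eps_2 \ll 1$ is taken generous enough.
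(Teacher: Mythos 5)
Your proposal is correct and follows essentially the same route as the paper: iterate \Cref{lem:ind-step}, use the recursion \eqref{eqn:k} for $f$ to show the \emph{absolute} error $\eps_1 n^3$ is preserved when a clique is peeled off (so the rescaled parameter grows and the process must stop once the remainder is small), and then handle the tiny leftover within the $\eps_2$-budget. The only difference is cosmetic: the paper finishes the leftover by running the symmetrisation procedure until it decomposes into full blue clone-cliques, whereas you simply overwrite it with an optimal extremal configuration achieving $f(n_t)$ cherries --- both are fine since the leftover has $o(\eps_2 n)$ vertices.
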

	
	\def \Nm {N^-}
	\def \Np {N^+}
	In a coloured graph $G$, let $\Nm_G(x)$ be the red in-neighbourhood of $x$ and let $\Np_G(x)$ be the red out-neighbourhood of $x$ (we sometimes omit the subscript $G$).

	\subsection{The symmetrisation procedure} \label{subsec:symmproc}
	Given $x \in V(G)$, the \emph{symmetrisation procedure} $S_G(x)$ (or $S(x)$ in short) builds a blue clone-clique containing $x$; see \Cref{fig:symmetrisation} for a detailed description. The result of the procedure depends on the choice of $x_{k+1}$ in step \ref{step:4}, but we suppress this dependence in the notation $S_G(x)$.
	
	\begin{figure}[h!t!]
		\begin{framed}
			\begin{minipage}{.98\textwidth}
				\begin{center}
					{\bf The symmetrisation procedure $S_G(x)$}
				\end{center}
				
				{\bf Input:} a coloured graph $G$ on vertex set $V$ and a vertex $x \in V$.
				
				\vspace{.2cm}
				{\bf Output:} a graph $G'$ and a full blue clone-clique $Q$ in $G'$ containing $x$.
				
				\vspace{.2cm}
				{\bf The process:} the algorithm builds sequences $x_1, \ldots, x_t$ and $y_1, \ldots, y_t$ of vertices in $V$ and $G_1, \ldots, G_t$ of graphs on $V$, for some $t$, as follows.
				\begin{enumerate}
					\item 
					Set $x_1, y_1 := x$ and $G_1 := G$.
					\item \label{step:2}
					Suppose $x_1, \ldots, x_k$ and $G_1, \ldots, G_k$ are given and $\{x_1, \ldots, x_k\}$ is a blue clone-clique in $G_k$. 
					\item \label{step:3}
					If there are no vertices in $V \setminus \{x_1, \ldots, x_k\}$ whose edges to $\{x_1, \ldots, x_k\}$ are all blue, put $Q := \{x_1, \ldots, x_k\}$ and $G' := G_k$, and return $G'$ and $Q$.
					\item \label{step:4}
					Otherwise, let $x_{k+1}$ be a vertex in $V \setminus \{x_1, \ldots, x_k\}$ which sends blue edges to $\{x_1, \ldots, x_k\}$ ($x_{k+1}$ can be chosen arbitrarily or judiciously).
					\item
					For $y \in V$, let $G_{k+1}(y)$ be the graph obtained by replacing $\{x_1, \ldots, x_{k+1}\}$ by $k+1$ copies of $y$ and letting the new vertices form a blue clique.
					
					If $c(G_{k+1}(x_{k+1})) - c(G_k) \ge k \cdot (c(G_{k+1}(x_1)) - c(G_k))$, set $y_{k+1} = x_{k+1}$, and otherwise let $y_{k+1} = x_1$.
					Define $G_{k+1} := G_{k+1}(y_{k+1})$, and return to step \ref{step:2}, considering the sequences $x_1, \ldots, x_{k+1}$ and $G_1, \ldots, G_{k+1}$.
				\end{enumerate}
			\end{minipage}
		\end{framed}\vspace{-0.5cm}
		\caption{Description of the symmetrisation process $S_G(x)$}
		\label{fig:symmetrisation}
	\end{figure}
	
	We now show that the procedure $S_G(x)$ does not decrease the number of cherries. In fact, we prove a stronger quantitative claim.
	
	\begin{claim}  \label{claim:nhoods}
		
		Let $x_1, \ldots, x_t$, $y_1, \ldots, y_t$ and $G_1, \ldots, G_t$ be sequences produced by $S_G(x)$, let $k \in [t-1]$, and use $\Nm(u)$ as a shorthand for $\Nm_{G_k}(u)$. Then, one of the following holds.
		\begin{enumerate}[label = \rm(\roman*)]
			\item \label{itm:nhoods1}
			$y_{k+1} = x_1$ and $c(G_{k+1})-c(G_k) \geq \frac{k+1}{4} \cdot \big| \Nm(x_1) \,\triangle\, \Nm(x_{k+1})\big|$,
			\item \label{itm:nhoodsk}
			$y_{k+1} = x_{k+1}$ and $c(G_{k+1})-c(G_k) \geq \frac{k(k+1)}{4} \cdot \big| \Nm(x_1) \, \triangle \, \Nm(x_{k+1})\big|$.
		\end{enumerate}
		In particular, $c(G_{k+1}) \ge c(G_k)$.
	\end{claim}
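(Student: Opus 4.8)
The plan is to compute directly how the number of cherries changes when we pass from $G_k$ to $G_{k+1}$, by comparing both candidate graphs $G_{k+1}(x_1)$ and $G_{k+1}(x_{k+1})$ with $G_k$. Write $S = \{x_1, \ldots, x_k\}$, which is a blue clone-clique in $G_k$, and let $x_{k+1}$ be the newly added vertex, which sends blue edges to all of $S$. The key observation is that cherries in a coloured graph are a purely \emph{local} object: a cherry $abc$ (apex $a$) depends only on the colours/orientations of $ab$, $ac$, $bc$. So the difference $c(G_{k+1}(y)) - c(G_k)$, for $y \in \{x_1, x_{k+1}\}$, splits into contributions from cherries that use at least one vertex of $S \cup \{x_{k+1}\}$, and since inside $S \cup \{x_{k+1}\}$ everything is a blue clique in both graphs, the only cherries that change are those with exactly one vertex in $S\cup\{x_{k+1}\}$ (as apex, using one red out-edge into the rest and one red out-edge... wait, actually a cherry with apex in $S$ needs two red out-edges, so its two leaves are outside) or with exactly one leaf in $S \cup \{x_{k+1}\}$. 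I would set up the bookkeeping so that, after replacing $S \cup \{x_{k+1}\}$ by $k+1$ copies of $y$, each copy has red in-neighbourhood $\Nm_{G_k}(y)$ and red out-neighbourhood $\Np_{G_k}(y)$ among $V \setminus (S\cup\{x_{k+1}\})$, and count cherries accordingly. The upshot should be a clean formula: $c(G_{k+1}(y)) - c(G_k)$ equals $(k+1)$ times a ``per-copy gain'' term plus $\binom{k+1}{2}$ times a ``within-clique'' term, where the within-clique term is the same for $y = x_1$ and $y = x_{k+1}$ (both give a blue clique), so only the per-copy gain differs, and that per-copy gain is a linear functional of the pair $(\Nm_{G_k}(y), \Np_{G_k}(y))$.

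Next I would exploit the choice rule in step~\ref{step:4} of the procedure. By definition, $y_{k+1} = x_{k+1}$ precisely when $c(G_{k+1}(x_{k+1})) - c(G_k) \ge k\big(c(G_{k+1}(x_1)) - c(G_k)\big)$, and $y_{k+1} = x_1$ otherwise; in either case $G_{k+1} = G_{k+1}(y_{k+1})$. So in case~\ref{itm:nhoodsk} we get $c(G_{k+1}) - c(G_k) \ge k\big(c(G_{k+1}(x_1)) - c(G_k)\big)$, and it remains to show $c(G_{k+1}(x_1)) - c(G_k) \ge \frac{k+1}{4}|\Nm(x_1) \triangle \Nm(x_{k+1})|$, i.e.\ exactly the bound in case~\ref{itm:nhoods1}; and in case~\ref{itm:nhoods1} we get $c(G_{k+1}) - c(G_k) = c(G_{k+1}(x_1)) - c(G_k)$ directly. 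Hence \textbf{both cases reduce to the single inequality}
\[
  c(G_{k+1}(x_1)) - c(G_k) \ \ge\ \tfrac{k+1}{4}\,\big|\Nm(x_1) \,\triangle\, \Nm(x_{k+1})\big|,
\]
and for case~\ref{itm:nhoodsk} we additionally need to verify that $c(G_{k+1}(x_1)) - c(G_k) \ge 0$ is enough to license the inequality $c(G_{k+1}) \ge k \cdot (\text{that}) \ge \frac{k(k+1)}{4}|\cdots|$. (If $c(G_{k+1}(x_1)) - c(G_k) < 0$ we would be in case~\ref{itm:nhoods1} anyway, since then $x_1$ is not chosen over... one has to be a little careful: if both differences are negative the rule still picks $x_{k+1}$ when its difference is at least $k$ times the more-negative one — this needs a remark, but then case~\ref{itm:nhoods1}'s bound shows $c(G_{k+1}(x_1)) - c(G_k) \ge 0$, contradiction, so in fact whenever we are in case~\ref{itm:nhoodsk} the quantity $c(G_{k+1}(x_1)) - c(G_k)$ is nonnegative.) The final clause ``$c(G_{k+1}) \ge c(G_k)$'' then follows immediately from either bound since symmetric differences are nonnegative.

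So the crux is the displayed inequality, which compares $G_k$ with the graph obtained by replacing $S \cup \{x_{k+1}\}$ with $k+1$ clones of $x_1$. Here I would argue that cloning $x_1$ only \emph{changes} cherries involving $x_{k+1}$ (the $k$ vertices of $S$ are already clones of $x_1$, so replacing them by more clones of $x_1$ is neutral). Concretely, $G_{k+1}(x_1)$ is obtained from $G_k$ by deleting $x_{k+1}$ and adding one new clone of $x_1$; the net effect on cherry count is: we lose all cherries through $x_{k+1}$ and gain all cherries through the new clone, which behaves like $x_1$. A short calculation should show this net change is at least $\frac{k+1}{4}|\Nm(x_1)\triangle\Nm(x_{k+1})|$ — morally because each vertex $u$ in the symmetric difference of the red in-neighbourhoods creates, together with the $k+1$ clones and the blueness of the clique, a positive discrepancy of order $k+1$ in the cherry count, and the factor $\frac14$ absorbs the fact that we are only lower-bounding one of several nonnegative contributions (and that a given $u$ contributes via cherries with $u$ as apex and two clones as leaves, of which there are $\binom{k+1}{2}$, but we only claimed a linear-in-$k$ bound, so we have slack). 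The main obstacle is getting this local cherry-counting bookkeeping exactly right — in particular identifying which cherries change, handling the cases where a vertex of the symmetric difference lies in a blue-neighbourhood versus a red-out-neighbourhood of one of $x_1, x_{k+1}$, and confirming the constant $\frac14$ is safe; everything else is algebraic manipulation and the case analysis dictated by the choice rule.
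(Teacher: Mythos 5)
There is a genuine gap: your reduction of both cases to the single unconditional inequality $\Delta_1 := c(G_{k+1}(x_1)) - c(G_k) \ge \frac{k+1}{4}\,\lvert N^-(x_1)\,\triangle\,N^-(x_{k+1})\rvert$ is false, and no bookkeeping will rescue it. The quantity $\Delta_1$ can be negative: replacing $x_{k+1}$ by a clone of $x_1$ destroys every cherry whose apex lies in $N^-(x_{k+1})\setminus N^-(x_1)$ and uses $x_{k+1}$ as a leaf, and if $x_{k+1}$ has a much richer red in-neighbourhood than $x_1$ these losses can dominate any gain (e.g.\ $k=1$, $N^-(x_1)=\emptyset$, $N^-(x_2)$ large, with a common blue neighbour completing many cherries through $x_2$). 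In that situation the procedure rightly picks $y_{k+1}=x_{k+1}$ and conclusion (ii) holds because $\Delta_{k+1}$ is large, but your chain ``(ii) follows from $\Delta_{k+1}\ge k\Delta_1$ together with the bound on $\Delta_1$'' collapses, since the bound on $\Delta_1$ simply does not hold there (and $k\Delta_1$ is then a useless, possibly negative, lower bound). In other words, the per-vertex contribution of $u\in N^-(x_1)\triangle N^-(x_{k+1})$ to $\Delta_1$ is not of one sign, contrary to the ``moral'' argument at the end of your proposal.

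The correct mechanism, which is what the paper does, is to control the \emph{weighted sum} rather than either term separately: a direct computation of $\Delta_j = c(G_{k+1}(x_j))-c(G_k)$ and summation over $j\in[k+1]$, using that $x_1,\dots,x_k$ are already clones, yields the exact identity
\begin{equation*}
k\Delta_1 + \Delta_{k+1} \;=\; \frac{k(k+1)}{2}\,\bigl\lvert N^-(x_1)\,\triangle\,N^-(x_{k+1})\bigr\rvert .
\end{equation*}
The selection rule in step 4 chooses precisely the larger of $\Delta_{k+1}$ and $k\Delta_1$, and the larger of two numbers is at least half their sum; this gives (ii) when $y_{k+1}=x_{k+1}$ and, after dividing by $k$, gives (i) when $y_{k+1}=x_1$. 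So the conditional structure of the claim is essential: the bound on $\Delta_1$ in (i) is only valid \emph{because} in that case $k\Delta_1>\Delta_{k+1}$, and it cannot be promoted to an unconditional statement as your proposal requires. Your local-counting setup and use of the choice rule are in the right spirit, but without the identity for $k\Delta_1+\Delta_{k+1}$ (or some equivalent cancellation between the two candidate replacements) the argument does not go through.
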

	
	\begin{proof}
		Let $c(x_i)$ denote the number of cherries in $G_k$ containing $x_i$ and no other vertices in $x_1, \ldots, x_k$. 
		Recall that for $y \in \{x_1, \ldots, x_{k+1}\}$ the graph $G_{k+1}(y)$ is obtained from $G_k$ by replacing $x_1, \ldots, x_{k+1}$ by copies of $y$ that form a blue clique. Write $\Delta_j := c(G_{k+1}(x_j)) - c(G_k)$.
		We claim that
		\begin{equation*}
			\Delta_j = 
			(k+1)c(x_j) - \sum_{i \in [k+1]} c(x_i) + \binom{k+1}{2}\big|N^-(x_j)\big| - \frac 12  \sum_{i_1 \neq i_2} \big|N^-(x_{i_1}) \cap N^-(x_{i_2})\big|.
		\end{equation*}
        Indeed, the first two terms account for the triples with only one vertex in $\{x_1, \ldots, x_k\}$. For the second two terms, notice that a triple $(x_{i_1}, x_{i_2}, v)$ with $1 \leq i_1 < i_2 \leq k+1$ is a cherry in $G_{k+1}(x_j)$ if and only if  $v  \in N^-(x_j) $, and it is a cherry in $G_k$ if and only if $v \in N^-(x_{i_1}) \cap N^-(x_{i_2})$.
		Summing over $j \in [k+1]$, we obtain
		\begin{align*}
			\sum_j \Delta_j 
			& = \binom{k+1}{2}\sum_j \big|N^-(x_j)\big| - \frac{k+1}{2}\sum_{i \neq j}\big|N^-(x_i) \cap N^-(x_j)\big| \\
			& = \frac{k+1}{2}\sum_{i \neq j} \big|N^-(x_j) \setminus N^-(x_i)\big|.
		\end{align*}
		
		In particular, since $\{x_1, \dots, x_k\}$ is a blue clone-clique,
		\begin{align*}
			k \Delta_1 + \Delta_{k+1} 
			& = \frac{k(k+1)}{2} \cdot \left( \big|N^-(x_{k+1}) \setminus N^-(x_1)\big| + \big|N^-(x_{1}) \setminus N^-(x_{k+1})\big|\right) \\
			& = \frac{k(k+1)}{2} \cdot \big|\Nm(x_1) \,\triangle\, \Nm(x_{k+1})\big|.
		\end{align*}
		
		Now, $\max(k \Delta_1, \Delta_{k+1})$ is at least one half of the RHS. Thus, if $y_{k+1} = x_{k+1}$ then $\Delta_{k+1} \ge k \Delta_1$ and so $\Delta_{k+1} = \max(k \Delta_1, \Delta_{k+1}) \ge \frac{k(k+1)}{4} \cdot \big|\Nm(x_1) \,\triangle\, \Nm(x_{k+1})\big|$, and if $y_1 = x_1$ then $k \Delta_1 > \Delta_{k+1}$ and so 
		$\Delta_1 = \max(k\Delta_1, \Delta_{k+1}) \geq \frac{k+1}{4} \cdot |\Nm(x_1) \,\triangle\, \Nm(x_{k+1})\big|$.
	\end{proof}
	
	\Cref{theorem_intro_falgas_ravry} follows easily from the above claim.
	\def \Gf {G_{\final}}
	\begin{proof}[Proof of Theorem~\ref{theorem_intro_falgas_ravry}]
		Let $G$ be a coloured graph on $n$ vertices. Run the following process: starting with $G' = G$, as long as there is a vertex $x$ which is not in a full blue clone-clique in $G'$, run $S_{G'}(x)$ and replace $G'$ by the resulting graph. Let $\Gf$ be the graph $G'$ at the end of the process (notice that the process will indeed end, because $S_{G'}(x)$ keeps full blue clone-cliques intact). Then $c(\Gf) \ge c(G)$ by \Cref{claim:nhoods}, and the vertices of $\Gf$ can be partitioned into full blue clone-cliques $Q_1, \ldots, Q_t$; for convenience suppose that $|Q_1| \ge \ldots \ge |Q_t|$. Replace $\Gf$ by the graph $\Gf'$ obtained by directing the red edges between $Q_i$ and $Q_j$ towards $Q_i$, for $1 \le i < j \le t$. It is straightforward to verify that $c(\Gf') \ge c(\Gf)$, as the number of cherries in $Q_i \cup Q_j$ is larger when the arcs in $(Q_i, Q_j)$ point towards the larger clique. Finally, denoting $q_i := |Q_i|$, observe that $c(\Gf') = \sum_{i \le j} \binom{q_i}{2}q_j \le f(n)$ (see \eqref{eqn:f}). Thus $c(G) \le f(n)$, as claimed.
	\end{proof}

	\subsection{Optimising the clique size} \label{subsec:calculus}
	
	Before proceeding to analyse the symmetrisation procedure, we prove the following lemma regarding the structure of a graph whose vertices are partitioned into full blue clone-cliques, mostly using calculus; recall that $\alpha$ and $\beta$ are defined in \eqref{eq:alphabeta}.
	
	\begin{lemma} \label{lem:theta}
		Let $1/n \ll \eps \ll 1$.
		Let $G$ be a coloured graph on $n$ vertices whose vertices can be partitioned into full blue clone-cliques, and suppose that $c(G) \ge f(n) - \eps^2 n^3$. Then $G$ has a full blue clone-clique $Q$ satisfying $\big| |Q| - \beta n \big| \le 100 \eps n$.
	\end{lemma}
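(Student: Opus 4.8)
The idea is to bound $c(G)$ from above purely in terms of the size of the largest part, compare with the hypothesised lower bound, and then read off from an exact algebraic identity that the largest part must have size close to $\beta n$; a degenerate regime, where all parts are tiny, is ruled out separately by a crude count.

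First I would set up the upper bound. Relabel the full blue clone-cliques as $Q_1,\dots,Q_t$ with $q_1:=|Q_1|\ge\dots\ge|Q_t|=:q_t$, and write $m_{i+1}:=q_{i+1}+\dots+q_t$. Every cherry has its two ``leaves'' inside a single part (the one spanned by its blue edge) and its apex in a different part, so each cherry lies on a unique pair $\{Q_i,Q_j\}$; and for a fixed pair with $i<j$, the uniform orientation of the edges between $Q_i$ and $Q_j$ makes the number of cherries on this pair equal to $\binom{q_i}{2}q_j$ or $\binom{q_j}{2}q_i$, the former being at least the latter. Summing over pairs, $c(G)\le\sum_{i<j}\binom{q_i}{2}q_j=\sum_i\binom{q_i}{2}m_{i+1}$. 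Isolating the largest part, using that $(q_2,\dots,q_t)$ is an admissible composition of $n-q_1$ in \eqref{eqn:f}, and applying \Cref{prop:alpha} uniformly over all $m\le n$ (routine, since $f(m)\le m^3$ handles small $m$), I obtain
\[
c(G)\ \le\ \binom{q_1}{2}(n-q_1)+f(n-q_1)\ \le\ n^3\,\Phi(q_1/n)+o(n^3),\qquad \Phi(\theta):=\tfrac{\theta^2(1-\theta)}{2}+\alpha(1-\theta)^3.
\]
Since $c(G)\ge f(n)-\eps^2 n^3\ge(\alpha-\eps^2-o(1))n^3$ by \Cref{prop:alpha} again, this yields $\alpha-\Phi(q_1/n)\le 2\eps^2$ for $n$ large (permitted as $1/n\ll\eps$).

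The crux is the identity $\alpha-\Phi(\theta)=\big(\alpha+\tfrac12\big)\,\theta\,(\theta-\beta)^2$. I would derive it by expanding $1-(1-\theta)^3=\theta(3-3\theta+\theta^2)$, which rewrites $\alpha-\Phi(\theta)$ as $\theta$ times the quadratic $(\alpha+\tfrac12)\theta^2-(3\alpha+\tfrac12)\theta+3\alpha$, whose leading coefficient is $\alpha+\tfrac12=\tfrac{1}{\sqrt3}$; one then checks that its discriminant $\tfrac14-3\alpha-3\alpha^2$ vanishes (using $12\alpha^2+12\alpha=1$), so it is a perfect square with double root $\tfrac{3\alpha+1/2}{2\alpha+1}=\tfrac{\sqrt3-1}{2/\sqrt3}=\beta$. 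These are precisely the algebraic facts behind the self-similarity of the extremal construction, so this is a short computation rather than an obstacle. The identity shows $\Phi\le\alpha$ on $[0,1]$ with equality only at $\theta=0$ and $\theta=\beta$; the spurious maximiser $\theta=0$ is exactly why the bound above does not immediately finish the proof.

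Finally I would split on $\theta:=q_1/n$. If $\theta\ge 10^{-3}$, then $\big(\alpha+\tfrac12\big)\theta(\theta-\beta)^2\le 2\eps^2$ together with $\alpha+\tfrac12\ge\tfrac12$ and $\theta\ge 10^{-3}$ forces $(\theta-\beta)^2\le 4000\,\eps^2$, so $Q:=Q_1$ satisfies $\big||Q|-\beta n\big|<64\eps n\le 100\eps n$. If instead $\theta<10^{-3}$, then $q_i\le q_1<10^{-3}n$ for all $i$, hence $\binom{q_i}{2}\le\tfrac{q_1}{2}q_i$ and so $c(G)\le\sum_i\binom{q_i}{2}m_{i+1}\le\tfrac{q_1}{2}\sum_{i<j}q_iq_j\le\tfrac{q_1}{2}\cdot\tfrac{n^2}{2}<\tfrac14\cdot 10^{-3}n^3$, contradicting $c(G)\ge(\alpha-\eps^2-o(1))n^3>\tfrac1{20}n^3$ for $n$ large and $\eps$ small (recall $\alpha\approx 0.077$). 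Thus the second case cannot occur, and the proof is complete. The one genuinely non-mechanical point — what I would flag as the hard part — is realising that the natural upper bound $\Phi(q_1/n)$ is maximised not only at $\theta=\beta$ but also at $\theta=0$, which makes the separate (elementary) argument excluding a largest part of size $o(n)$ necessary.
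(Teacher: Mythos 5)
Your proof is correct and follows essentially the same route as the paper: bound $c(G)\le\binom{q_1}{2}(n-q_1)+f(n-q_1)$, rule out a sublinear largest clique by a crude count, and deduce $q_1/n\approx\beta$ from the resulting deficit inequality. The only real difference is the final quantitative step, where the paper factors the deficit as $\theta(3-3\theta+\theta^2)\bigl(g(\beta)-g(\theta)\bigr)$ and invokes the separate calculus estimate of \Cref{claim:calculus}, whereas you use the exact identity $\alpha-\Phi(\theta)=\tfrac{1}{\sqrt{3}}\,\theta(\theta-\beta)^2$ (which does check out, since $\alpha+\tfrac12=1/\sqrt{3}$ and $3\alpha^2+3\alpha=\tfrac14$), giving a slightly cleaner finish that bypasses the Taylor-expansion argument.
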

	
	Define a function $g:[0,1] \to \mathbb{R}$ as follows.
	\begin{equation} \label{eqn:g}
		g(x) = \frac{x(1-x)}{2(3-3x+x^2)}.
	\end{equation}
	It will be convenient to note the following equation.
	\begin{equation} \label{eqn:g-var}
		\left(1 - (1 - x)^3\right) \cdot g(x) = \frac{1}{2} \cdot x^2(1 - x).
	\end{equation}
	One can check that $g'$ is decreasing and $g'(\beta) = 0$, showing that
	\begin{equation} \label{eqn:g-beta}
		g(x) \le g(\beta) = \alpha \qquad \text{for $x \in [0,1]$}.
	\end{equation}
	
	We first prove \Cref{prop:alpha} regarding the value of $f(n)$.
	\begin{proof}[Proof of \Cref{prop:alpha}]
		We show that $f(n) \leq \alpha n^3$ by induction on $n$.  This is true for $n=1$. Suppose that $f(m)\leq  \alpha m^3$ for $m< n$.  
		
		Given $k \in [n-1]$ that maximises the LHS in \eqref{eqn:k}, write $x = k/n$. The recursive definition of $f$ implies that  $ \frac{f(n)}{n^3} \leq \frac 12 \cdot x^2(1-x) + \alpha(1-x)^3$. Subtracting $\alpha$ and using \eqref{eqn:g-var}, we obtain
		\begin{equation*}
			\frac{f(n)}{n^3}- \alpha \leq \frac 12 \cdot x^2(1-x) - \alpha(1-(1-x)^3) = (1-(1-x)^3)(g(x)-\alpha) \le 0,
		\end{equation*}
		as required.
		
		To verify that $f(n) \geq (\alpha + o(1))n^3$, set $x_i =\lfloor \beta (1-\beta)^i n \rfloor$ in~\eqref{eqn:f}.
	\end{proof}
	
	\begin{proof}[Proof of \Cref{lem:theta}]
		Let $Q_1, \ldots, Q_t$ be the full blue clone-cliques in $G$, arranged in descending order according to their sizes. Let $G'$ be obtained from $G$ by orienting the $(Q_i, Q_j)$ (red) edges towards $Q_i$, for $1 \le i < j \le t$. As explained before and by assumption on $G$, $c(G') \ge c(G) \ge f(n) - \eps^2 n^3$.
		
		Notice that $|Q_1| \ge 0.01 n$, because otherwise $c(G) \le n^2|Q_1| \le 0.01 n^3 < f(n) - \eps^2 n^3$ (recall that $f(n) \approx 0.077n^3$, by \Cref{prop:alpha}).
		
		Write $|Q_1| = \theta n$. Then, using $f(n) = \alpha n^3 + o(n^3) = g(\beta) n^3 + o(n^3)$ (which follows from \Cref{prop:alpha} and the definition of $\alpha$ in \eqref{eq:alphabeta}),
		\begin{equation*}
			c(G') \le \binom{|Q_1|}{2}(n - |Q_1|) + f(n - |Q_1|)
			\le \frac{1}{2} \theta^2 (1 - \theta)n^3 + g(\beta)(1 - \theta)^3n^3 + o(n^3).
		\end{equation*}
		Thus, using \eqref{eqn:g-var},
		\begin{align*}
			\eps^2 \ge \frac{f(n) - c(G')}{n^3} 
			& \ge g(\beta) - g(\beta)(1 - \theta)^3 - \frac{1}{2}\theta^2(1 - \theta) + o(1) \\
			& = \theta \cdot (3 - 3\theta + \theta^2) \cdot (g(\beta) - g(\theta)) + o(1) \\
			& \ge 0.02 \cdot (g(\beta) - g(\theta)) + o(1).
		\end{align*}
		For the last inequality we used $\theta \ge 0.01$, which implies $\theta(3-3\theta+\theta^2) \ge 0.02$. 
		By bounding the $o(1)$ term by $\eps^2/2$ and using \Cref{claim:calculus} below, we get
		\begin{equation*}
			100\eps^2 \ge g(\beta) - g(\theta) \ge \min\{0.05(\beta - \theta)^2, 0.005\}.
		\end{equation*}
		Since $\eps$ is very small, we get $100 \eps^2 \ge 0.05(\beta - \theta)^2$, which implies $|\beta - \theta| \le 100\eps$.
	\end{proof}
	
	\begin{claim} \label{claim:calculus}
		For $x \in [0,1]$,
		\begin{equation}
			g(\beta) - g(x) \geq \min\{0.05(\beta-x)^2, 0.005\}.
		\end{equation}
	\end{claim}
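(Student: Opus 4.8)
The plan is to analyze the function $g(x) = \frac{x(1-x)}{2(3-3x+x^2)}$ on $[0,1]$ and exploit its known critical behaviour: $g'$ is decreasing with $g'(\beta) = 0$, so $g$ has a unique maximum at $x = \beta$ with $g(\beta) = \alpha$, and $g$ is strictly increasing on $[0,\beta]$ and strictly decreasing on $[\beta, 1]$. The inequality we want is a quantitative (quadratic) lower bound on how fast $g$ drops away from its maximum, valid uniformly on $[0,1]$.

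First I would handle the region near $\beta$ using the second-order behaviour of $g$. Since $g$ is smooth on $[0,1]$ (the denominator $3 - 3x + x^2$ has discriminant $9 - 12 < 0$, hence is bounded below by $3 - 3 + 1 = 1$... more precisely it attains minimum $9/4$ at $x = 3/2$, so on $[0,1]$ it lies in $[1, 3]$), its second derivative is bounded on $[0,1]$, say $|g''| \le M$ for an explicit constant $M$. Together with $g'(\beta) = 0$ and Taylor's theorem, $g(\beta) - g(x) = -\tfrac12 g''(\xi)(\beta - x)^2 \ge \tfrac{c}{2}(\beta-x)^2$ once one checks $g''$ is bounded away from $0$ (in fact negative) on a neighbourhood of $\beta$; a direct computation of $g''(\beta)$ gives a concrete negative value, and by continuity $g'' \le -c_0 < 0$ on some interval $[\beta - \delta, \beta + \delta]$. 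Choosing the constants so that $c_0 \ge 0.1$ on this interval yields $g(\beta) - g(x) \ge 0.05(\beta - x)^2$ for $|x - \beta| \le \delta$.

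For the region $|x - \beta| \ge \delta$, I would use monotonicity: since $g$ is strictly increasing on $[0,\beta]$ and strictly decreasing on $[\beta,1]$, the quantity $g(\beta) - g(x)$ is minimized over $\{x : |x-\beta| \ge \delta\}$ at one of the two endpoints $x = \beta \pm \delta$, so $g(\beta) - g(x) \ge \min\{g(\beta) - g(\beta-\delta),\, g(\beta) - g(\beta+\delta)\}$ there. This is a single explicit numerical quantity, and the claim asks only that it be at least $0.005$, which one verifies by plugging in. Since on $|x-\beta|\le\delta$ we have the quadratic bound (which is what the $\min$ in the claim picks up there) and on $|x-\beta|\ge\delta$ we have the constant bound $0.005$, the two cases together give $g(\beta) - g(x) \ge \min\{0.05(\beta-x)^2, 0.005\}$ for all $x \in [0,1]$.

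The main obstacle is purely computational: pinning down an explicit $\delta$ and checking the second-derivative sign and the endpoint values with the right numerical slack so that the constants $0.05$ and $0.005$ come out as stated. There is no conceptual difficulty — it is a one-variable calculus estimate — but one must be a little careful to choose $\delta$ small enough that $g'' \le -0.1$ throughout $[\beta-\delta,\beta+\delta]$ while simultaneously large enough that $g(\beta) - g(\beta \pm \delta) \ge 0.005$; since $g''(\beta)$ is comfortably more negative than $-0.1$ and $g$ decays at a definite rate, such a $\delta$ exists, and any reasonable choice (e.g. $\delta$ a small absolute constant like $1/10$) can be checked to work.
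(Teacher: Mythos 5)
Your plan is essentially the paper's proof: near the maximiser $\beta$ you use Taylor's theorem with a negative upper bound on $g''$ to get the quadratic term, and away from $\beta$ you use the monotonicity of $g$ on $[0,\beta]$ and $[\beta,1]$ plus a single numerical evaluation to get the constant $0.005$. The one concrete difference is your symmetric window $[\beta-\delta,\beta+\delta]$ versus the paper's asymmetric split at $x=0.5$ (the paper uses that $g''$ is decreasing, so $g''\le g''(0.5)\le -0.4$ on all of $[0.5,1]$, runs the Taylor bound on $[0.5,1]$, and checks $g(\beta)-g(0.5)\ge 0.005$ for $[0,0.5]$). Be warned that your suggested value $\delta=1/10$ does \emph{not} work for the far region: numerically $g(\beta)\approx 0.0774$ while $g(\beta-0.1)\approx 0.0739$ and $g(\beta+0.1)\approx 0.0730$, so $g(\beta)-g(\beta\pm 0.1)\approx 0.0035$ and $0.0043$, both below $0.005$, and hence your claimed constant lower bound fails just outside the window (the claim itself is still true there, since $0.05(\beta-x)^2\approx 0.0005$, but your argument would not establish it). The fix is either to enlarge the window to roughly $\delta\approx 0.13$ (the Taylor part survives, since $g''\le g''(0.5)\le -0.4$ on $[0.5,1]$ and $\beta-0.13>0.5$) or to adopt the paper's asymmetric split at $0.5$; with either choice your argument goes through, so this is a numerical slip rather than a conceptual gap.
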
  
	\begin{proof}
		We use the following facts that can be checked easily.
		\begin{itemize}
			\item
			The function $g(x)$ is increasing on $[0,\beta]$ and decreasing on $[\beta, 1]$. In particular, its maximum is attained at $\beta$, and $g'(\beta) = 0$.
			\item
			$g(\beta) - g(0.5) \ge 0.005$.
			\item
			The second derivative $g''(x)$ (which is $\frac{-x(2x^2 - 9x + 9)}{(x^2 - 3x + 3)^3}$) is non-negative and decreasing on $[0,1]$. In particular $g''(x) \le g''(0.5) \le -0.4$ for $x \in [0.5, 1]$.
			\item
			By Taylor's expansion: $g(x) = g(\beta) + \frac{1}{2}g'(\beta)(x - \beta) + \frac{1}{6}g''(c_x)(x - \beta)^2$ for every $x \in [0,1]$ and some $c_x$ between $x$ and $\beta$.
		\end{itemize}
		
		By the first and second items (using $\beta > 0.5$), if $x \in [0, 0.5]$ then 
		\begin{equation*}
			g(\beta) - g(x) \ge g(\beta) - g(0.5) \ge 0.005.
		\end{equation*}
		By the first, third and fourth items, if $x \in [0.5, 1]$, then 
		\begin{equation*}
			g(\beta) - g(x) \ge \frac{0.4}{6}(x - \beta)^2 \ge 0.05(x - \beta)^2.
		\end{equation*}
		The two inequalities prove the claim.
	\end{proof}

	\subsection{Blue clone-cliques before and after symmetrisation} \label{subsec:Q-lemmas}
	The next two lemmas show that if a symmetrisation procedure on $G$ produces a full blue clone-clique $Q$ of size approximately $\beta n$, then almost all edges in $G[Q, V \setminus Q]$ are red and oriented towards $Q$ and almost all edges in $G[Q]$ are blue.
	
	\begin{lemma} \label{lem:Q-red}
		Let $1/n \ll \eps \ll 1$.
		of a procedure $S_G(x)$, and suppose that $|Q| \ge 0.55n$.
		Then all but at most $10\eps n^2$ edges in $G[Q, V \setminus Q]$ are red and directed towards $Q$.
	\end{lemma}
	\begin{proof}
		\def \Vi{V_{\inn}} 
		\def \Vo{V_{\out}} 
		Set $U := V \setminus Q$, let $\Vi$ be the set of vertices $u$ in $U$ for which $uq$ is a red arc in $G'$ for every $q \in Q$, and let $\Vo := U \setminus \Vi$.
		We will show that $\Vo$ is small, and that not many pairs incident to $\Vi$ were recoloured during the symmetrisation procedure $S_G(x)$.
		
		First, we show $|\Vo| \leq 40\eps^2 n$.
		Let $G''$ be obtained from $G'$ by reorienting the edges in $G'[Q, \Vo]$ to point towards $Q$. Notice that the cherries in $G'$ that contain an edge in $(Q, \Vo)$ consist of one vertex in $Q$ and two in $\Vo$, and thus their number is at most $|Q|\binom{|\Vo|}{2}$. Also, every set consisting of two vertices in $Q$ and one in $\Vo$ is a cherry in $G''$ but not in $G'$. Thus, using $|Q| \ge 0.55 n$ which implies $|Q| - |\Vo| \ge 0.1n$,
		\begin{align*}
			c(G'') - c(G') 
			& \ge \binom{|Q|}{2}|\Vo| - \binom{|\Vo|}{2}|Q| 
			= \frac 12 |Q||\Vo|(|Q| - |\Vo|) \\
			& \ge \frac{1}{2} \cdot \frac{n}{2} \cdot \frac{n}{10} \cdot |\Vo|
			= \frac{n^2}{40} \cdot |\Vo|.
		\end{align*}
		Recall that $c(G) \ge f(n) - \eps^2 n^3$ by assumption, $c(G') \ge c(G)$ by \Cref{claim:nhoods}, and $c(G'') \le f(n)$ by Theorem~\ref{theorem_intro_falgas_ravry}. Altogether, this implies $c(G'') - c(G') \le \eps^2 n^3$ and thus $|\Vo| \le 40\eps^2 n$, as claimed.
		
		Let $R$ be the set of edges $qv$ in $(Q, V \setminus Q)$ that are red and oriented towards $Q$ in $G'$ but not in $G$. 
		We now upper-bound $|R|$. Notice that each such edge in $R$ was recoloured to a red arc oriented towards $Q$ at some point during $S_G(x)$ (possibly more than once).
		Let $G = G_1, \ldots, G_{t} = G'$ be the graphs obtained during the symmetrisation process on $Q$ and let $x_1, \ldots, x_t$ be the corresponding sequence of vertices.
		For each $v \in V$ and $k \in [t]$, let $A_k(v)$ be the set of ordered pairs $vq$ which changed to red arcs in step $k$ (so they were recoloured from $G_{k-1}$ to $G_k$).
		
		We claim that $\sum_{k \geq \eps n} \sum_{v \in \Vi} |A_k(v)| \leq 4 \eps n^2$.
		To see this, fix $k \geq \eps n$ and consider the $k$-th step. If $y_k = x_1$, then $A_k(v) = \{vx_k\}$ for $v \in \Nm(x_1) \setminus \Nm(x_k)$ and $A_k(v) = \emptyset$ otherwise, where $\Nm(\cdot)$ refers to the in-neighbourhood with respect to $G_{k-1}$. Thus, using \Cref{claim:nhoods}~\ref{itm:nhoods1},
		\begin{equation*}
			\sum_{v \in \Vi} |A_k(v)| 
			\leq \big|N^-(x_1) \setminus N^-(x_k)\big| 
			\leq \frac{4}{k} \cdot \big(c(G_k) - c(G_{k-1})\big).
		\end{equation*}
		If $y_k = x_k$, then $A_k(v) = \{vx_1, \ldots, vx_{k-1}\}$ for $v \in \Nm(x_k) \setminus \Nm(x_1)$ and $A_k(v) = \emptyset$ otherwise. Thus, by \Cref{claim:nhoods}~\ref{itm:nhoodsk},
		\begin{equation*}
			\sum_{v \in \Vi} |A_k(v)| 
			\leq (k-1) \cdot \big|N^-(x_1) \setminus N^-(x_k)\big| 
			\leq \frac{4}{k} \cdot \big(c(G_k) - c(G_{k-1})\big),
		\end{equation*}
		In either case, we get that for  $k \geq \eps n$, 
		\begin{equation*}
			\sum_{v \in \Vi} |A_k(v)| \leq \frac{4}{\eps n}\big(c(G_k) - c(G_{k-1})\big).
		\end{equation*}
		Summing over $k \geq \eps n$, we obtain the required inequality 
		\begin{equation*}
			\sum_{k \ge \eps n} \sum_{v \in \Vi} |A_k(v)| \leq \frac{4}{\eps n}\big(c(G') - c(G_{\eps n})\big) \leq 4 \eps n^2,
		\end{equation*}
		Where the last equality holds since $c(G') - c(G_{\eps n}) \leq \eps^2 n^2$.
		
		Note that $|R| \le \eps n^2 + \sum_{k \ge \eps n} \sum_{v \in \Vi} |A_k(v)| \le 5\eps n^2$.
		In total, all but at most $(40 \eps^2 + 5\eps)n^2 \leq 10 \eps n^2$ pairs in $(Q, V \setminus Q)$ are red and oriented towards $Q$.
	\end{proof}

	\begin{lemma} \label{lem:Q-blue}
		Let $1/n \ll \eps \ll 1$.
		Let $G$ be a coloured graph on $n$ vertices with at least $f(n) - \eps^2 n^3$ cherries. Suppose that $G'$ and $Q$ are the graph and full blue clone-clique produced by the procedure $S_G(x)$, and suppose that $0.55n \le |Q| \le 0.65n$.
		Then all but $1200\eps n^2$ edges in $G[Q]$ are blue.
	\end{lemma}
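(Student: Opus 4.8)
The plan is to bound $c(G)$ from above in terms of the number $m$ of red edges of $G[Q]$, and to show that $m > 1200\eps n^2$ would force $c(G) < f(n) - \eps^2 n^3$. Throughout, write $U = V(G)\setminus Q$, $q = |Q|$ and $u = |U| = n-q$, so that $0.35n \le u \le 0.45 n$; since $|Q|\ge 0.55n$ we may invoke \Cref{lem:Q-red}, which tells us that the set $B$ of edges of $G[Q,U]$ that are \emph{not} red and oriented towards $Q$ has $|B|\le 10\eps n^2$.

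First I would split the cherries of $G$ according to the location of their apex and then of their two feet. For cherries with apex in $U$: those with both feet in $Q$ use a blue edge of $G[Q]$, so there are at most $u\big(\binom{q}{2}-m\big)$ of them in total; those with one foot in $Q$ and one in $U$ use a blue edge of $B$, and since each such edge admits at most $u$ choices of apex this class has size at most $u|B|$; and those with all three vertices in $U$ are cherries of the coloured graph $G[U]$, hence number at most $f(u)$ by \Cref{theorem_intro_falgas_ravry}. For a cherry with apex $x\in Q$, let $r(x)$ be the number of red out-neighbours of $x$ inside $Q$ (so $\sum_{x\in Q}r(x)=m$), and note that $x$ has at most $\deg_B(x)$ red out-neighbours in $U$; the number of cherries with apex $x$ is thus at most $\binom{r(x)}{2}+r(x)\deg_B(x)+\binom{\deg_B(x)}{2}$. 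Summing over $x\in Q$ and using $r(x)\le q$, $\deg_B(x)\le u$, and $\sum_{x\in Q}\deg_B(x)=|B|$, this contributes at most $\tfrac{q}{2}m + O(\eps n^3)$.

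Adding everything together yields $c(G) \le \big(u\binom{q}{2}+f(u)\big) + \big(\tfrac{q}{2}-u\big)m + O(\eps n^3)$. The key point is that $u\binom{q}{2}+f(u)\le f(n)$ by the recursion~\eqref{eqn:k} (taking $k=q$), so $c(G) \le f(n) + \big(\tfrac{q}{2}-u\big)m + O(\eps n^3)$. Combined with $c(G)\ge f(n)-\eps^2n^3$ this gives $\big(u-\tfrac{q}{2}\big)m \le O(\eps n^3)$, and since $u - \tfrac{q}{2} = n - \tfrac{3q}{2} \ge 0.025n$ (using $|Q|\le 0.65n$) we obtain $m = O(\eps n^2)$; tracking the constants in the $O(\eps n^3)$ terms — which arise solely from $|B|$ — yields $m \le 1200\eps n^2$, as required.

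I expect the only delicate point to be the treatment of cherries with apex in $Q$: a large value of $m$ could a priori create many of these and offset the cherries lost in the class ``apex in $U$, both feet in $Q$''. What rescues us is that $\sum_{x\in Q}\binom{r(x)}{2}\le \tfrac{q}{2}m$ while that loss is $um$, and $\tfrac{q}{2}<u$ precisely because $|Q|\le 0.65n<2u$; this is exactly where the upper bound on $|Q|$ is used (the lower bound $|Q| \ge 0.55n$ is used only to apply \Cref{lem:Q-red}). Everything else is routine bookkeeping of the $O(\eps n^3)$ error terms, which comfortably fit inside the $\eps^2 n^3$ slack once $\eps$ is small, so no further ideas should be needed.
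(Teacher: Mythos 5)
Your proof is correct, and the constants work out with room to spare (the error terms are $u|B| + q|B| + \tfrac{u}{2}|B| + \eps^2 n^3 \le 14\eps n^3$, and dividing by $u - \tfrac{q}{2} \ge 0.025n$ gives roughly $560\eps n^2 \le 1200\eps n^2$). The arithmetic heart of your argument — red edges inside $Q$ buy at most $\tfrac{q}{2}m$ apex-in-$Q$ cherries while forfeiting $um$ apex-outside cherries, and $\tfrac{q}{2} < u$ precisely because $|Q| \le 0.65n$ — is the same as in the paper, but the scaffolding is genuinely different. The paper does a local recolouring comparison: it forms $F$ from $G$ and $F'$ from $G'$ by making all $(Q, V\setminus Q)$ edges red towards $Q$, notes $F'$ is $F$ with $Q$ made blue, lower-bounds $c(F') - c(F) \ge e\big(n - \tfrac32|Q|\big)$, and upper-bounds it by $30\eps n^3$ using \Cref{lem:Q-red} applied \emph{twice} (to $G$ and to $G'$) together with $c(G') - c(G) \le \eps^2 n^3$, which comes from $c(G') \le f(n)$. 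You instead decompose $c(G)$ directly by apex location, apply \Cref{lem:Q-red} only once, invoke \Cref{theorem_intro_falgas_ravry} on the induced coloured graph $G[U]$ to bound the all-in-$U$ cherries by $f(u)$, and close via the recursion \eqref{eqn:k}, $u\binom{q}{2} + f(u) \le f(n)$. Your route trades the paper's clean two-graph swap for a slightly longer case analysis, but it is more self-contained in that it never needs to compare $G$ with the symmetrised graph $G'$ beyond what \Cref{lem:Q-red} already provides, and it makes explicit where the extremal bound enters (on $G[U]$, via the recursion) rather than hiding it in the inequality $c(G') \le f(n)$. Both uses of the hypotheses match: the lower bound $|Q| \ge 0.55n$ only feeds \Cref{lem:Q-red}, and the upper bound $|Q| \le 0.65n$ is what makes the coefficient of $m$ strictly positive.
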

	
	\begin{proof}
		Let $F$ and $F'$ be the graphs obtained from $G$ and $G'$ by colouring all $(Q, V \setminus Q)$ edges red and orienting them towards $Q$. Notice that $F'$ can be obtained from $F$ by colouring all edges in $Q$ blue.
		
		We will first derive an upper bound on $c(F') - c(F)$. By \Cref{lem:Q-red}, the graphs $G$ and $F$ differ on at most $10\eps n^2$ edges and thus $|c(G) - c(F)| \le 10\eps n^3$. Similarly, $|c(G') - c(F')| \le 10\eps n^3$ (the lemma is still applicable, as $S_{G'}(x)$ does not change the graph $G'$). By assumption on $G$ we also have $c(G') - c(G) \le \eps^2 n^3$. Altogether,
		\begin{equation} \label{eqn:upper}
			c(F') - c(F) \le c(G') - c(G) + 20\eps n^3 \le (\eps^2 + 20\eps)n^3 \le 30 \eps n^3.
		\end{equation}
		
		\def \dpp {d^+}
		We now obtain a lower bound on the same quantity.
		Let $e$ be the number of red edges in $G[Q]$.
		The number of cherries in $F$ that are not cherries in $F'$ is at most $\sum_{q \in Q} \binom{\dpp(q)}{2}$, where $\dpp(q)$ denotes the red out-degree of $q$ in $F[Q]$. Notice that 
		\begin{equation*}
			\sum_{q \in Q} \binom{\dpp(q)}{2} \le \frac{1}{2} \sum_{q \in Q}(\dpp(q))^2 \le \frac 12 e|Q|, 
		\end{equation*}
		because $\dpp(q) \le |Q|$ and $e = \sum_q \dpp(q)$.
		
		On the other hand, the number of cherries in $F'$ that are not cherries in $F$ is exactly $e(n - |Q|)$. Thus,
		\begin{equation} \label{eqn:lower}
			c(F') - c(F) 
			\ge e(n - |Q|) - \frac 12 e|Q| 
			= e\cdot \big(n - \frac{3}{2}|Q|\big) 
			\ge \frac{en}{40},
		\end{equation}
		using $|Q| \le 0.65 n$.
		
		By \eqref{eqn:upper} and \eqref{eqn:lower}, we have $e \le 1200 \eps n^2$, as claimed.
	\end{proof}

	\subsection{Proof of \Cref{lem:ind-step}} \label{subsec:proof}
	
	Finally, we start with the actual proof of \Cref{lem:ind-step}. The first step is to find a set $Q'$ of the right size almost all of whose edges in $G$ are blue.
	
	\begin{lemma} \label{lem:beta-clique}
		Let $1/n \ll \eps \ll 1$.
		Let $G$ be a coloured graph on $n$ vertices, satisfying $c(G) \ge f(n) - \eps^2 n^3$.
		Then there is a set $Q' \subseteq V(G)$ such that $\big||Q'| - \beta n\big| \le 100\eps n$ and all but at most $1200\eps n^2$ edges in $G[Q']$ are blue.
	\end{lemma}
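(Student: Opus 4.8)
The plan is to run the iterated symmetrisation procedure on $G$, use \Cref{lem:theta} to locate the part of size roughly $\beta n$, and then ``rewind'' to the stage just before that part was built, where \Cref{lem:Q-blue} can be applied.

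Concretely, I would iterate the symmetrisation procedure exactly as in the proof of \Cref{theorem_intro_falgas_ravry}: set $G_0 := G$, and while the current graph $G_{i-1}$ contains a vertex $x_i$ lying in no full blue clone-clique, let $G_i$ and $Q_i$ be the graph and full blue clone-clique output by $S_{G_{i-1}}(x_i)$. This terminates with $V(G)$ partitioned into full blue clone-cliques $Q_1, \ldots, Q_m$, and \Cref{claim:nhoods} gives $c(G_m) \ge c(G_{m-1}) \ge \cdots \ge c(G) \ge f(n) - \eps^2 n^3$. Since $G_m$ is partitioned into full blue clone-cliques and has this many cherries, \Cref{lem:theta} produces a full blue clone-clique of $G_m$ of size within $100 \eps n$ of $\beta n$. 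One checks that every full blue clone-clique of $G_m$ is one of the parts $Q_i$ (a proper subclique of a part is not ``full'', and a set meeting two parts contains a non-blue edge), so this clique equals some $Q_j$ with $\bigl| |Q_j| - \beta n \bigr| \le 100\eps n$; in particular, as $\beta \approx 0.634$ and $\eps$ is small, $0.55n \le |Q_j| \le 0.65 n$.

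It remains to transfer the desired blueness from $G_{j-1}$ back to $G$, and for this I would use two locality observations. First, a single run of the symmetrisation procedure recolours only edges incident to the clique it outputs, so $G_i$ and $G_{i-1}$ differ only on edges meeting $Q_i$; iterating, $G_{j-1}$ agrees with $G$ off the edges meeting $Q_1 \cup \cdots \cup Q_{j-1}$. Second, since the vertices of a full blue clone-clique emit only red edges to the rest of the graph and such cliques survive later symmetrisation steps, no vertex of $Q_1 \cup \cdots \cup Q_{i-1}$ is ever recruited (in Step~\ref{step:4}) into $Q_i$; hence $Q_i$ is disjoint from $Q_1 \cup \cdots \cup Q_{i-1}$. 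Together these yield $G_{j-1}[Q_j] = G[Q_j]$. Finally I would invoke \Cref{lem:Q-blue} with $G_{j-1}$ in the role of ``$G$'' — valid since $c(G_{j-1}) \ge f(n) - \eps^2 n^3$ and $S_{G_{j-1}}(x_j)$ outputs $G_j$ together with the full blue clone-clique $Q_j$, whose size lies in $[0.55n, 0.65n]$ — to conclude that all but at most $1200\eps n^2$ edges of $G_{j-1}[Q_j] = G[Q_j]$ are blue. Setting $Q' := Q_j$ then gives the lemma.

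The step I expect to require the most care is this locality/disjointness argument, i.e.\ showing that the clique $Q_j$, produced deep inside the iterated process, can be pulled back to the original graph $G$; it rests on the structural facts (used implicitly in the proof of \Cref{theorem_intro_falgas_ravry}) that a full blue clone-clique sends only red edges outward and that the symmetrisation procedure leaves existing full blue clone-cliques intact. The remaining ingredients — the cherry monotonicity of \Cref{claim:nhoods}, the size estimate of \Cref{lem:theta}, and the blueness estimate of \Cref{lem:Q-blue} — are used as black boxes, and the only numerics required are the trivial $\beta - 100\eps > 0.55$ and $\beta + 100\eps < 0.65$.
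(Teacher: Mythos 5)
Your proposal is correct and follows essentially the same route as the paper: iterate the symmetrisation procedure, apply \Cref{lem:theta} to the final graph to locate a full blue clone-clique $Q_j$ of size $\approx \beta n$, rewind to the graph just before the run that built $Q_j$, and apply \Cref{lem:Q-blue} there. The locality/disjointness argument you spell out (edges inside $Q_j$ are untouched before that run, since earlier runs only recolour edges meeting their own output cliques and full blue clone-cliques are never recruited later) is exactly the fact the paper invokes with the remark that ``the edges in $Q'$ remain untouched until right before a symmetrisation process is started on an element of $Q'$''.
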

	\begin{proof}
		Similarly to the proof of Theorem~\ref{theorem_intro_falgas_ravry}, start with $G' = G$, and, as long as $G'$ has a vertex $x$ which is not in a full blue clone-clique, run the symmetrisation procedure $S_{G'}(x)$, and replace $G'$ by the resulting graphs. Denote by $\Gf$ the graph at the end of the process (as before, the process is guaranteed to end). Then the vertices of $\Gf$ can be partitioned into full blue clone-cliques $Q_1, \ldots, Q_t$. 
		
		Let $Q'$ be the vertex set of the largest clone-clique. By \Cref{lem:theta}, we have $\big||Q'| - \beta n\big| \le 100\eps n$. In particular $|Q'| \in [0.55n, 0.65n]$.
		
		Let $F_1$ be the graph created just before the symmetrisation procedure was started on an element of $Q'$, and let $F_2$ be the graph just after $Q'$ was built. Notice that $c(F_2) \ge c(F_1) \ge c(G) \ge f(n) - \eps^2 n^3$.
		By \Cref{lem:Q-blue}, all but at most $1200\eps n^2$ edges in $F_1[Q']$ are blue. Notice that during the above process, the edges in $Q'$ remain untouched until right before a symmetrisation process is started on an element of $Q'$. It follows that all but at most $1200 \eps n^2$ edges in $G[Q']$ are blue.
	\end{proof}

	Now we can complete the proof by running a symmetrisation procedure in two phases. The first phase generates a blue clique $Q$ which contains almost all the vertices of $Q'$. The second phase allows us to show that $Q$ cannot be much larger than $Q'$ and to control the remaining edges incident to $Q$.
	
	\begin{proof}[Proof of Lemma~\ref{lem:ind-step}]
		Apply \Cref{lem:beta-clique} to find $Q'$ such that $\big||Q'| - \beta n\big| \le 100\eps n$ and $G[Q']$ has at most $\delta^2 n^2$ red edges (with $\delta^2= 1200\eps$).
		
		\begin{claim}
			We can run a symmetrisation procedure on $G$ which results in a graph $G'$ and a full blue clone-clique $Q$ satisfying $|Q' \setminus Q| \leq 3 \delta n$.
		\end{claim}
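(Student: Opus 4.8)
The plan is to run the symmetrisation procedure $S_G(x_1)$ starting from a well‑chosen vertex $x_1 \in Q'$ and to exploit the freedom in step~\ref{step:4} (``$x_{k+1}$ can be chosen judiciously'') so that the clone-clique being built keeps absorbing vertices of $Q'$. Call a vertex $v \in Q'$ \emph{good} if it is incident to at most $\delta n$ red edges inside $G[Q']$. Since $G[Q']$ has at most $\delta^2 n^2$ red edges, the sum of the red $G[Q']$-degrees is at most $2 \delta^2 n^2$, so all but at most $2\delta n$ vertices of $Q'$ are good. I will choose $x_1$ to be good, and at every subsequent step I will pick $x_{k+1}$ to be a good vertex of $Q'$ that sends only blue edges to the current clone-clique, as long as the clone-clique does not yet contain all but $3\delta n$ vertices of $Q'$.

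The technical core is a bookkeeping observation about how the procedure alters edges. For $j \le k$ the graph $G_j$ is obtained from $G_{j-1}$ by recolouring only edges incident to $\{x_1, \dots, x_j\}$; hence for any two vertices $u, v \notin \{x_1, \dots, x_k\}$ the edge $uv$ has exactly the same colour and orientation in $G_k$ as in $G = G_1$. In particular, when at step $k+1$ we pick $x_{k+1} \in Q'$, the edges between $x_{k+1}$ and $Q' \setminus \{x_1, \dots, x_{k+1}\}$ are untouched, i.e.\ as in $G$. Using this, I will track the set $B_k$ of vertices of $Q' \setminus \{x_1, \dots, x_k\}$ that send a red edge to the blue clone-clique $\{x_1, \dots, x_k\}$ in $G_k$ (equivalently, that do \emph{not} send only blue edges to it), and prove by induction the invariant $|B_k| \le \delta n$ under the assumption that all of $x_1, \dots, x_k$ are good vertices of $Q'$. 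The base case holds because $B_1$ lies in the red $G[Q']$-neighbourhood of the good vertex $x_1$. For the inductive step, if $y_{k+1} = x_1$ then each $v \in Q' \setminus \{x_1, \dots, x_{k+1}\}$ inherits edges to the new copies equal to its edge $v x_1$ in $G_k$, so $B_{k+1} \subseteq B_k$; and if $y_{k+1} = x_{k+1}$ then $v$ inherits edges to the new copies equal to $v x_{k+1}$ in $G_k$, which by the observation equals $v x_{k+1}$ in $G$, so $B_{k+1}$ lies in the red $G[Q']$-neighbourhood of the good vertex $x_{k+1}$ and again $|B_{k+1}| \le \delta n$.

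With the invariant in hand the conclusion is quick. Suppose at step $k$ that $|Q' \cap \{x_1, \dots, x_k\}| < |Q'| - 3\delta n$ and that $x_1, \dots, x_k$ are all good vertices of $Q'$. Then there are more than $(|Q'| - 2\delta n) - |Q' \cap \{x_1, \dots, x_k\}| > \delta n$ good vertices of $Q'$ outside the clone-clique, and at most $|B_k| \le \delta n$ of them lie in $B_k$, so at least one good vertex $v \in Q' \setminus \{x_1, \dots, x_k\}$ sends only blue edges to $\{x_1, \dots, x_k\}$; setting $x_{k+1} := v$ maintains the invariant and, crucially, guarantees that step~\ref{step:3} does not terminate the procedure. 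Hence the procedure continues until $|Q' \cap \{x_1, \dots, x_k\}| \ge |Q'| - 3\delta n$, after which I allow it to make arbitrary choices; it must eventually halt since the clone-clique only ever grows. Its final output is a graph $G'$ and a full blue clone-clique $Q$ with $Q \supseteq \{x_1, \dots, x_k\}$, so $|Q' \setminus Q| \le |Q' \setminus \{x_1, \dots, x_k\}| \le 3\delta n$. The only real obstacle is that a clone step can disturb edges inside $Q'$ and thereby, a priori, create many new ``blocked'' vertices; the resolution, captured by the invariant, is that a clone step towards $x_{k+1}$ simply resets the blocked set to the red $G[Q']$-neighbourhood of the good vertex $x_{k+1}$, which is small.
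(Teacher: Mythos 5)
Your proof is correct and takes essentially the same approach as the paper: the paper likewise restricts to the set $Q''$ of vertices of $Q'$ with at most $\delta n$ red neighbours in $G[Q']$ (so $|Q'\setminus Q''|\le 2\delta n$), runs the symmetrisation preferring such vertices, and uses the fact that cloning keeps all clique vertices copies of a good vertex whose edges to the outside are as in $G$. Your explicit invariant on the ``blocked'' set $B_k$ is just a spelled-out version of the step the paper dismisses as ``it is easy to see'', and the final counting is identical.
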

		\begin{proof}
			Let $A$ be the set of vertices in $Q'$ with more than $\delta n$ red (in- or out-) neighbours in $G[Q']$. The bound on the number of red edges in $Q'$ gives $|A| < 2\delta n$. Define $Q'' := Q' \setminus A$.
			
			We will run a symmetrisation procedure on $G$, but with a specific ordering of vertices. 
			We start with $x_1 \in Q''$ (chosen arbitrarily). Assuming that $\{x_1, \ldots, x_k\}$ are defined and contained in $Q''$, if possible we pick $x_{k+1}$ to also be in $Q''$ (we can do this as long as there is a vertex in $Q'' \setminus \{x_1, \ldots, x_k\}$ whose edges to $\{x_1, \ldots, x_k\}$ are blue). Once this is no longer possible, we continue with the symmetrisation procedure using an arbitrary order of vertices. Let $Q$ be the full blue clone-clique built by this procedure.
			
			Let $k$ be largest such that $\{x_1, \ldots, x_k\} \subseteq Q''$. It is easy to see that throughout the procedure, until at least step $k$, every vertex in $Q''$ has at most $\delta n$ non-blue neighbours in $Q'' \setminus \{x_1, \ldots, x_k\}$. Thus $k \ge |Q''| - \delta n \ge |Q| - 3\delta n$, as otherwise we could find a suitable $x_{k+1}$ in $Q''$, contradicting the choice of $k$. It follows that $|Q' \setminus Q| \le 3\delta n$.
		\end{proof}
		
		Let $G'$ and $Q$ be as in the above Claim. We claim that $|Q| \le (\beta + 100\eps)n$. Indeed, this follows from \Cref{lem:theta} by running symmetrisation procedures repeatedly, starting from $G'$, until the vertices can be partitioned into full blue clone-cliques (one of which is $Q$). It follows that $|Q \setminus Q'| \le 3\delta n + |Q| - |Q'| \le (3\delta + 200\eps)n \le 5\delta n$. In particular, the number of red edges in $G[Q]$ is at most the number of red edges in $G[Q']$ plus the number of edges incident with $Q \setminus Q'$, which amounts to a total of at most $(\delta^2 + 5\delta)n^2 \le 10\delta n^2$ red edges in $G[Q]$.
		
		By \Cref{lem:Q-red}, all but at most $10\eps n^2$ edges in $G[Q, V \setminus Q]$ are red and oriented towards $Q$, and similarly for $G'[Q, V \setminus Q]$.
		\def \Vi{V_{\inn}} 
		\def \Vo{V_{\out}} 
		
		Since $Q$ is a full blue clone-clique in $G'$, the vertices in $V \setminus Q$ can be partitioned into $\Vi$ and $\Vo$, where $vq$ is a red arc for every $v \in \Vi$ and $q \in Q$ and $qv$ is a red arc for $v \in \Vo$ and $q \in Q$. Thus, by the previous paragraph and because $|Q| \ge n/2$, $|\Vo| \le 20\eps n$.
		
		Let $G''$ be obtained from $G'$ be reorienting all $(Q, V \setminus Q)$ edges towards $Q$. Then
		\begin{equation*}
			c(G'') - c(G') 
			\ge \binom{|Q|}{2}{|\Vo|} - \binom{|\Vo|}{2}|Q|
			= |Q||\Vo| \cdot (|Q| - |\Vo|) \ge 0.
		\end{equation*}
		It follows that $c(G'') \ge c(G') \ge c(G)$. Moreover, $G''$ and $G'$ differ on at most $|\Vo|n \le 20\eps n^2$ edges, and thus $G$ and $G''$ differ on at most $(20\eps + 10\eps + 10\delta)n^2 \le 20\delta n^2$ edges.
		Since $G''$ has the required structure, this proves \Cref{lem:ind-step}.
	\end{proof}
	
	\subsection{Full stability result} \label{subsec:iteration}
	\begin{proof}[Proof of \Cref{thm:stability}]
		Let $\eps_1 \ll \eta \ll \eps_2$.
		The idea is simply to iterate \Cref{lem:ind-step}. 
		We will find graphs $G_1, \ldots, G_s$ and sets $Q_1, \ldots, Q_s$, satisfying the following conditions, for $k \in [s]$ (for convenience set $G_0 := G$, $Q_0 := \emptyset$ and $V := V(G)$).
		\begin{enumerate}[label = \rm(\arabic*)]
			\item \label{itm:full-stab-1}
			$G_k$ is a coloured graph on vertex set $V \setminus (Q_1 \cup \ldots \cup Q_{k-1})$.
			\item \label{itm:full-stab-2}
			$Q_k$ is a blue clique in $G_k$, all other edges incident with $Q_k$ in $G_k$ are red and point towards $Q_k$.
			\item \label{itm:full-stab-3}
			$\big||Q_k| - \beta|G_k|\big| \le \eta|G_k|$.
			\item \label{itm:full-stab-4}
			$G_k$ and $G_{k-1} \setminus Q_{k-1}$ differ on at most $\eta |G_{k}|^2$ edges.
			\item \label{itm:full-stab-5}
			$c(G_k) \ge c(G_{k-1} \setminus Q_{k-1})$.
			\item \label{itm:full-stab-6}
			$c(G_k \setminus Q_k) \ge f(|G_k \setminus Q_k|) - \eps_1 n^3$.
		\end{enumerate}
		To see how such a sequence can be built, suppose that $G_1, \ldots, G_{k-1}$ and $Q_1, \ldots, Q_{k-1}$ are defined and satisfy the above conditions. If $|G_{k-1} \setminus Q_{k-1}| \le \eta n$, we stop the process and set $s := k-1$. Otherwise, we apply \Cref{lem:ind-step} to the graph $G_{k-1} \setminus Q_{k-1}$. Notice that by \ref{itm:full-stab-6} and the assumption on $|G_{k-1} \setminus Q_{k-1}|$, we have $c(G_k \setminus Q_k) \ge f(|G_k \setminus Q_k) - \eps_1 \eta^{-3} |G_k \setminus Q_k|^3$. Since $\eps_1 \eta^{-3} \ll \eta$, the lemma is applicable. The lemma produces a graph $G_k$ on vertex set $V(G_{k-1}) \setminus Q_{k-1} = V \setminus (Q_1 \cup \ldots \cup Q_{k-1})$ satisfying items \ref{itm:full-stab-1} to \ref{itm:full-stab-5}. It remains to verify \ref{itm:full-stab-6}.
		Note that
		\begin{align*}
			c(G_k) = \binom{|Q_k|}{2} \cdot |G_k \setminus Q_k| + c(G_k \setminus Q_k).
		\end{align*}
		Also
		\begin{align*}
			c(G_k) 
			\ge c(G_{k-1} \setminus Q_{k-1}) 
			& \ge f(|G_{k-1} \setminus Q_{k-1}|) - \eps_1 n^3 \\
			& = f(|G_k|) - \eps_1 n^3 \\
			& \ge \binom{|Q_k|}{2}|G_k \setminus Q_k| + f(|G_k \setminus Q_k|) - \eps_1 n^3,
		\end{align*}
		where the last inequality follows from the definition of $f$. The two inequalities imply \ref{itm:full-stab-6}.
		
		To finish, run a symmetrisation procedure on $G_s \setminus Q_s$ repeatedly, to obtain a graph $H$ whose vertices are partitioned into full blue clone-cliques $Q_{s+1}, \ldots, Q_t$ (arranged in decreasing size); the edges between any two of them point towards the larger clique; and $c(H) \ge c(G_t \setminus Q_t)$. Let $G'$ be the graph on vertex set $V$, such that $Q_1, \ldots, Q_t$ are blue cliques and the edges between any two of them are red and point towards the larger clique (note that $Q_1, \ldots, Q_t$ partition $V$).
		
		To complete the proof of \Cref{thm:stability}, we need to show that properties \ref{itm:stability-a} to \ref{itm:stability-c} hold.	
		For \ref{itm:stability-a}, define $G_k'$ to be the graph on vertex set $V$, obtained from $G'$ by replacing $V \setminus (Q_1 \cup \ldots \cup Q_{k-1})$ by a copy of $G_k$ (this makes sense due to \ref{itm:full-stab-1}).
		It is easy to see that $c(G_k') - c(G_{k-1}') = c(G_k) - c(G_{k-1} \setminus Q_{k-1}) \ge 0$ for $k \in [s]$, using \ref{itm:full-stab-5}. Similarly, $c(G') \ge c(G_s')$. Altogether, $c(G') \ge c(G_1') = c(G)$, as required for \ref{itm:stability-a}.
		
		Before continuing, we derive an upper bound on $s$. By \ref{itm:full-stab-3} we have $|Q_k| \ge 0.55|G_k|$ for $k \in [s]$, so $|G_k| \le 2^{-(k-1)}n$. Since $|G_t| \le \eta n$, this implies that $s \le 2\log(1/\eta) \le \eta^{-1/2}$, say.
		
		By \ref{itm:full-stab-4} we find that $G'$ and $G$ differ on at most $((s \eta + \eta)n^2 \le 2\eta^{1/2}n^2 \le \eps_2 n^2$ edges. Property \ref{itm:stability-b} follows.
		
		Notice that the estimate $|Q_k| \ge 0.55|G_k|$, which follows from \ref{itm:full-stab-3} implies $|Q_1| \ge \ldots \ge |Q_s|$. Thus \ref{itm:stability-c1} to \ref{itm:stability-c3} clearly hold. Finally, \ref{itm:stability-c4} holds trivially for $k > s$ and, for $k \le s$, it follows from~\ref{itm:full-stab-3} and $\eta \leq \eps_2$.
		\end{proof}
		
		\section{Hypergraphs with no short odd pseudocycles} \label{sec:diameter}
		
		In this section we leverage the stability result about cherries, \Cref{thm:stability}, and the connection between hypergraphs with no odd pseudocycles to good colourings (\Cref{thm:good-colouring}) to prove the following result regarding the structure of a dense hypergraph with no short odd pseudocycles. In case of cycles and pseudocycles, the \textit{length} (number of edges) and order (number of vertices) coincide, so, since there is no danger of confusion, we prefer the term \textit{length}. Given vertex sets $X_1, X_2, X_3 \subset V(\HH)$, an \textit{$X_1X_2X_3$-triple} in $\HH$ is an (unordered) edge $x_1x_2x_3 \in E(\HH)$ with $x_i \in X_i$ for $i \in [3]$.
		
		\begin{theorem} \label{thm:partition}
			Let $n \gg \ell \gg 1$.
			Let $\HH$ be a $3$-uniform hypergraph on $n$ vertices which contains no odd pseudocycles of length at most $\ell$, and which maximises the number of edges under these conditions.
			Then there is a partition $\{A, B\}$ of the vertices of $\HH$ into non-empty sets such that all $AAB$ triples are edges of $\HH$ (and there are no $AAA$ and $ABB$ triples).
		\end{theorem}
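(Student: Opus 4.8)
The plan is to translate the hypergraph problem into a problem about cherries in coloured graphs, apply the stability theorem \Cref{thm:stability} to get an approximate blow-up structure, and then upgrade it to the exact structure claimed, using the extremality of $\HH$.

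\textbf{Step 1: reduction to a coloured graph.} Since $\G_n$ is an $n$-vertex $3$-graph with no odd pseudocycles of any length and $e(\G_n)=f(n)$, maximality of $\HH$ gives $e(\HH)\ge f(n)$. Apply \Cref{prop:small-diameter} (with parameter $\ell/4$) to obtain $\HH'\subseteq\HH$ with $e(\HH')\ge e(\HH)-\eps n^3$ and diameter at most $\ell/4$. As $\HH'\subseteq\HH$ has no odd pseudocycle of length at most $\ell$, \Cref{prop:diam-cyc} forces $\HH'$ to have no odd pseudocycle at all. Now apply \Cref{thm:good-colouring} to each tight component of $\HH'$ and combine the resulting colourings: different components have disjoint shadows (a pair lying in edges of two components would tightly connect them), so this is well defined, and the remaining pairs of $K_n$ can be coloured arbitrarily (say blue). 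This produces a coloured graph $D$ on $V(\HH)$ in which every edge of $\HH'$ is a cherry. Hence, using \Cref{theorem_intro_falgas_ravry},
\[
 f(n)\ \ge\ c(D)\ \ge\ e(\HH')\ \ge\ e(\HH)-\eps n^3\ \ge\ f(n)-\eps n^3,
\]
so all but at most $\eps n^3$ cherries of $D$ are edges of $\HH$.

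\textbf{Step 2: approximate structure.} Apply \Cref{thm:stability} to $D$ to get a coloured graph $D'$ with $c(D')\ge c(D)$, differing from $D$ on at most $\eps_2 n^2$ edges, whose vertices partition into $Q_1,\dots,Q_t$ with the stated blow-up structure and $\big||Q_1|-\beta n\big|\le\eps_2 n$. Put $A:=Q_1$ and $B:=V(\HH)\setminus Q_1$. Inspecting the structure of $D'$ shows that a triple is a cherry of $D'$ precisely if it has two vertices in a common $Q_j$ and the apex in some $Q_i$ with $i>j$; in particular the cherries of $D'$ are exactly the triples with two vertices in $A$ and one in $B$, together with certain triples inside $B$, while no triple inside $A$ and no triple with exactly one vertex in $A$ is a cherry of $D'$. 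Since changing one edge of a coloured graph alters the cherry status of at most $n$ triples, $D$ and $D'$ share all but $O(\eps_2 n^3)$ cherries. Combining this with Step 1 and $e(\HH)-e(\HH')\le\eps n^3$, and writing $\delta:=\eps+O(\eps_2)$ (which may be taken as small as we wish since $\ell$ is large), we obtain: (i) all but at most $\delta n^3$ triples with two vertices in $A$ and one in $B$ are edges of $\HH$; and (ii) $\HH$ has at most $\delta n^3$ edges lying inside $A$ or having exactly one vertex in $A$.

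\textbf{Step 3: exact structure from extremality.} First re-clean the partition: put a vertex $v$ into $B$ if it forms an edge with more than half of the pairs inside $A$, and into $A$ otherwise. By (i)--(ii) this reclassifies at most $O(\sqrt\delta)\,n$ vertices, so $\big||A|-\beta n\big|$ stays small; moreover all but a $\mathrm{poly}(\delta)$ fraction of vertices and of vertex-pairs are \emph{good}, meaning they lie in few $AAA$/$ABB$ edges or non-edge $AAB$ triples. Next I show $\HH$ has no $AAA$ and no $ABB$ edge. Given such an edge $e$, build a pseudocycle through $e$ by greedily continuing with $AAB$-type edges, always extending through good vertices and good pairs (possible since a $\mathrm{poly}(\delta)$ fraction of $AAB$ triples are edges and we make only $O(\ell)\ll n$ steps); a short computation of the $A/B$-pattern — which is $3$-periodic along $AAB$-edges — shows the resulting pseudocycle can be made to have length $\not\equiv 0 \pmod 3$ and at most $\ell$, a contradiction. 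Once no $AAA$ or $ABB$ edge exists, $\HH=\{AAB\ \text{edges}\}\cup\HH[B]$; this hypergraph has a good colouring (colour pairs inside $A$ blue, $A$–$B$ pairs red oriented towards $A$, and use a good colouring of $\HH[B]$), and adding any missing $AAB$ triple preserves this good colouring, so creates no odd pseudocycle; by maximality every $AAB$ triple is already an edge. This gives the partition $\{A,B\}$ as required.

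\textbf{Main obstacle.} The delicate point is Step 3: because $\HH$ is only \emph{approximately} a blow-up, the greedy construction of a short odd pseudocycle through a putative $AAA$ or $ABB$ edge must avoid the (comparatively numerous) defect edges and pairs, so one has to set up the cleaning carefully enough that good vertices and good pairs are abundant and the greedy extension never gets stuck; handling the few edges meeting only ``bad'' pairs, which the greedy argument cannot directly reach, requires an additional extremality/switching step and is the technical heart of the proof.
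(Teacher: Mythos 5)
Your Steps 1--2 coincide with the opening of the paper's proof (pass to a bounded-diameter subhypergraph via \Cref{prop:small-diameter} and \Cref{prop:diam-cyc}, extract a good colouring via \Cref{thm:good-colouring}, compare cherry counts with \Cref{theorem_intro_falgas_ravry}, and apply \Cref{thm:stability}). The genuine gap is in Step 3, which is where essentially all the work in the paper lies. Your plan is to take any single $AAA$ or $ABB$ edge and close it into a short odd pseudocycle by greedily walking through ``good'' vertices and pairs; but this only works when the offending edge is itself incident to vertices whose links conform to the approximate structure, and nothing in the stability output prevents the few bad edges from sitting exactly at vertices with completely atypical links (or, for an $ABB$ edge, at a $BB$ pair that is only extendable through the recursive structure inside $B$, where your ``$3$-periodic $A/B$ pattern'' computation no longer applies). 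You acknowledge this yourself and defer it to ``an additional extremality/switching step'', but that step is precisely the theorem's technical core and is not supplied. The paper resolves it in two stages that your sketch does not contain: first, \Cref{claim:structure} shows that \emph{every} vertex $u$ (including the atypical ones) has $N(u)[A]$ or $N(u)[A,B]$ almost complete --- this needs the auxiliary hypergraphs $\F,\Fp,\Fp_1,\Fp_2$, the path-lifting statement \eqref{eqn:F-to-H}, and the near-regularity propositions \Cref{prop:min-deg,prop:max-deg}, which encode the extremality of $\HH$ at the level of individual vertex degrees; second, rather than finding a short odd pseudocycle through each bad edge, the paper performs a global switch (delete all $\As\As\As$ and $\As\Bs\Bs$ edges, add all missing $\As\As\Bs$ triples), checks this preserves the no-short-odd-pseudocycle property, derives $s \le t_1+t_2$ from maximality, and then uses short-pseudocycle counting arguments only to show each bad edge forces many missing good triples ($t_1 \le \eps_7 s$, $t_2 \le 2s/3$), forcing $s=t_1=t_2=0$.

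Two smaller points. Your final maximality step asserts that $\{AAB\text{ edges}\}\cup\HH[B]$ has a good colouring ``using a good colouring of $\HH[B]$'', but $\HH[B]$ is only known to have no odd pseudocycles of length at most $\ell$, not to have a good colouring (it may contain long odd pseudocycles); the correct argument is the paper's: any pseudocycle through an added $AAB$ triple consists entirely of edges with exactly two vertices in $A$, hence has length divisible by $3$. Also, your re-cleaning rule (``move $v$ to $B$ if it meets more than half the pairs inside $A$'') gives control only over most vertices, whereas the conclusion you need concerns \emph{all} vertices; this is exactly the gap that \Cref{claim:structure} fills and that a simple majority-vote cleaning cannot.
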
 
		
		By iterating the above result, we prove Theorem~\ref{thm:pseudocycles}, restated here, which gives an upper bound on the number of edges in a hypergraph with no short odd pseudocycles. 
		\thmPseudocycle*

		Recall that \Cref{thm:pseudocycles} is tight, up to the additive $O(1)$ error term, as evidenced by $\HH(x_1, \ldots, x_k)$ for a suitable choice of $x_i$'s. 
		
		We next show how \Cref{thm:pseudocycles} implies our main result, \Cref{thm:single-cycle}, restated here. 
		\thmSingleCycle*
		Recall that the \emph{$t$-blow-up} of an $r$-uniform hypergraph $\HH$, denoted $\HH[t]$, is the hypergraph with vertex set $V(\HH) \times [t]$ and edges all $r$-sets $\{(x_1, i_1), \ldots, (x_r, i_r)\}$ such that $\{x_1, \ldots, x_r\} \in E(\HH)$. 
		For a family $\Fc$ of hypergraphs, we denote by $\Fc[t]$ the family of $t$-blow-ups of members of $\Fc$. Recall that \Cref{theorem_blow_up} (whose proof can be found in \cite{keevash11}) asserts that taking the $t$-blow-up of a hypergraph does not change its Tur\'an density. The following generalisation for finite families of hypergraphs can be proved similarly.
		\begin{theorem}[\cite{keevash11}, Theorem 2.2] \label{thm:ex-blowup}
			Let $s$ and $t$ be integers, and let $\Fc$ be a family of $r$-graphs with $|\Fc|\leq s$. Then $\pi(\Fc[t]) = \pi(\Fc)$.
		\end{theorem}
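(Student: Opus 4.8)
The plan is to establish the two inequalities $\pi(\Fc[t]) \ge \pi(\Fc)$ and $\pi(\Fc[t]) \le \pi(\Fc)$ separately, following the single-hypergraph argument of \cite{keevash11} and inserting a pigeonhole step to handle the fact that $\Fc$ is a family. The lower bound is immediate: every $F \in \Fc$ embeds into its blow-up $F[t]$ (take one vertex from each class of the blow-up), so any $\Fc$-free $r$-graph is also $\Fc[t]$-free; hence $\ex(n,\Fc[t]) \ge \ex(n,\Fc)$ for all $n$, and dividing by $\binom{n}{r}$ and letting $n \to \infty$ gives $\pi(\Fc[t]) \ge \pi(\Fc)$.

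For the upper bound I would argue by contradiction using supersaturation. Suppose $\pi(\Fc[t]) > \pi(\Fc)$ and fix $\delta > 0$ with $\pi(\Fc[t]) > \pi(\Fc) + \delta$; then for arbitrarily large $n$ there is an $n$-vertex $\Fc[t]$-free $r$-graph $G$ with $e(G) \ge (\pi(\Fc) + \delta)\binom{n}{r}$. By supersaturation for finite families (average over $m$-vertex subsets for a large constant $m$: a positive fraction of them have density exceeding $\ex(m,\Fc)/\binom{m}{r}$ and so span a copy of some member of $\Fc$, and one then counts incidences), $G$ contains at least $c\,n^{v}$ copies of some fixed $F \in \Fc$, where $v = |V(F)|$ and $c = c(\delta,\Fc) > 0$; here the bound $|\Fc| \le s$ is what lets us pass from ``some member of $\Fc$'' to a single $F$. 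Fixing a labelling $V(F) = \{u_1,\dots,u_v\}$, this yields at least $c\,n^v$ embeddings $\phi \colon V(F) \to V(G)$.

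Next I would pass to a partite configuration so as to align these embeddings. Choose a uniformly random partition $V(G) = V_1 \cup \dots \cup V_v$; with positive probability all parts have size $(1+o(1))n/v$ and at least $c'\,n^v$ of the embeddings $\phi$ satisfy $\phi(u_i) \in V_i$ for every $i$, where $c' = c'(\delta,\Fc) > 0$. Fix such a partition and form the auxiliary $v$-uniform, $v$-partite hypergraph $\mathcal{A}$ with parts $V_1,\dots,V_v$ whose edges are the $v$-sets $\{w_1,\dots,w_v\}$, $w_i \in V_i$, for which $u_i \mapsto w_i$ is an embedding of $F$ into $G$. Then $\mathcal{A}$ has $\Omega(n^v)$ edges, so by the Erd\H{o}s box theorem (every sufficiently dense $v$-uniform hypergraph contains a complete $v$-partite sub-hypergraph with all parts of size $t$, applied in its $v$-partite form) $\mathcal{A}$ contains sets $W_i \subseteq V_i$ with $|W_i| = t$ such that every transversal of $W_1,\dots,W_v$ is an edge of $\mathcal{A}$, i.e.\ an embedding of $F$ into $G$.

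Finally I would verify that $W_1 \cup \dots \cup W_v$ spans a copy of $F[t]$ in $G$, contradicting $\Fc[t]$-freeness. Write $W_i = \{w_i^{(1)},\dots,w_i^{(t)}\}$ and define the injection $(u_i,j) \mapsto w_i^{(j)}$ from $V(F[t])$ into $V(G)$. For an edge $\{(u_{i_1},j_1),\dots,(u_{i_r},j_r)\}$ of $F[t]$, the indices $i_1,\dots,i_r$ are pairwise distinct (edges of $F$ are $r$-element sets), so there is a transversal $w_1,\dots,w_v$ of $W_1,\dots,W_v$ with $w_{i_m} = w_{i_m}^{(j_m)}$ for each $m$; since this transversal is an embedding of $F$ and $\{u_{i_1},\dots,u_{i_r}\} \in E(F)$, its image $\{w_{i_1}^{(j_1)},\dots,w_{i_r}^{(j_r)}\}$ lies in $E(G)$. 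Hence $G \supseteq F[t] \in \Fc[t]$, a contradiction, so $\pi(\Fc[t]) \le \pi(\Fc) + \delta$ for every $\delta > 0$, giving $\pi(\Fc[t]) \le \pi(\Fc)$. The main obstacle is precisely this alignment: supersaturation and the Erd\H{o}s box theorem are routine inputs, but one must set up $\mathcal{A}$ so that its complete multipartite sub-hypergraph translates \emph{verbatim} into $F[t]$, and the point that makes this work is exactly that the vertices of each edge of $F$ are distinct, so every assignment of ``layers'' $j_1,\dots,j_r$ along a fixed edge can be realised simultaneously by a single transversal.
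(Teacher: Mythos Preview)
Your argument is correct. The paper does not actually give its own proof of this statement---it cites Keevash for the single-hypergraph case and only remarks that the finite-family version ``can be proved similarly''---and your supersaturation-plus-Erd\H{o}s-box argument, with the pigeonhole step over the at most $s$ members of $\Fc$, is precisely that standard proof.
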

		
		To prove \Cref{thm:single-cycle}, we will note that an odd cycle $C^{(3)}_m$ is contained in an \textit{$m$-blow-up} of any odd pseudocycle of length at most $m/2$, and apply the last theorem.
		
		\begin{proof}[Proof of Theorem~\ref{thm:single-cycle} using \Cref{thm:pseudocycles}]
			Let $m$ be an integer with $m \geq 2L$ and $3 \nmid m$, where $L$ is the constant from Theorem~\ref{thm:pseudocycles}. Recall that $f(n)  = (2\sqrt{3} - 3 + o(1)) \binom n3$. Let $\eps >0$ and let $\HH$ be an $n$-vertex 3-uniform hypergraph with 
			$e(\HH) \geq (2\sqrt{3} - 3 + \eps) \binom n3$ and $n$ sufficiently large. We claim that $\HH$ contains a copy of $C^{(3)}_m$.
			
			Theorem~\ref{thm:pseudocycles} and Theorem~\ref{thm:ex-blowup} imply that $\HH$ contains $F[m]$ for some $\ell$-pseudocycle $F$ with $\ell \leq L$ and $3 \nmid \ell$. It suffices to show that $F$ contains an $m$-pseudocycle, because then $C_m^{(3)}$ will be contained in $F[m]$. To see this, let $v_1 \dots v_\ell$  be an ordering of $V(F)$ such that $v_i v_{i+1} v_{i+2} \in E(F)$, with the indices taken modulo $\ell$.
			
			In case $m \modthree{\ell}$, consider the sequence $$(v_1 v_2 v_3) ^{\frac{m-\ell}{3}}v_1 v_2 \dots v_\ell,$$
			where $(v_1 v_2 v_3)^x$ stands for $x$ repetitions of the sequence $v_1 v_2 v_3$.
			This is a sequence of order $m$ certifying that $F$ contains an $m$-pseudocycle.
			
			Otherwise, if $m \modthree {2\ell}$, the same is certified for instance by the sequence
			\begin{equation*}
				(v_1 v_2 v_3) ^{\frac{m-2\ell}{3}} (v_1 v_2 \dots v_\ell) ^2. \qedhere 
			\end{equation*}
		\end{proof}
		
		All that remains now is to prove \Cref{thm:partition}. We will state and prove some preliminary results in the following subsection, and then prove the theorem in \Cref{subsec:proof-partition}.
		
		\subsection{Preparation}
		The \emph{diameter} of a hypergraph $\HH$ is the minimum $\ell$ such that the following holds: for every $x, y, z, w \in V(\HH)$ (where $x, y$ are distinct and $z, w$ are distinct) whenever there is a pseudopath from $xy$ to $zw$, there is such a pseudopath of order at most $\ell$.
		
		We have already shown that $n$-vertex hypergraphs with no odd pseudocycles have at most $f(n)$ edges. To prove the same for pseudocycles of bounded length, we will pass to a subhypergraph with bounded diameter, which is the purpose of the following two propositions.
		
		\begin{proposition} \label{prop:diam-cyc}
			Let $\HH$ be a 3-uniform hypergraph of diameter $\ell \geq 4$. If $\HH$ has an odd pseudocycle, then it has an odd pseudocycle of length at most $4 \ell$.
		\end{proposition}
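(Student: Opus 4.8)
The plan is to recast the statement in terms of an auxiliary digraph $D$ whose vertices are the ordered pairs $(a,b)$ of distinct vertices of $\HH$, with an arc $(a,b)\to(b,c)$ exactly when $\{a,b,c\}\in E(\HH)$. A pseudopath $u_1\ldots u_k$ of $\HH$ then corresponds to a directed walk $(u_1,u_2)\to(u_2,u_3)\to\cdots\to(u_{k-1},u_k)$ of length $k-2$, and a pseudocycle of length $m$ to a closed walk of length $m$; conversely, since $D$ has no loops and no closed walks of length $2$ (either would require a repeated vertex inside an edge), every closed walk of $D$ has length at least $3$ and yields a pseudocycle of the same length. Thus it suffices to produce a closed walk of $D$ of length at most $4\ell$ whose length is not divisible by $3$, and the diameter hypothesis says exactly that any two coordinates-distinct vertices of $D$ joined by a walk are joined by one of length at most $\ell-2\le\ell$.

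I would argue by contradiction: assume every closed walk of $D$ of length at most $4\ell$ has length divisible by $3$. Let $C=v_1\ldots v_m$ be a \emph{shortest} odd pseudocycle of $\HH$ and $W$ the corresponding closed walk of length $m$ based at $p:=(v_1,v_2)$; we may assume $m>4\ell$, since otherwise $C$ itself is the required short pseudocycle. For $0\le j\le m$ let $r_j$ be the vertex of $D$ reached after the first $j$ arcs of $W$, so $r_0=r_m=p$, the first $j$ arcs form a walk $p\to r_j$ of length $j$, and the last $m-j$ arcs form a walk $r_j\to p$ of length $m-j$. If $r_j=p$ for some $0<j<m$, then $W$ splits at $r_j$ into two closed walks of lengths summing to $m$, and since $3\nmid m$ one of them is an odd pseudocycle shorter than $C$ --- impossible. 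Hence $r_j\neq p$ for $0<j<m$, so by the diameter hypothesis I may fix, for each such $j$, walks $P_j\colon p\to r_j$ and $Q_j\colon r_j\to p$, each of length at most $\ell$; write $\phi_j$ for the residue of $|Q_j|$ modulo $3$.

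The key point is then to pin down $\phi_j$ by applying the contradiction hypothesis to suitable short closed walks built from bounded pieces of $W$ and one or two of these shortcuts. For $1\le s\le 2\ell$ with $0<s<m$, the closed walk formed by the first $s$ arcs of $W$ followed by $Q_s$ has length $s+|Q_s|\le 3\ell$, forcing $\phi_s\equiv -s\pmod 3$. For $1\le s\le 2\ell$ with $0<j<j+s<m$, the closed walk given by $P_j$, then the arcs of $W$ from $r_j$ to $r_{j+s}$, then $Q_{j+s}$ has length $\le\ell+2\ell+\ell=4\ell$, while the closed walk $P_j$ then $Q_j$ has length $\le 2\ell$; combining the two resulting congruences gives $\phi_{j+s}\equiv \phi_j-s\pmod 3$. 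Writing any $j$ with $0<j<m$ as a sum of terms in $[1,2\ell]$ and iterating (all partial sums lie in $(0,m)$), I obtain $\phi_j\equiv -j\pmod 3$ for every $0<j<m$. Finally, taking $j=m-1$, the closed walk $P_j$ followed by the single remaining arc of $W$ from $r_j$ to $r_m=p$ has length $|P_j|+(m-j)\le 3\ell$, forcing $|P_j|+(m-j)\equiv 0\pmod 3$; but $|P_j|\equiv -|Q_j|\equiv -\phi_j\equiv j\pmod 3$ (using the closed walk $P_j$ then $Q_j$ again), so $m\equiv j+(m-j)\equiv 0\pmod 3$, contradicting $3\nmid m$. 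All the closed walks used here genuinely have length at least $3$ (using that $D$ has no closed walks of length $1$ or $2$), so the hypothesis applies to each.

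The conceptual obstacle --- and the reason one cannot simply ``compress'' the odd pseudocycle directly --- is that substituting a shortcut for a long stretch of $W$ changes the length in an uncontrolled way modulo $3$. The remedy is never to use a single shortcut's length on its own, but to write down $\Theta(m/\ell)$ closed walks of length at most $4\ell$, all of which are forced to have length divisible by $3$ under the contradiction hypothesis, arranged so that their lengths telescope and the non-divisibility of $m$ by $3$ re-emerges at the end. The budget $4\ell$ is precisely what allows each step of the telescoping to take the shape ``shortcut into $W$ (length $\le\ell$), advance $2\ell$ along $W$, shortcut back (length $\le\ell$)''; a smaller budget would merely slow the advance, while a budget of at most $2\ell$ would permit no progress along $W$ at all.
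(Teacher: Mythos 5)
Your argument is correct, and it reaches the conclusion by a genuinely different route from the paper. The paper also starts from a shortest odd pseudocycle $C$ and assumes it is long, but then performs a single local surgery: writing $C = xy\, v_1\ldots v_k\, ab\, u_1\ldots u_t$ with $k \ge \ell$ and $t \ge 2\ell$, it takes one short pseudopath $P$ from $xy$ to $ab$ (order at most $\ell$), uses the pseudocycle consisting of $P$ followed by $u_1\ldots u_t$ (shorter than $C$, hence of length divisible by $3$ by minimality) to pin down the length of $P$ modulo $3$, and then closes the other arc $v_1\ldots v_k$ with the reversed, doubled path $\tilde{P}$ from $ab$ back to $xy$, obtaining an odd pseudocycle shorter than $C$ --- a contradiction, and in fact the slightly better bound $3\ell+3$. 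You avoid the reversal gadget $\tilde{P}$ altogether: since the long cycle itself supplies walks in both directions, the diameter hypothesis hands you shortcuts $P_j$ and $Q_j$ both ways, and the short closed walk $P_jQ_j$ is exactly what ties their lengths together modulo $3$; you then propagate the residue $\phi_j \equiv -j$ along the cycle through roughly $m/(2\ell)$ auxiliary closed walks of length at most $4\ell$, using the minimality of $C$ only to rule out the base pair being revisited. The paper's surgery is more economical (one shortcut, two auxiliary cycles, bound $3\ell+3$), while your telescoping argument trades the $\tilde{P}$ trick for the full $4\ell$ budget and an induction along the cycle; both proofs rest on the same two ingredients, namely diameter shortcuts and mod-$3$ bookkeeping against a shortest odd pseudocycle. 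Your translation to the pair digraph is also accurate: consecutive vertices of a pseudopath or pseudocycle are distinct because they lie in a common edge, closed walks of length $1$ or $2$ are impossible, and a pseudopath of order $t$ corresponds to a walk of length $t-2$, so the diameter bound $\ell$ indeed yields shortcut walks of length at most $\ell-2$, comfortably within the budgets you use.
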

		
		\begin{proof}
			Let $C$ be the shortest odd pseudocycle in $\HH$. Assuming that its length is at least $3\ell +4$, we may index it by $xy v_1 \ldots v_k ab u_1 \ldots u_t$ with $t \geq 2 \ell$, $k \geq \ell $. Note that the length of $C$ is $k+t+4 \not\equiv 0 \pmod{3}$.
			
			Since $\HH$ contains a pseudopath from $xy$ to $ab$, it also contains such a pseudopath  $P = xy w_1 \ldots w_r ab$ with $r \leq \ell -4$.	
			The pseudocycle $xy w_1 \ldots w_r ab u_1 \ldots u_t$ is shorter than $C$, so it must not be odd, that is, $r+t+4 \modthree{0}$.
			
			Now consider the pseudocycle $C_1 = v_1 \ldots v_k \tilde{P}$. Recall that $\tP$ is a $(2r+6)$-vertex pseudopath from $ab$ to $xy$ (see \eqref{eqn:tilde}), so $C_1$ is indeed a pseudocycle.
			The length of $C_1$ is $k + 2r +6 \equiv k-r \equiv k+t+4 \not\equiv 0 \pmod{3}$. Noting that $k+2r +6 \leq k + 2\ell - 2 \le k + t$, this contradicts the minimality of $C$.
		\end{proof} 
		
		\begin{proposition} \label{prop:small-diameter}
			Let $ 1/\ell \ll \eps \ll 1$, and let $\HH$ be an $n$-vertex hypergraph.
			Then there is a subgraph $\HH' \subseteq \HH$ with $e(\HH') \ge e(\HH) - \eps n^3$ whose diameter is at most $\ell$.
		\end{proposition}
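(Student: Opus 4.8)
The plan is to build $\HH'$ by a greedy/iterative deletion argument, repeatedly discarding all edges that lie in a ``small but poorly-connected'' piece of the tight-connectivity structure, until what remains has small diameter. The natural object to track is the equivalence relation on ordered pairs of distinct vertices given by ``$xy$ and $zw$ are joined by a pseudopath''; its classes partition the directed pairs, and within each class one can ask for the pairwise pseudopath-distance. The diameter of $\HH$ is large precisely when some class contains two pairs at distance more than $\ell$. So I would first reduce to a single tightly connected component: if $\HH$ has components $\HH_1, \ldots, \HH_k$ (in the tight-connectivity sense) then $e(\HH) = \sum_i e(\HH_i)$, the diameter of $\HH$ is the max of the diameters of the $\HH_i$, and it suffices to bound each component; a component on $m$ vertices has at most $\binom{m}{3}$ edges, so components of size $\le \eps^{1/2} n$ contribute at most $O(\eps^{1/2} n \cdot n^2)$ edges in total and can be deleted outright (rescaling $\eps$).

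The main step is then: within a tightly connected hypergraph, if the diameter exceeds $\ell$, find a small set of edges whose removal strictly reduces the number of ``far'' pairs, in a way that can only be iterated $O(n^2/\ell^2)$ times, say, before the diameter drops below $\ell$, and so that the total number of edges deleted is $\le \eps n^3$. Concretely, I would run a BFS-type exploration: fix a pair $xy$, and for $d = 0, 1, 2, \ldots$ let $L_d$ be the set of pairs reachable from $xy$ by a pseudopath of order exactly $d$ but not fewer (the ``$d$-th level''). If all reachable pairs lie within $O(\ell)$ levels we are fine for $xy$; otherwise, since there are at most $n^2$ pairs total, some window of $\le \ell/10$ consecutive levels contains at most $(10/\ell)\cdot n^2$ pairs, hence the edges using only vertices appearing in those levels number at most $O(n/\ell)\cdot n^2 = O(n^3/\ell) \le \eps n^3/2$ (for $\ell \gg \eps^{-1}$); deleting those edges disconnects the ``far side'' from $xy$ in the pseudopath sense. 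One has to check that removing these edges does not create *new* far pairs elsewhere — this holds because deleting edges only removes pseudopaths, never creates them, so the pseudopath-reachability relation only coarsens and pseudopath-distances only increase or become infinite; thus the set of pairs still reachable from any given pair within distance $\le \ell$ never shrinks in a way that forces a new deletion, and each deletion step permanently separates a nonempty chunk of vertices. A cleaner bookkeeping: iterate ``pick any pair at pseudopath-distance $> \ell$ from some other pair in its class; by the level-counting argument delete $\le \eps n^3 / (\text{number of iterations})$ edges to split its class'' — but controlling the number of iterations is the delicate point, so I would instead phrase it as a single simultaneous deletion.

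The slick formulation I would actually write: define, for each unordered edge $e = uvw$, a weight via how ``spread out'' the levels of its vertices are, then observe $\sum_e (\text{something}) \le \ell \cdot n^2$-type inequalities forcing a good cut to exist, delete once, and induct on the number of vertices or on $n^2$ minus the number of intra-class pairs within distance $\ell$. The \textbf{main obstacle} is exactly this: ensuring the deletions terminate after enough edges removed that the total stays below $\eps n^3$, i.e.\ getting the two quantifiers ($1/\ell \ll \eps$) to interact correctly — one needs each deletion to be cheap *relative to how many deletions can occur*, and the averaging over $\le \ell$-wide windows of BFS levels (of which there are $\le n^2/\ell$ disjoint ones before exhausting all pairs) is what makes "cheap per step" $\times$ "few steps" $= O(n^3/\ell) \le \eps n^3$ work. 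Everything else — passing to components, the fact that edge-deletion only increases pseudopath distances, bounding $e(\HH_i) \le \binom{|V(\HH_i)|}{3}$ — is routine.
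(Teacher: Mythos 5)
Your plan has a genuine gap at exactly the point you flag as ``the main obstacle,'' and the argument you offer to paper over it is incorrect. After you delete the edges cutting off the far side of one BFS (from one root pair $xy$), you appeal to monotonicity: ``deleting edges only removes pseudopaths, so no new far pairs are created.'' But deletion makes pseudopath distances \emph{increase}, and a ``far pair'' is a pair that is still reachable yet at distance more than $\ell$; that set is not monotone under deletion, and new far pairs can certainly appear. So the process must be iterated over an uncontrolled number of root pairs, and the accounting ``cheap per step $\times$ few steps $= O(n^3/\ell)$'' is never established --- with per-step cost of order $n^3/\ell$ and no bound on the number of steps, nothing keeps the total below $\eps n^3$. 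The per-step bound itself is also shaky: you only know the BFS from $xy$ extends beyond $\ell$ levels, so pigeonholing over windows inside the first $\ell$ levels gives a level (or window) with at most about $n^2/\ell$ pairs, hence about $n^3/\ell$ edges to delete --- fine for one step, but again only for one step. Finally, the preliminary reduction to tight components is both unnecessary (the diameter only constrains pairs that \emph{are} joined by a pseudopath) and unjustified as stated: tight components are edge-disjoint but need not be vertex-disjoint, so you cannot sum $\binom{m_i}{3}$ against a budget of $\eps^{1/2}n\cdot n^2$ the way you do.

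The paper avoids iteration entirely by a different mechanism: delete edges only through \emph{sparse pairs}. Repeatedly, if some pair $uv$ has codegree less than $\eps n$ in the current hypergraph, delete all edges containing $uv$; since each pair loses its edges at most once, the total loss is at most $\eps n\binom n2\le \eps n^3$. The point is that the resulting hypergraph $\HH'$ has bounded diameter \emph{automatically}, with no further surgery: along a shortest pseudopath $uv\,x_0x_1\cdots x_t\,u'v'$, the sets $B_i$ of ordered pairs $ab$ with $x_ix_{i+1}ab$ a tight path satisfy $|B_i|\ge \eps^2n^2/2$ (two applications of the codegree condition), and the sets $B_{10i}$ are pairwise disjoint, because a common pair $ab\in B_{10i}\cap B_{10j}$ yields a short detour $x_ix_{i+1}ab\,x_{j+1}a\,x_jx_{j+1}$ contradicting minimality of the path. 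Packing disjoint sets of size $\eps^2n^2/2$ into $n^2$ pairs forces $t\le 20/\eps^2$, so the diameter is at most $20/\eps^2+4\le\ell$. In short, the missing idea is to make each pair's deletion a one-shot event tied to a local density threshold, so that small diameter is a consequence of the surviving codegrees rather than something enforced by repeated cutting.
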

		
		\begin{proof}
			First we form a subgraph $\HH' \subseteq \HH$ in which each vertex pair has codegree either $0$ or at least $\eps n$, as follows. If there are vertices $u, v$ whose codegree in the \emph{current} hypergraph is smaller than $\eps n$, delete all edges containing $uv$. Repeat this step until each pair has codegree degree either $0$ or at least $\eps n$. Denote the resulting hypergraph by $\HH'$. Observe that  the number of deleted edges is at most $\eps n \cdot \binom n2$ since the edges containing each pair were removed at most once. Hence $e(\HH') \geq e(\HH)-\eps n^3$.
			
			Given ordered pairs $uv$ and $u'v'$ which are connected by a pseudopath in $\HH'$, let $P=uv x_0 x_1 \dots x_t u'v'$ be a shortest such pseudopath. For each $i$, let $B_i$ be the set of ordered pairs $ab$ such that $x_ix_{i+1}ab$ is a tight path in $\HH'$. We claim that the sets $B_{10i}$ are mutually disjoint for $0\leq i < \frac{t}{10}$. Suppose not, and take $ab \in B_{10i} \cap B_{10j}$ for some $0 \le i<j < \frac{t}{10}$. Then $x_ix_{i+1}ab x_{j+1}a x_j x_{j+1}$ is a pseudopath  with only five vertices between $x_i$ and $x_j$, which can be used to form a shorter pseudopath  than $P$ connecting $uv$ and $u'v'$, contradiction.
			Now since $|B_i| \geq \eps^2 n^2/2$ for every $i$ (using the fact that the codegree of each pair in $\HH'$ is either 0 or at least $\eps n$), we have
			$$
			\floor{\frac{t}{10}}\cdot \frac{\eps^2 n^2}{2} \leq n^2,
			$$
			so $t \leq \frac{20}{\eps^2}$.
			Hence the diameter of $\HH'$ is at most $\ell:=\frac{20}{\eps^2}+4$, as required.
		\end{proof}
		
		As alluded to in \Cref{sec:overview}, we can already prove \Cref{cor:weak-main}, restated here, which is a weakening of \Cref{thm:pseudocycles}, with only an asymptotic upper bound, which depends on $\ell$, on the number of edges. 
		\corWeak*
		This bound will be used in the proof of \Cref{prop:max-deg}. Note that the analogous bound on the extremal number of proper odd tight cycles follows from \Cref{thm:ex-blowup}.
		
		\begin{proof}[Proof of \Cref{cor:weak-main}]
			Assume the opposite, that $e(\HH) \geq f(n) + \eps n^3$. Applying Proposition~\ref{prop:small-diameter} with the parameters $\ell/4$ and $\eps/2$, we obtain a hypergraph $\HH' \subseteq \HH$ with at least $f(n) + \eps n^3 /2$ edges whose diameter is at most $\ell/4$. $\HH'$ contains no odd pseudocycles of length at most $\ell$, so by \Cref{prop:diam-cyc}, it contains no odd pseudocycles. Hence we may apply Theorem~\ref{thm:good-colouring} to obtain a good colouring of $\partial \HH'$ with $e(\HH') > f(n)$ cherries, contradicting \Cref{theorem_intro_falgas_ravry}.
		\end{proof}
		
		The following proposition gives a near-optimal lower bound on the vertex degrees in a largest hypergraph on $n$ vertices with no short odd pseudocycles.
		
		\begin{proposition} \label{prop:min-deg} 
			Let $1/n \ll 1/\ell \ll \eps \ll 1$, and let $\HH$ be an $n$-vertex hypergraph with no odd pseudocycles of length at most $\ell$, which maximises the number of edges under these conditions. Then $d(u) \ge (3\alpha - \eps)n^2$ for every vertex $u$.
		\end{proposition}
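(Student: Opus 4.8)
The plan is to leverage the extremality of $\HH$: I will show that if some vertex had degree below $(3\alpha-\eps)n^2$, then one could delete it and reinsert it as a ``clone'' of a high-degree vertex, obtaining a hypergraph with strictly more edges that still avoids short odd pseudocycles — contradicting maximality.

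So suppose for contradiction that $d(u) < (3\alpha - \eps)n^2$ for some $u \in V(\HH)$. Put $\HH^- := \HH - u$; this is a hypergraph on $n-1$ vertices with $e(\HH^-) = e(\HH) - d(u)$ which, being a subhypergraph of $\HH$, has no odd pseudocycle of length at most $\ell$. Choose $w \in V(\HH^-)$ of maximum degree in $\HH^-$, and let $\HH'$ be obtained from $\HH^-$ by blowing up $w$ into two vertices $w$ and $u$, no edge of which contains both; that is, $V(\HH') = V(\HH)$ and $E(\HH') = E(\HH^-) \cup \{\,uab : wab \in E(\HH^-)\,\}$. Then $\HH'$ has $n$ vertices and, since $d_{\HH^-}(w)$ new edges are added and none removed,
\[ e(\HH') = e(\HH^-) + d_{\HH^-}(w) = e(\HH) - d(u) + d_{\HH^-}(w). \]

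Next I would verify that $\HH'$ still has no odd pseudocycle of length at most $\ell$. Let $\phi$ be the map on $V(\HH')$ fixing every vertex except $u$, with $\phi(u) = w$. Since no edge of $\HH'$ contains both $u$ and $w$, the map $\phi$ sends each edge of $\HH'$ (a $3$-set) to a $3$-set, which is moreover an edge of $\HH^-$. Hence the $\phi$-image of a pseudocycle of length $m$ in $\HH'$ is a genuine pseudocycle of length $m$ in $\HH^-$, so a short odd pseudocycle in $\HH'$ would yield one in $\HH^-$, a contradiction. This is the step that needs the most care, and it rests entirely on the design choice that $u$ and $w$ are joined by no edge — exactly what stops $\phi$ from collapsing an edge to a pair.

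Finally I would lower-bound $d_{\HH^-}(w)$. The iterated blow-up construction $\G_n$ of Example~\ref{ex:iterated-blow-up} has no odd pseudocycles at all: in any pseudocycle consecutive edges share two vertices and hence have the same ``type'', and then labelling each vertex of the pseudocycle according to whether it lies in $V_i \setminus V_{i+1}$ or in $V_{i+1}$, every three consecutive labels must be two of the former and one of the latter, forcing the length to be divisible by $3$ (alternatively, apply \Cref{thm:good-colouring} to the natural recursive good colouring of $\G_n$). By maximality of $\HH$ we therefore have $e(\HH) \ge e(\G_n) = f(n) = (\alpha + o(1))n^3$ by \Cref{prop:alpha}. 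Since $w$ has maximum degree in $\HH^-$,
\[ d_{\HH^-}(w) \;\ge\; \frac{3\,e(\HH^-)}{n-1} \;=\; \frac{3\bigl(e(\HH) - d(u)\bigr)}{n-1} \;\ge\; \frac{3\bigl(f(n) - d(u)\bigr)}{n-1}, \]
and substituting $d(u) < (3\alpha - \eps)n^2$ and $f(n) \ge (\alpha - \eps/4)n^3$ (valid since $1/n \ll \eps$), a short computation gives $d_{\HH^-}(w) > d(u)$. Hence $e(\HH') > e(\HH)$, contradicting the maximality of $\HH$, and the proposition follows. The only delicate ingredients are the pseudocycle-preservation argument above and the (routine) fact that the extremal construction avoids odd \emph{pseudo}cycles, not merely odd tight cycles.
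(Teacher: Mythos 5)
Your proof is correct and follows essentially the same route as the paper: there too, the key step is cloning one vertex's link onto another so that no edge contains both copies (the paper does this for every pair $u,v$, getting $d(v)\ge d(u)-n$ and hence near-regularity, before combining $e(\HH)\ge f(n)$ with an averaging bound, exactly as you do in one shot). Your explicit check that the iterated blow-up construction avoids odd \emph{pseudo}cycles, not merely odd tight cycles, is a detail the paper leaves implicit, but otherwise the two arguments coincide.
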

		
		\begin{proof}
			Given vertices $u$ and $v$ in $\HH$, consider the hypergraph $\HH_{uv}$ obtained from $\HH$ by removing all edges containing $v$ and then adding the edge $e - u + v$, for each edge $e$ that contains $u$ but not $v$. Observe that $\HH$ has no odd pseudocycles of length at most $\ell$; indeed, if there were such a cycle then we could replace each instance of $v$ by $u$ to obtain an odd pseudocycle of the same length in $\HH$ (whereby it is important that $\HH_{uv}$ has no edges containing both $u$ and $v$), a contradiction. Since $e(\HH_{uv}) \ge e(\HH) - d(v) + d(u) - n$ and by maximality of $\HH$, we have $d(v) \ge d(u) - n$. Since $u$ and $v$ were arbitrary, this implies that the maximum and minimum degrees of $\HH$ differ by at most $n$. In particular, using $e(\HH) \ge f(n) = \alpha n^3 + o(n^3)$, which follows from the maximality of $\HH$ and \Cref{prop:alpha},
			\begin{equation*}
				\delta(\HH) 
				\ge \frac{3e(\HH)}{n} - n \ge \frac{3f(n)}{n} - n \ge (3 \alpha - \eps)n^2.
				\qedhere
			\end{equation*}
		\end{proof}
		
		Next, we prove a stability version of the previous proposition.
		
		\begin{proposition} \label{prop:max-deg}
			Let $1/n \ll 1/\ell \ll \eps_1 \ll \eps_2 \ll 1$, and let $\HH$ be an $n$-vertex $3$-uniform hypergraph with no odd pseudocycles of length at most $\ell$. If $e(\HH) \ge f(n) - \eps_1 n^3$ then $d(u) \le (3\alpha + \eps_2)n^2$ for every vertex $u$.
		\end{proposition}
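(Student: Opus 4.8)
The plan is to mimic the degree‑symmetrisation argument used for \Cref{prop:min-deg}, but applied to a \emph{maximal} extension of $\HH$, and then to feed in the asymptotic upper bound on the number of edges supplied by \Cref{cor:weak-main}.

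First I would pass to a hypergraph $\HH^*$ on the vertex set $V(\HH)$ with $\HH \subseteq \HH^*$ that is maximal (has the most edges) subject to having no odd pseudocycle of length at most $\ell$; such an $\HH^*$ exists since there are only finitely many candidate edge sets, and clearly $d_\HH(u) \le d_{\HH^*}(u)$ for every $u$, so it suffices to bound $\Delta(\HH^*)$. Now fix arbitrary vertices $u,v$ and form the hypergraph $\HH^*_{uv}$ exactly as in the proof of \Cref{prop:min-deg}: delete every edge through $v$, and then add $e - u + v$ for each edge $e$ of $\HH^*$ with $u \in e$ and $v \notin e$. As observed there, $\HH^*_{uv}$ contains no edge meeting both $u$ and $v$, so replacing every occurrence of $v$ by $u$ turns any odd pseudocycle of $\HH^*_{uv}$ of length at most $\ell$ into one of $\HH^*$ of the same length; hence $\HH^*_{uv}$ also has no odd pseudocycle of length at most $\ell$. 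Counting edges gives $e(\HH^*_{uv}) \ge e(\HH^*) - d_{\HH^*}(v) + d_{\HH^*}(u) - n$, so by maximality of $\HH^*$ we obtain $d_{\HH^*}(u) \le d_{\HH^*}(v) + n$. Since $u,v$ were arbitrary, $\Delta(\HH^*) \le \delta(\HH^*) + n$.

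To finish, I would apply \Cref{cor:weak-main} to $\HH^*$ with a parameter $\eps$ chosen so that $1/\ell \ll \eps \ll \eps_2$ (possible because $1/\ell \ll \eps_2$ by hypothesis), obtaining $e(\HH^*) \le f(n) + \eps n^3$. Combining this with $\Delta(\HH^*) \le \delta(\HH^*) + n \le \frac{3e(\HH^*)}{n} + n$ and with $f(n) = \alpha n^3 + o(n^3)$ from \Cref{prop:alpha} yields $\Delta(\HH^*) \le 3\alpha n^2 + 3\eps n^2 + o(n^2) \le (3\alpha + \eps_2)n^2$ for $n$ large, whence $d_\HH(u) \le d_{\HH^*}(u) \le (3\alpha + \eps_2)n^2$ for every vertex $u$, as required.

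I do not expect a genuine obstacle here: the only delicate points are the (routine) check, imported verbatim from \Cref{prop:min-deg}, that the symmetrisation $\HH^*_{uv}$ does not create short odd pseudocycles, and the elementary arithmetic above. In particular the hypothesis $e(\HH) \ge f(n) - \eps_1 n^3$ seems to play no role in this approach and is presumably stated only because \Cref{prop:max-deg} is applied in a setting where it holds; an alternative, heavier route would be to extract the iterated blow‑up structure on $\HH$ via \Cref{thm:stability} and read off the degrees, but this runs into the difficulty that stability only controls the edit distance to the extremal configuration and not the degree of a single vertex, so the maximal‑extension argument is cleaner.
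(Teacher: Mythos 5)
Your argument has a genuine gap at the symmetrisation step, and your remark that the hypothesis $e(\HH) \ge f(n) - \eps_1 n^3$ ``seems to play no role'' is the tell-tale sign: without that hypothesis the statement is false. Take $\HH$ to be the full star at a vertex $u$, i.e.\ all $\binom{n-1}{2}$ triples containing $u$. In any pseudocycle of this hypergraph, every window of three consecutive positions contains $u$ exactly once, so the positions of $u$ advance in steps of $3$ and every pseudocycle has length divisible by $3$; yet $d(u) = \binom{n-1}{2} \approx n^2/2 > (3\alpha + \eps_2)n^2$. Since your proof never uses the density hypothesis, it would apply to this $\HH$ and yield a false conclusion, so some step must break.

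The step that breaks is $d_{\HH^*}(u) \le d_{\HH^*}(v) + n$. The hypergraph $\HH^*_{uv}$ deletes all edges through $v$, so it need not contain $\HH$; hence maximality of $\HH^*$ \emph{among supergraphs of $\HH$} gives no comparison between $e(\HH^*_{uv})$ and $e(\HH^*)$. (In \Cref{prop:min-deg} the hypergraph maximises the edge count over \emph{all} hypergraphs with no short odd pseudocycles, which is exactly what legitimises the symmetrisation there; you cannot have that and $\HH \subseteq \HH^*$ simultaneously.) In the star example this is not a technicality: the star is already edge-maximal (adding any triple $abc$ avoiding $u$ creates the odd pseudocycle $abcu$ of length $4$), so your $\HH^*$ is the star itself, with $\Delta(\HH^*) \approx n^2/2$ and $\delta(\HH^*) = n-2$ — the inequality $\Delta(\HH^*) \le \delta(\HH^*) + n$ fails badly, and indeed the symmetrised hypergraph $\HH^*_{uv}$ (a ``double star'') has about twice as many edges while containing no short odd pseudocycles. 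The paper's proof avoids this by using the lower bound on $e(\HH)$ directly: since $e(\HH) \le (\alpha+\eps_1)n^3$ by \Cref{cor:weak-main}, there are at least $\mu n$ vertices of degree at most $3(\alpha+\mu)n^2$; deleting $\mu n$ of them and replacing them by clones of the maximum-degree vertex $u$ preserves the absence of short odd pseudocycles, and because $e(\HH) \ge f(n) - \eps_1 n^3$, the cloned hypergraph would violate \Cref{cor:weak-main} unless $d(u) \le (3\alpha + \eps_2)n^2$. That cloning-versus-deletion trade-off is where the density hypothesis is essential, and it has no analogue in your maximal-extension route.
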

		
		\begin{proof}
			Let $\mu = \sqrt{\eps_1} \leq \eps_2/10$.
			Let $X$ be the set of vertices $x$ with $d(x) \le 3(\alpha + \mu)n^2$. Then $e(\HH) \ge (n - |X|)(\alpha + \mu)n^2$. By \Cref{cor:weak-main} (and the properties of $f(n)$) we also have $e(\HH) \le (\alpha + \eps_1)n^3$.
			Putting the two inequalities together, we get
			\begin{align*}
				& (\alpha + \eps_1)n^3 \ge (n - |X|)(\alpha + \mu)n^2 \\
				\Longrightarrow \quad
				& |X| \ge \frac{(\alpha + \mu)n - (\alpha + \eps_1)n}{\alpha + \mu} = \frac{\mu - \eps_1}{\alpha + \mu} \cdot n \ge \mu n.
			\end{align*}
			Let $u$ be a vertex of maximum degree in $\HH$, and let $X'$ be a subset of $X$ of size $t := \mu n$. We may assume $u \notin X'$ because otherwise $d_{\HH}(u) \le (3\alpha + 3\mu)n^2 \le (3\alpha + \eps_2)n^2$, as required.
			Now consider the hypergraph $\HH_1$ formed in two steps as follows. First, define $\HH_0 = \HH \setminus X'$; then $e(\HH_0) \ge e(\HH) - t \cdot 3(\alpha + \mu)n^2$ and $d_{\HH_0}(u) \ge d_{\HH}(u) - t n$. Second, let $\HH_1$ be the hypergraph obtained by adding $|X'|$ copies of $u$ to $\HH_0$. Then 
			\begin{align*}
				e(\HH_1) 
				& \ge e(\HH_0) + t \cdot d_{\HH_0}(u) \\
				& \ge e(\HH) - t \cdot 3(\alpha + \mu)n^2 + t \cdot (d_{\HH}(u) - tn) \\
				& \ge f(n) - \eps_1 n^3 + t \cdot (d_{\HH}(u) - tn - 3(\alpha + \mu)n^2) \\
				& = f(n) - \eps_1 n^3 + \mu n \cdot (d_{\HH}(u) - (3\alpha + 4\mu)n^2).
			\end{align*}
			Notice that $\HH_1$ has no odd pseudocycles of length at most $\ell$. Thus, by \Cref{cor:weak-main}, we have $e(\HH_1) \le f(n) + \eps_1 n^3$. Hence, using $\mu = \sqrt{\eps_1} \leq \eps_2 / 10$,
			\begin{equation*}
				d_{\HH}(u) 
				\le (3\alpha + 4\mu)n^2 + (2\eps_1/\mu)n^2
				\le (3\alpha + \eps_2)n^2,
			\end{equation*}
			as required. 
		\end{proof}

		\subsection{The structure of odd-pseudocycle-free graphs} \label{subsec:proof-partition}
		We now prove the main result in the section, \Cref{thm:partition}.
		The starting point of the proof uses the relation between hypergraphs with no odd pseudocycles and good colourings of $K_n$, as well as the stability result about cherries from the previous section, to conclude the following: there is a coloured graph $G$ with a nice structure such that almost all cherries in $G$ are triples in $\HH$ and vice versa. This readily implies the existence of a partition $\{A, B\}$ of the vertices such that $|A| \approx \beta n$ and for almost every vertex $u$ in $\HH$ the following holds: almost all vertices in $A$ are joined to almost all $A \times B$ pairs, and almost all vertices in $B$ are joined to almost all $A^{(2)}$ pairs. The main difficulty of the proof lies in showing that there is such a partition for which \textit{every} vertex in $A$ is joined to almost all pairs in $A \times B$, and similarly for vertices in $B$. This is achieved in \Cref{claim:structure} and the main idea is to compare several graphs obtained by modifying the triples containing a given vertex. Given a partition as above, to conclude the proof, we argue (using the fact that $\HH$ has no short odd pseudocycles) that the number of $AAB$ ``non-edges'' exceeds the number of $AAA$ and $ABB$ edges, unless all of these numbers are 0. The maximality of $\HH$ implies that all these numbers are indeed $0$, meaning that $\HH$ has all $AAB$ edges and no $AAA$, $ABB$ edges.
		
		\begin{proof}[Proof of \Cref{thm:partition}]
			Let $\eps_7 = 0.1$ and let $\eps_1, \ldots, \eps_6$, and $\ell$ satisfy 
			\begin{equation*}
				0 < 1/\ell \ll \eps_1 \ll \ldots \ll \eps_7. 
			\end{equation*}
			Let $\HH'$ be a subgraph of $\HH$ on the same vertex set with at least $e(\HH) - \eps_1 n^3$ edges, that has diameter at most $\ell/4$; such $\HH'$ exists by \Cref{prop:small-diameter}. By \Cref{prop:diam-cyc},  $\HH'$ has no odd pseudocycles, so by \Cref{thm:good-colouring}, there is a good colouring of $\partial\HH'$. 
			
			Extending the good colouring of $\partial\HH'$ arbitrarily to also cover vertex pairs which are not in the shadow, we obtain a coloured graph (recall that this is a complete graph whose edges are either blue or oriented and red) $G'$ on vertex set $V := V(\HH)$, such that every edge in $\HH'$ is a cherry in $G'$.
			By maximality of $\HH$, we have $c(G') \ge e(\HH') \ge e(\HH) - \eps_1 n^3 \ge f(n) - \eps_1 n^3$.
			
			Thus, by \Cref{thm:stability}, there is a graph $G$ satisfying \ref{itm:stability-a}--\ref{itm:stability-c} in \Cref{thm:stability} on vertex set $V$. That is, $G$ has at least as many cherries as $G'$, all but at most $\eps_2 n^3$ cherries in $G$ are cherries in $G'$, and $V$ can be partitioned into sets $X_1, \ldots, X_k$ such that: $G[X_i]$ is blue for $i \in [k]$; $|X_i| = (\beta \pm \eps_2 )n \cdot (|X_{i}| + \ldots + |X_k|)$ for $i \in [k]$; and all $X_i \times X_j$ pairs in $G$ are red and oriented towards $X_i$, for $1 \le i < j \le k$. Recall that $\beta=\frac{3 - \sqrt{3}}{2}$ was defined in \eqref{eq:alphabeta}.
			
			Define $X_{>i} := X_{i+1} \cup \ldots \cup X_k$, and define $X_{\ge i}$ analogously.
			Let $H$ be the subgraph of $G$ whose edges are either pairs in $X_i \times X_i$ that are in at least $(|X_{i+1}| + \ldots + |X_k|) - \eps_3 n$ triples in $(X_i \times X_i \times X_{>i}) \cap E(\HH)$, or pairs in $X_i \times X_j$, where $i < j$, that are in at least $|X_i| - \eps_3 n$ triples in $(X_i \times X_i \times X_j) \cap E(\HH)$. 
			
			Denoting the number of non-edges in $H$ by $\be(H)$, we have that the number of cherries in $G$ that are not edges in $\HH$ is at least $\be(H) \cdot \eps_3 n / 3$. 
			Recall that $e(\HH') \ge e(\HH) - \eps_1 n^3 \ge f(n) - \eps_1 n^3$ and that all edges in $\HH'$ are cherries in $G'$. But $c(G') \leq f(n)$ (by~Theorem~\ref{theorem_intro_falgas_ravry}), so all but $\eps_1 n^3$ cherries in $G'$ are edges in $\HH'$ and thus in $\HH$. Since there are at most $\eps_2 n^3$ cherries in $G$ that are not cherries in $G'$, it follows that all but at most $(\eps_1 + \eps_2)n^3 \le 2\eps_2 n^3$ cherries in $G$ are edges in $\HH$.
			Hence $\be(H) \cdot \eps_3 n / 3 \le 2\eps_2 n^3$, showing $\be(H) \le (6\eps_2/ \eps_3)n^3 \le \eps_3 n^2$.
			
			Let $k_0$ be the maximum $i$ such that $|X_i| \ge \eps_4 n$.
			Define subsets $X_i' \subseteq X_i$ as follows: if $i < k_0$ let $X_i'$ be the set of vertices in $X_i$ that have degree at least $|X_i| - \eps_4 n$ in $H[X_i]$ and degree at least $|X_{>i}| - \eps_4 n$ in $H[X_i, X_{>i}]$; if $i \ge k_0$, define $X_i' := \emptyset$.
			Since $ (\eps_4 n / 2)\sum_{i < k_0} |X_i \setminus X_i'| \le \be(H) \le \eps_3 n^2$ and $|X_{\ge k_0}| \le 10 \eps_4 n$ (using \ref{itm:stability-c4}), we have $$\sum_{i \in [k]} |X_i \setminus X_i'| \le 10\eps_4 n + (2\eps_3/\eps_4)n \le 20 \eps_4n.$$
			Let $X := X_1' \cup \ldots \cup X_k'$ and $Y := V \setminus X$. We have seen that $|Y| \le 20\eps_4 n \le \eps_5 n$.

			For $v \in V$, let $N(v)$ be the \emph{link} of $v$, namely the graph spanned by pairs $uw$ such that $uvw \in E(\HH)$.
			Write $A := X_1'$ and $B := X \setminus X_1'$.
			\begin{claim} \label{claim:structure}
				One of the graphs $N(u)[A]$ and $N(u)[A, B]$ has at most $\eps_6 n^2$ non-edges, for every $u \in V$.
			\end{claim}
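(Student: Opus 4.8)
First I would dispose of $u\in A=X_1'$ directly from the definitions: by definition of $X_1'$, the vertex $a\in A$ has $H$-degree at least $|X_{>1}|-\eps_4 n$ into $X_{>1}$, and unwinding the definition of $H$, each such $H$-neighbour $w\in X_{>1}$ lies with $a$ in at least $|X_1|-\eps_3 n$ edges $\{a,a',w\}$ of $\HH$ with $a'\in X_1$, i.e.\ inside the link $N(a)$ the vertex $w$ has at least $|X_1|-\eps_3 n$ neighbours in $X_1$. Hence $N(a)[X_1,X_{>1}]$ misses at most $2\eps_4 n^2$ pairs, and since $|X_1\triangle A|,|X_{>1}\triangle B|\le|Y|\le\eps_5 n$ we get that $N(a)[A,B]$ misses at most $\eps_6 n^2$ pairs, which is the conclusion (the second alternative). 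I would then record two consequences for later use: summing this over $a\in A$ shows that all but at most $\eps_6 n^3$ of the $AAB$-triples are edges of $\HH$, and the structure of $G$ from \Cref{thm:stability} likewise shows that all but $\eps_6 n^3$ of the $AAA$- and $ABB$-triples are \emph{non}-edges; and by \Cref{prop:min-deg} and \Cref{prop:max-deg} we have $d(v)=(3\alpha\pm\eps_2)n^2$ for every vertex $v$.

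\textbf{Reducing the general case to controlling anomalous edges.}
Now fix $u\notin A$ and assume for contradiction that both $N(u)[A]$ and $N(u)[A,B]$ have more than $\eps_6 n^2$ non-edges. Since $X_1\cup\cdots\cup X_k$ partitions $V$ there is a unique $i$ with $u\in X_i$; let $L_i$ be the link $u$ would have in the iterated blow-up on $X_1,\ldots,X_k$ (the pairs inside $X_m$ for $m<i$, together with the pairs in $(X_i\setminus u)\times X_{>i}$). Using the identity $2\beta^2-6\beta+3=0$ satisfied by $\beta=(3-\sqrt3)/2$ one checks that $|L_i|=(3\alpha+o(1))n^2$ for \emph{every} $i$, so $|L_i|=d(u)+O(\eps_2)n^2$. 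If $N(u)$ differed from $L_i$ in at most $\eps_6 n^2/2$ pairs then, since $L_i$ restricted to $A$ is complete when $i\ge 2$ and $L_i$ restricted to $A\times B$ is complete when $i=1$ (up to $O(\eps_5)n^2$ pairs, because of $Y$), one of $N(u)[A]$, $N(u)[A,B]$ would miss at most $\eps_6 n^2$ pairs, contrary to assumption. Hence $N(u)$ differs from $L_i$ on more than $\eps_6 n^2/2$ pairs, and because $|L_i|=d(u)+O(\eps_2)n^2$, the vertex $u$ lies in at least $\eps_6 n^2/3$ \emph{anomalous} edges $\{u,p,q\}$ with $\{p,q\}\notin L_i$. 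It therefore suffices to show that $u$ cannot lie in this many anomalous edges.

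\textbf{Finding a short odd pseudocycle, and the modification argument.}
The point is that an anomalous edge at $u$ is incompatible with the tripartite structure: in the iterated blow-up one can orient and $2$-colour the pairs so that the apices of the cherries along any pseudocycle occupy the positions of a single fixed residue mod $3$, which is exactly why every pseudocycle there has length divisible by $3$; an anomalous edge at $u$, together with well-behaved edges at $u$, forms a length-$3$ path in $N(u)$ whose three cherries at $u$ cannot be apex-coherent with this phase. Concretely, after discarding the $O(n^2)$ anomalous pairs meeting $\bigcup_{m>M}X_m$ for a suitable constant $M=M(\eps_6)$, one finds a single ``type'' of anomalous pair seen $\Omega(\eps_6 n^2)$ times by $u$; this family being dense it contains a path $p_1p_2p_3p_4$, and combined with the abundant $AAB$-edges one produces a length-$3$ path in $N(u)$ and a pseudocycle $p_1,p_2,u,p_3,p_4,Q$, where $Q$ is a pseudopath from $(p_3,p_4)$ to $(p_1,p_2)$ of length at most $\ell$ built from $AAB$-edges (and their analogues inside the smaller blow-ups); the incoherent cherry at $u$ shifts the length by $1\pmod 3$ relative to the tripartite-consistent value, giving an odd pseudocycle of length at most $\ell$ — contradicting the hypothesis on $\HH$. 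For the handful of vertices $u$ where this stalls (too few well-behaved edges at $u$ to seed $Q$, or anomalous edges that happen to be locally coherent) I would instead compare $\HH$ with the $O(1)$ hypergraphs obtained by replacing $N(u)$ by $L_j$ for the relevant indices $j$: by the vertex-substitution argument of \Cref{prop:min-deg} such a replacement introduces no short odd pseudocycle, so maximality of $\HH$ gives $|L_j|\le d(u)$, and since $|L_j|=d(u)+O(\eps_2)n^2$ this forces $N(u)$ to be essentially contained in $L_j$ for the optimal $j$, again yielding the conclusion.

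\textbf{The main obstacle.}
The delicate step is the pseudocycle construction: one must choose an anomalous edge whose insertion genuinely changes the length modulo $3$ rather than being absorbed, thread it into a pseudopath of the right residue and of length below $\ell$ using only the typical $AAB$-structure, and control the interaction with the small set $Y$ and with atypical vertices — all with essentially no numerical slack, since every candidate link $L_i$ has exactly the same leading term $3\alpha n^2=d(u)+o(n^2)$, so the modification arguments cannot be driven by a net gain of edges but must go through preservation of the cycle structure together with the maximality of $\HH$.
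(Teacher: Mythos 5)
Your treatment of $u\in X$ is fine, but the main case ($u\in Y$, or more generally $u$ whose actual link disagrees with its nominal layer) has a genuine gap at its central step. The implication ``many anomalous edges at $u$ with respect to $L_i$ $\Rightarrow$ a short odd pseudocycle'' is false, and cannot be repaired, because the conclusion of the claim explicitly tolerates the main source of anomalies: a vertex placed in $X_1$ (or in $Y$) whose link is essentially $A^{(2)}$ has $\Omega(n^2)$ pairs outside $L_1$ yet creates no odd pseudocycle at all. You acknowledge these ``locally coherent'' anomalies but delegate them to a fallback, and the fallback is exactly where the argument breaks: replacing $N(u)$ by the ideal link $L_j$ is \emph{not} the substitution of \Cref{prop:min-deg} (there one clones the link of an existing vertex, so any pseudocycle through the clone pulls back to one in $\HH$); $L_j$ is the link of the ideal iterated blow-up, and a short pseudocycle in the modified hypergraph could alternate between the new ideal link at $u$ and the genuinely non-ideal parts of $\HH$ (edges meeting $Y$, stray $AAA$/$ABB$-type edges, missing $AAB$ triples), so nothing guarantees the modified hypergraph is still free of short odd pseudocycles. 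This admissibility question is the crux, and it is precisely why the paper never modifies $\HH$ itself: it builds the clean hypergraph $\F$ on $X$ (all $X_i'X_i'X_j'$ triples) together with the single vertex $u$ whose link is trimmed to the supported pairs $E(H)\cap E(N(u))$, and proves the lifting statement \eqref{eqn:F-to-H} (pseudopaths in $\F$ between $H$-edges lift to pseudopaths in $\HH$ with length shifted by $0$ or $3$) so that odd-pseudocycle-freeness can be transferred from $\HH$ to $\Fp$ and to the comparison hypergraphs $\Fp_1,\Fp_2$. Your pseudopath $Q$ ``built from the abundant $AAB$-edges'' needs exactly this machinery (it is the fact that $H$-pairs lie in \emph{many} $\HH$-triples that lets one thread genuine $\HH$-paths of controlled length and controlled residue mod $3$), and you do not supply it.

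Even granting admissibility of the replacements, the closing inference is a non sequitur: from maximality (or \Cref{prop:max-deg}) you get only a comparison of cardinalities, $|L_j|\le d(u)$ with $|L_j|=d(u)+O(\eps_2)n^2$, and two sets of nearly equal size need not nearly coincide, so this cannot ``force $N(u)$ to be essentially contained in $L_j$''. The paper derives the dichotomy from structural facts about the trimmed link $F$ itself: the sets $A_0,A_1$ (vertices of $A$ meeting $AA$- resp.\ $AB$-edges of $F$) are disjoint and $B_1$ is independent in $F$, proved by exhibiting explicit pseudocycles of length $7$ and $8$ in $\Fp$; only then does the quantitative comparison $e(F_i)-e(F)\le 2\mu n^2$ (via \Cref{prop:max-deg} applied to $\Fp_1,\Fp_2$) close the argument in two cases. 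So while you name the right ingredients (the degree propositions, idealized links, short-cycle contradictions, and correctly the identity behind $|L_i|\approx 3\alpha n^2$), the two load-bearing steps --- transferring pseudocycle parity between $\HH$ and the idealized structure, and converting near-extremal degree into the $A$-versus-$A\times B$ dichotomy --- are missing or replaced by steps that fail as stated.
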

			
			\begin{proof}
				Let $\eps_5 \ll \mu \ll \eps_6$. Note that the claim holds for all $u \in X$, so it suffices to prove it for $u \in Y$. Fix such $u$.
				
				Let $\F$ be the hypergraph on vertex set $X$ whose edges are all $X_i'X_i'X_j'$ triples with $1 \le i < j \le k_0$. We will construct two hypergraphs $\F_i^{+}$ (for $i \in \{1, 2\}$), that consist of $\F$ with one additional vertex $u_i$, which is a suitable modification of $u$, and that have no odd pseudocycles of length at most $\ell / 10$. We will argue that if both $N(u)[A]$ and $N(u)[A, B]$ have at least $\eps_6 n^2$ non-edges then $d_{\F_i^+}(u_i) >(3\alpha + \mu)n^2$ for some $i \in [2]$, contradicting \Cref{prop:max-deg}.
				
				Let $F_0$ be the graph on vertex set $X$ with edges $E(H) \cap E(N(u))$. Recall that vertices in $X_i'$ have at most $2\eps_4 n$ non-neighbours in $H[X'_{>i}]$. Thus,  using \Cref{prop:min-deg} for a lower bound on $d_{\HH}(u)$, we have $e(F_0) \ge d_{\HH}(u) - |Y| \cdot n - |X| \cdot 2\eps_4 n \ge (3\alpha - 10\eps_5)n^2$.
				We modify $F_0$ as follows, while possible: remove each edge $xy$ satisfying: $x, y \in A$ and $x$ has degree $1$ in $A$; or $x \in A$, $y \in B$, and $x$ has degree $1$ into $B$ or $y$ has degree $1$ into $A$. Call the resulting graph $F$ and notice that $|E(F_0) \setminus E(F)| \le 2n$, implying that 
				\begin{equation} \label{eqn:deg-u}
					e(F) \ge (3\alpha - 20 \eps_5)n^2.
				\end{equation}
				
				Recall that $\F$ is the hypergraph on vertex set $X$ whose edges are all $X_i'X_i'X_j'$ triples with $1 \le i < j \le k_0$, and let $\Fp$ be the hypergraph obtained by adding the vertex $u$ to $\F$ along with all edges $uvw$ such that $vw \in E(F)$.
				We argue that $\Fp$ has no odd pseudocycles of length at most $\ell/10$. To do so, we prove the following.
				\begin{align} \label{eqn:F-to-H}
					\begin{split}
						&\text{Let $xy, vw \in E(H)$, and let $P$ be a pseudopath  in $\F$ from $xy$ to $vw$ on $t$ vertices. Then}  \\
						&\text{there is a pseudopath $P'$ in $\HH$ from $xy$ to $vw$ of order $t$ (if $t \in \{2, 3\}$) or $t + 3$ (otherwise).}
					\end{split}
				\end{align}
				
				We prove \eqref{eqn:F-to-H} by induction on $t$. If $t = 2$ we can take $P' = P$. Suppose that $t = 3$, so $P = xyw$. Let $i_1, i_2, i_3$ be such that $x \in X_{i_1}'$, $y \in X_{i_2}'$ and $w \in X_{i_3}'$.
				Since $xy \in E(H)$, we know that for almost every $a \in X_{i_3}'$ the following holds: $xya \in E(\HH)$ and $ya \in E(H)$; pick such an $a$ with $a \neq w$. Similarly, $yab$ and $ywb$ are edges in $\HH$ for almost every $b \in X_{i_1}'$; pick such $b$. The path $xyabyw$ satisfies the requirements.
				
				Next, suppose that $t = 4$, so $P = xyvw$. Let $i_1, i_2, i_3, i_4$ be such that $x \in X_{i_1}'$, $y \in X_{i_2}'$, $v \in X_{i_3}'$ and $w \in X_{i_4}'$. As $xy \in E(H)$, almost all $a \in X_{i_3}'$ satisfy $xya \in E(\HH)$ and $ya \in E(H)$; fix such $a$. Similarly, almost all $c \in X_{i_2}'$ satisfy $cvw \in E(\HH)$, $cv \in E(H)$ and $ac \in E(H)$; fix such $c$. Finally, almost every $b \in X_{i_3}'$ satisfies $yab, abc, bcv \in E(\HH)$; fix such $b$. Then $P' = xyabcvw$ satisfies the requirements.
				
				Finally, suppose that $t \ge 5$, and write $P = v_1 \ldots v_t$, so $x = v_1$, $y = v_2$, $v = v_{t-1}$ and $w = v_t$. Let $i_j$ be such that $v_j \in X_{i_j}'$ for $j \in [t]$. As usual, since $xy = v_1 v_2 \in E(H)$, almost all $a \in X_{i_3}'$ satisfy: $v_2 a \in E(H)$ and $v_1 v_2 a \in E(\HH)$. Let $Q = v_2 a v_4 \ldots v_t$. Then $Q$ is a pseudopath in $\F$ of order $t-1$ that starts and ends with edges in $H$. By induction, there is a pseudopath $Q'$ in $\HH$ from $v_2 a$ to $v_{t-1} v_t$ of order $t + 2$. Then we can take $P' = v_1 Q'$, completing the proof of \eqref{eqn:F-to-H}.
				
				Now suppose that $C = v_1 \ldots v_t$ is a pseudocycle in $\Fp$, where $t \le \ell/10$. We need to show that $t$ is divisible by $3$. If $C$ does not go through $u$, then $C$ is in $\F$, implying that $t$ is indeed divisible by $3$. So we may assume that $C$ goes through $u$ at least once. This shows that $C$ can be written as $u P_1 u \ldots u P_k$, where $P_i$ is a pseudopath in $\F$ whose first two vertices and last two vertices form edges in $F$. It follows from \eqref{eqn:F-to-H} that for each $i \in [k]$ there is a pseudopath $P_i'$ in $\HH$ whose first two vertices and last two vertices match those of $P_i$ and whose order satisfies $|P_i'| - |P_i| \in \{0, 3\}$. Then $C' := u P_1' u \ldots u P_k'$ is a cycle in $\HH$ with $|C'| \le |C| + 3k \le 4|C| \le \ell$ and $|C'| \modthree{|C|}$. By the properties of $\HH$, we have that $|C'|$ is divisible by $3$, implying that $|C|$ is divisible by $3$, as required.
				
				Let $A_0$ and $A_1$ be the sets of vertices in $A$ incident with $AA$ and $AB$ edges in $F$, respectively (that is, $a_0 \in A_0$ if $F$ contains an edge $a_0x$ with $x \in A$). 
				To show that $A_0$ and $A_1$ are disjoint, assume that $a_1 \in A_0 \cap A_1$, so that there is a path $a_0 a_1 b_0$ in $F$ with $a_0, a_1 \in A$ and $b_0 \in B$. By construction of $F$, $b_0$ has an $F$-neighbour $a_2 \in A - A_1$, so  $a_0a_1b_0a_2$ is a path in $F$. Let $a_3$ and $b_1$ be arbitrary vertices in $A$ and $B$, respectively (distinct from previously chosen vertices). Then $a_0 a_1 u b_0 a_2 a_3 b_1$ is cycle of length 7 in $\Fp$, a contradiction.
				
				Let $B_1$ be the set of vertices in $B$ incident with $AB$ edges in $F$. We  claim that $B_1$ is independent in $F$. Indeed, otherwise there is a path $a_1b_1b_2a_2$ in $F$, using a similar argument to the above paragraph. Now, choosing $a_3, a_4 \in A$ and  $b_3 \in B$ to be arbitrary unused vertices, we obtain a cycle $a_1b_1ub_2a_2 a_3 b_3 a_4$ of length 8 in $\Fp$ and reach a contradiction.
				
				Let $F_1$ and $F_2$ be graphs on vertex set $X$, defined as follows: $E(F_1) = A \times B$ and $E(F_2) = A^{(2)} \cup E(F[B])$. Now define $\Fp_i$ to be the graph obtained from $\F$ by adding a new vertex $u_i$ and edges $u_i vw$ such that $vw \in E(F_i)$, for $i \in [2]$. Thus $\Fp_i$ and $\Fp$ differ only on edges touching $u_i$ or $u$.
				We claim that $\Fp_i$ has no odd pseudocycles of length at most $\ell/10$. Indeed, this is easy to see for $i = 1$, because we can think of $\Fp_1$ as obtained by extending $X_1'$ by one vertex.  To see that this also holds for $i = 2$, notice that in $\Fp_2$, the $AAB$ and $BBB$ triples are in different strong components, so any pseudocycle  $C$ in $\Fp_2$ is either a pseudocycle in $\Fp$ or consists only of edges containing exactly two vertices from $A$. 
				
				Notice that $e(\Fp_i) \ge c(G) - |Y|n^2 \ge f(n) - (\eps_1 + \eps_5) n^3 \ge f(n) - 2\eps_5 n^3$, because all cherries in $G$ that do not touch $Y$ are edges in $\F$ and $c(G) \ge c(G') \ge f(n) - \eps_1 n^3$. Using this lower bound and the fact that $\Fp_i$ has no odd pseudocycles of length at most $\ell/10$, \Cref{prop:max-deg} implies that $d_{\Fp_i}(u_i) \le (3\alpha + \mu)n^2$. 
				Since $d_{\Fp}(u) = e(F) \ge (3\alpha - 20\eps_5)n^2$ (see \eqref{eqn:deg-u}), we have $e(F_i) - e(F) = d_{\Fp_i}(u_i)-d_{\Fp}(u) \le (\mu + 20\eps_5)n^2 \le 2\mu n^2$ for $i \in [2]$. 
				
				To finish, suppose first that $|A_0| \ge |B_1|$. Recalling that $F$ and $F_1$ coincide on $B$, and that $F$ has no edges in $(A_1 \cup B_1) \times A_0$ or $A_1^{(2)}$, we have
				\begin{align*}
					2\mu n^2 \ge e(F_2)-e(F) 
					& \ge - |A_1||B_1| + |A_0||A_1| + \binom{|A_1|}{2} + \be(F[A_0]) \\
					& \ge \frac{|A_1|^2}{2} + \be(F[A_0]) + O(n). 
				\end{align*}
				It follows that $|A_1| \le 5\mu^{1/2} n$ and $\be(F[A_0]) \le 5\mu n^2$. Altogether $\be(F[A]) \le |A_1| \, n + \be(F[A_0]) \le 10 \mu^{1/2} n^2 \le \eps_6 n^2$. Since $F[A] \subseteq N(u)[A]$, \Cref{claim:structure} is proved in this case.
				
				Now we consider the remaining case, namely that $|A_0| \le |B_1|$. Let $B_0 = B \setminus B_1$, and recall that $F$ has no edges in $B_1^{(2)}$ or in $A_0 \times B_1$. Using $|A| \ge |B| = |B_0| + |B_1|$,
				\begin{align*}
					2\mu n^2 & \ge e(\Fp_1) - e(\Fp) \\
					& \ge -\binom{|A_0|}{2} - \binom{|B_0|}{2} - |B_0||B_1| + |A||B_0| + |A_0||B_1| + \bar{e}(F[A_1, B_1]) \\
					& \ge |A_0|(|B_1| - |A_0|) + |B_0|(|A| - |B_0| - |B_1|) + \frac{|A_0|^2}{2} + \frac{|B_0|^2}{2} + \be(F[A_1, B_1]) + O(n) \\
					& \ge \frac{|A_0|^2}{2} + \frac{|B_0|^2}{2} + \be(F[A_1, B_1]) + O(n).
				\end{align*}
				Thus, we have $|A_0|, |B_0| \le 5 \mu^{1/2} n$ and $\be(F[A_1, B_1]) \le 5\mu n^2$. 
				This implies that $\be(F[A, B]) \le |A_0|\, n + |B_0|\, n + \be(F[A_1, B_1]) \le \eps_6 n^2$, proving \Cref{claim:structure}.
			\end{proof}
			
			Let $\As$ be the set of vertices $u$ such that $N(u)[A, B]$ has at most $\eps_6 n^2$ non-edges, and let $\Bs := V \setminus \As$. Note that $A \subseteq \As$, and by \Cref{claim:structure}, for every $u \in \Bs$ the graph $N(u)[A]$ has at most $\eps_6 n^2$ non-edges.
			Let $t_1$ be the number of $\As\As\As$ triples in $\HH$, let $t_2$ be the number of $\As\Bs\Bs$ triples in $\HH$, and let $s$ be the number of $\As\As\Bs$ triples that are not edges in $\HH$.
			Let $\Hs$ be the hypergraph obtained from $\HH$ by removing all $\As\As\As$ and $\As\Bs\Bs$ triples and adding all missing $\As\As\Bs$ triples. Then $\Hs$ has no odd pseudocycle of length at most $\ell$; this follows from observing that every pseudocycle in $\Hs$ is either a pseudocycle in $\HH$ or each of its edges has exactly two vertices in $\As$. Moreover, $e(\Hs) - e(\HH) = s - (t_1 + t_2)$. By maximality of $\HH$ we have $s \le t_1 + t_2$.
			
			\begin{claim}
				$t_1 \le \eps_7 s$.
			\end{claim}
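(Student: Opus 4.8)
The plan is to show that every $\As\As\As$ edge of $\HH$ is responsible for many $\As\As\Bs$ non‑edges, and to arrange the accounting so that the contribution of a single edge is spread efficiently over distinct non‑edges. For a $2$‑set $p=\{y_1,y_2\}\subseteq\As$, write $B^+(p):=\{b\in\Bs : y_1y_2b\in E(\HH)\}$ and $s_p:=|\Bs|-|B^+(p)|$; then $s=\sum_p s_p$, the sum being over $2$‑subsets of $\As$. I will use what has already been established: $A\subseteq\As$, and (from the size estimates together with the fact that all but $O(\eps_2 n^3)$ edges of $\HH$ are cherries of $G$) $\As$ and $\Bs$ differ from $A$ and $B$ by only $O(\eps_5 n)$ vertices, so $|\Bs|\ge n/4$; moreover $N(y)[A,\Bs]$ has at most $2\eps_6 n^2$ non‑edges for every $y\in\As$, and $N(b)[A]$ has at most $2\eps_6 n^2$ non‑edges for every $b\in\Bs$.

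The key step — and the part I expect to be the main obstacle — is a statement about a single edge. Call $b\in\Bs$ \emph{$x$‑typical} (for $x\in\As$) if $xba\in E(\HH)$ for all but at most $\sqrt{\eps_6}\,n$ vertices $a\in A$, and call $a\in A$ \emph{$b$‑typical} if $baa'\in E(\HH)$ for all but at most $\sqrt{\eps_6}\,n$ vertices $a'\in A$; the link conditions above imply that, for a fixed $x$ or $b$, all but $O(\sqrt{\eps_6}\,n)$ of the relevant vertices are typical. I claim that for each $\As\As\As$ edge $e=\{x_1,x_2,x_3\}$, at most one of the pairs $p_{ij}:=\{x_i,x_j\}$ has $|B^+(p_{ij})|>2\sqrt{\eps_6}\,n$. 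Suppose instead that two of them do; since any two pairs of a $3$‑set meet, after relabelling we may assume $|B^+(\{x_1,x_2\})|>2\sqrt{\eps_6}\,n$ and $|B^+(\{x_2,x_3\})|>2\sqrt{\eps_6}\,n$. Choose an $x_3$‑typical $b\in B^+(\{x_2,x_3\})$ and an $x_1$‑typical $b'\in B^+(\{x_1,x_2\})$ (possible, as each of these sets has more than $2\sqrt{\eps_6}\,n$ elements but only $O(\sqrt{\eps_6}\,n)$ atypical ones), and then greedily pick distinct $a_1,a_2\in A\setminus\{x_1,x_2,x_3,b,b'\}$ so that $x_3ba_1$, $ba_1a_2$, $a_1a_2b'$ and $a_2b'x_1$ are all edges of $\HH$ — at each step typicality bounds by $O(\sqrt{\eps_6}\,n)$ the set of vertices we must avoid, so there is plenty of room. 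Then
\[
x_1\; x_2\; x_3\; b\; a_1\; a_2\; b'
\]
is a pseudocycle of length $7$ in $\HH$; since $3\nmid 7$ and $7\le\ell$, this contradicts the hypothesis that $\HH$ has no odd pseudocycle of length at most $\ell$. (If $x_1,x_2,x_3$ had a common $B^+$‑vertex $b$, the shorter pseudocycle $x_1x_2x_3b$ — a copy of $K_4^{(3)}$ — would already do.) Hence at least two of the pairs $p_{ij}$, which share a common vertex of $e$ to be called the \emph{hub} of $e$, satisfy $|B^+(p_{ij})|\le 2\sqrt{\eps_6}\,n$, i.e.\ $s_{p_{ij}}\ge|\Bs|-2\sqrt{\eps_6}\,n\ge n/5$.

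It remains to count. Let $\mathcal{P}:=\{p\subseteq\As : |p|=2,\ s_p\ge n/5\}$; by the previous paragraph each $\As\As\As$ edge has two pairs of $\mathcal{P}$ meeting at its hub. For $y\in\As$ set $N_y:=\{v\in\As : \{y,v\}\in\mathcal{P}\}$, so the number of $\As\As\As$ edges with hub $y$ is at most $\binom{|N_y|}{2}$ and therefore $t_1\le\sum_{y\in\As}\binom{|N_y|}{2}$. If $v\in N_y$ then $yvb\notin E(\HH)$ for at least $n/5$ vertices $b\in\Bs$, i.e.\ $v$ has at least $n/5$ non‑neighbours in $\Bs$ inside the link $N(y)$; summing over $v\in N_y\cap A$ and using that $N(y)[A,\Bs]$ has at most $2\eps_6 n^2$ non‑edges (and $|N_y\setminus A|\le|\As\setminus A|=O(\eps_5 n)$) gives $|N_y|\le 11\eps_6 n$. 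On the other hand every pair of $\mathcal{P}$ contributes at least $n/5$ to $s=\sum_p s_p$, so $|\mathcal{P}|\le 5s/n$ and $\sum_{y\in\As}|N_y|=2|\mathcal{P}|\le 10s/n$. Combining,
\[
t_1\le\sum_{y\in\As}\binom{|N_y|}{2}\le\tfrac12\Big(\max_{y}|N_y|\Big)\sum_{y\in\As}|N_y|\le\tfrac12\cdot 11\eps_6 n\cdot\frac{10s}{n}=55\,\eps_6\,s\le\eps_7\,s,
\]
using $\eps_6\ll\eps_7$. This would prove the claim; in particular it yields $t_1=0$ whenever $s=0$, which is the use it is put to afterwards.
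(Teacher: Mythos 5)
Your argument is correct, but it follows a genuinely different route from the paper. The paper first proves a codegree bound: for any two vertices $u,v\in \As$ there are at most $\mu n$ vertices $w\in\As$ with $uvw\in E(\HH)$ (otherwise two common neighbours $w_1,w_2$ and a suitable $b\in B$ give the $5$-cycle $uvw_1bw_2$), and then double-counts $4$-sets $\{u,v,w,z\}$ with $u,v,w\in\As$, $z\in\Bs$, $uvw\in E(\HH)$, $uvz\notin E(\HH)$: every $\As\As\As$ edge together with every $z\in\Bs$ forces at least one missing $\As\As\Bs$ triple (else $uvwz$ is a $K_4^{(3)}$, i.e.\ a $4$-pseudocycle), while each missing triple extends to at most $\mu n$ such $4$-sets by the codegree bound, giving $t_1\le s\mu n/|\Bs|\le\eps_7 s$. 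You instead show that each $\As\As\As$ edge has at least two pairs missing almost all of $\Bs$ (via a single forbidden configuration, the $7$-cycle $x_1x_2x_3ba_1a_2b'$ built from typical vertices), and then count through the ``hub'', bounding the number of bad partners per hub by $O(\eps_6 n)$ via the link condition and the total number of bad pairs by $O(s/n)$. Both arguments use the no-short-odd-pseudocycle hypothesis plus Claim 6.2 in the same spirit; the paper's forbidden cycles are shorter and its final bound is a one-line division, whereas your hub-counting is slightly more elaborate but avoids the separate codegree lemma. Two small caveats, neither fatal: your selection of an $x_3$-typical $b$ in $B^+(\{x_2,x_3\})$ is exactly at the boundary ($2\sqrt{\eps_6}n$ atypical vertices versus $|B^+|>2\sqrt{\eps_6}n$), so the constants should be given a little slack; and both your proof and the paper's rely on the (easily checked but unproven-in-the-text) facts that $|\As\setminus A|=O(\eps_5 n)$ and $|\Bs|=\Omega(n)$ --- you at least indicate the cherry-counting justification, while the paper uses $|\Bs|$ in the denominator implicitly.
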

			
			\begin{proof}
				Let $\eps_6 \ll \mu \ll \eps_7$.
				
				We first show that for every distinct $u, v \in \As$, there are at most $\mu n$ vertices $w \in \As$ such that $uvw \in E(\HH)$.
				
				Suppose there exist $u, v \in \As$ violating this. Let $W$ be the set of vertices $w \in \As$ such that $uvw \in E(\HH)$, so $|W|\geq \mu n$. 
				Consider the graph $(N(u) \cap N(v))[W, B]$; its edges are pairs $wb$ such that $w \in W$, $b \in B$, and $uwb, vwb \in E(\HH)$. This graph has at most $2\eps_6 n^2$ non-edges, by \Cref{claim:structure}. Thus there exists $b \in B$ with at least $\frac 12 \mu n$ neighbours in the aforementioned graph; denote its set of neighbours by $W'$. Now, by \Cref{claim:structure}, $b$ is adjacent in $\HH$ to all but at most  $\eps_6 n^2$ pairs in $W'$, so there exists a triple $w_1w_2b \in E(\HH)$ with $w_1, w_2 \in W'$. Thus $uvw_1bw_2$ is a pseudocycle of length 5, contradiction.

				To finish the argument, we count the four-tuples
				\begin{equation*}
					Q := \{ \{u, v, w, z \}: u, v, w \in \As, z \in \Bs, uvw \in E(\HH), uvz \notin E(\HH)\}
				\end{equation*}
				in two different ways. For each vertex $b \in \Bs$ and $\As\As\As$ triple $uvw \in E(\HH)$, at least one of the triples $uvb, uwb, vwb$ is not in $E(\HH)$ (since otherwise $\HH$ has a 4-cycle), so $|Q| \geq t_1|\Bs|$. On the other hand, it follows from the above paragraph that any $\As\As\Bs$ triple $uvz \notin E(\HH)$ extends to at most $\mu n$ elements of $Q$, so $|Q| \leq s \mu n$. Hence 
				\begin{equation*}
					t_1 \leq \frac{|Q|}{|\Bs|} \le \frac{s \mu n}{|\Bs|} \leq \eps_7 s,
				\end{equation*}
				as claimed.
			\end{proof}
			
			\begin{claim}
				$t_2 \le 2s/3$. 
			\end{claim}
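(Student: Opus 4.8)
The plan is to prove $t_2 \le 2s/3$ by a double count: show that every $\As\Bs\Bs$ edge of $\HH$ forces many $\As\As\Bs$ triples to be \emph{non}-edges (otherwise $\HH$ would contain a short odd pseudocycle), and that each $\As\As\Bs$ non-edge is charged this way only boundedly often. I would isolate two inputs. \emph{Pentagon constraint:} fix an $\As\Bs\Bs$ edge $\{u,z,z'\}$ ($u\in\As$, $z,z'\in\Bs$); by \Cref{claim:structure}, $N(z)[A]$ and $N(z')[A]$ each miss at most $\eps_6 n^2$ pairs. For distinct $a,a'\in A$ the sequence $z\,u\,z'\,a\,a'$ is a pseudocycle of length $5$ as soon as $\{u,z',a\}$, $\{z',a,a'\}$, $\{z,a,a'\}$, $\{u,z,a'\}$ are all edges; the middle two are edges for all but $O(\eps_6 n^2)$ pairs, so (no short odd pseudocycles) $\{u,z',a\}\notin E(\HH)$ or $\{u,z,a'\}\notin E(\HH)$ for all but $O(\eps_6 n^2)$ pairs $(a,a')$. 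With $T_w:=\{a\in A:\{u,w,a\}\in E(\HH)\}$ this gives $|T_z|\,|T_{z'}|=O(\eps_6 n^2)$, so, since $|A|=(\beta\pm o(1))n$, one of $T_z,T_{z'}$ has size $o(n)$. Call an $\As$--$\Bs$ pair $\{x,v\}$ \emph{light} if $|\{a\in A:\{x,v,a\}\in E(\HH)\}|=o(n)$; then every $\As\Bs\Bs$ edge contains a light pair, and every light pair $\{x,v\}$ witnesses $\ge(\beta-o(1))n$ distinct $\As\As\Bs$ non-edges $\{x,v,a\}$. \emph{Link constraint:} for $x\in A\subseteq\As$, \Cref{claim:structure} gives that $N(x)[A,B]$ misses $\le\eps_6 n^2$ pairs, so $\HH$ has $\ge|A||B|-\eps_6 n^2$ edges $\{x,a,b\}$ with $a\in A$, $b\in B$; using $3\alpha=\beta(1-\beta)$ we have $|A||B|=(3\alpha\pm o(1))n^2$, while $d_\HH(x)\le(3\alpha+\eps_2)n^2$ by \Cref{prop:max-deg}, so essentially every edge at $x$ has the shape $\{x,a,b\}$ with $a\in A$, $b\in B$; in particular $x$ lies in only $o(n^2)$ $\As\Bs\Bs$ edges.

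For the count, let $L$ be the number of light pairs. Each $\As\As\Bs$ non-edge $\{x,y,v\}$ (with $v\in\Bs$) is witnessed only by the at most two light pairs among $\{x,v\},\{y,v\}$, so summing the pentagon bound over light pairs gives $L\cdot(\beta-o(1))n\le 2s$. Since every $\As\Bs\Bs$ edge contains a light pair, $t_2\le\sum_{\{x,v\}\text{ light}}e_{\{x,v\}}$, where $e_{\{x,v\}}$ is the number of $\As\Bs\Bs$ edges through $\{x,v\}$. If one can show $e_{\{x,v\}}\le(\tfrac13+o(1))\beta n$ for every light pair (vertices of $Y$ and pairs with an endpoint in $\As\setminus A$ being negligible, as $|Y|\le\eps_5 n$), then $t_2\le L\cdot(\tfrac13+o(1))\beta n\le(\tfrac23+o(1))s$, hence $t_2\le 2s/3$ after absorbing the $o(1)$ (there is slack, since $\eps_6\ll\eps_7=0.1$). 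Combined with the already-proved $t_1\le\eps_7 s$ and with $s\le t_1+t_2$ (the maximality of $\HH$ applied to $\Hs$), this yields $(\tfrac13-\eps_7)s\le0$, i.e.\ $s=t_1=t_2=0$ — exactly the structural conclusion \Cref{thm:partition} requires (one then takes $A:=\As$, $B:=\Bs$, both nonempty).

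The main obstacle is the per-pair bound $e_{\{x,v\}}\le(\tfrac13+o(1))\beta n$ for light pairs: the link constraint gives it only \emph{on average} over the second coordinate (since $\sum_v e_{\{x,v\}}=2e(N(x)[\Bs])=o(n^2)$), so one still has to rule out a single $\As$-vertex sitting in $\Theta(n)$ $\As\Bs\Bs$ edges through one fixed pair $\{x,v\}$. This is where the near-extremality $e(\HH)\ge f(n)-\eps_1 n^3$ must be used most tightly: the identity $3\alpha=\beta(1-\beta)$ pins the link of each $A$-vertex to within $o(n^2)$ edges of the complete bipartite graph between $A$ and $B$, and one must leverage this rigidity — plausibly together with a further short-pseudocycle argument using two $\As\Bs\Bs$ edges through $\{x,v\}$ and auxiliary vertices of $A$ — to control $e_{\{x,v\}}$ individually rather than in aggregate. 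Everything else should be routine bookkeeping with the $\eps$-hierarchy already fixed in the proof of \Cref{thm:partition}.
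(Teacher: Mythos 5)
Your first steps (the pentagon constraint showing that every $\As\Bs\Bs$ edge contains a ``light'' $\As\Bs$ pair, and that each light pair witnesses $\ge(\beta-o(1))n$ missing $\As\As\Bs$ triples) match the paper's proof. But the argument as proposed does not close: it hinges on the per-pair bound $e_{\{x,v\}}\le(\tfrac13+o(1))\beta n$ for every light pair, which you yourself flag as unproved, and nothing available at this stage gives it. The near-regularity statements (\Cref{prop:min-deg}, \Cref{prop:max-deg}) control vertex degrees, not codegrees, and the link rigidity for $x\in A$ only bounds $\sum_v e_{\{x,v\}}$ on average (and says nothing for light pairs whose $\As$-endpoint lies in $\As\setminus A$, which need not be confined to $Y$); a priori a single light pair could sit in $\Theta(|\Bs|)$ many $\As\Bs\Bs$ edges. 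So the proposal has a genuine gap exactly where you located it.

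The reason you are forced into this strong per-pair bound is the factor $2$ you concede when charging each $\As\As\Bs$ non-edge to ``at most two'' light pairs; with that loss, the trivial bound $e_{\{x,v\}}\le|\Bs|\approx(1-\beta)n$ only yields $t_2\le 2\tfrac{1-\beta}{\beta}s\approx 1.15\,s$. The paper removes the factor of $2$ instead of strengthening the per-pair bound: it first shows that every vertex $v\in\Bs$ lies in at most $\mu n$ light pairs. Indeed, if $uv$ is light then $N(v)[\As]$ is missing at least $|\As|-\mu n$ pairs through $u$, so many light pairs at $v$ would give $N(v)[A]$ more than $\eps_6 n^2$ non-edges, contradicting \Cref{claim:structure} (recall $v\in\Bs$ forces $N(v)[A]$ to be nearly complete). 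With this, for each $v$ one may count, for every light pair $uv$, only the non-edges $\{u,a,v\}$ with $a$ avoiding the (few) other light $F$-neighbours of $v$; these are counted once, giving $s\ge 0.6n\cdot e(F)$, while trivially $t_2\le|\Bs|\cdot e(F)\le 0.4n\cdot e(F)$. The ratio $0.4/0.6=2/3$ finishes the claim, with no individual control of $e_{\{x,v\}}$ needed. If you add the ``few light pairs per $\Bs$-vertex'' lemma (a one-paragraph consequence of \Cref{claim:structure}) and re-do your count accordingly, your argument becomes the paper's.
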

			
			\begin{proof}
				Let $\eps_6 \ll \mu \ll \eps_7$.
				
				To begin with, we show that if $uvw$ is an $\As \Bs \Bs$ triple in $\HH$ (with $u \in \As$) then one of the pairs $uv$ and $vw$ is in at most $\mu n$ triples of form $\As\As\Bs$ in $\HH$. Fix an $\As\As\Bs$ triple $uvw \in E(\HH)$.
				
				Let $W'$ (resp.\ $V'$) be the set of vertices $a \in \As$ such that $uwa \in E(\HH)$ (resp.\ $uva \in E(\HH)$). Suppose that $|W'|, |V'| \geq \mu n$. Consider the graph $(N(v) \cap N(w))[W', V']$. By~\Cref{claim:structure}, this graph contains an edge $a_1a_2$, i.e.\ we have~$a_1a_2w$, $a_1a_2v \in E(\HH)$.  By definition of $W'$ and $V'$, the triples $uwa_1$ and $uva_2$ are in $\HH$. Hence $uwa_1a_2v$ is a cycle of length 5, contradiction.
				
				Let $F$ be an auxiliary bipartite graph with parts $\As$ and $\Bs$ such that $uv$ is an edge of $F$ whenever (i) there is an $\As\Bs\Bs$ triple in $\HH$ containing $uv$, and (ii) the number of $\As\As\Bs$ triples containing $uv$ is at most $\mu n$. By the previous paragraph, each $\As\Bs\Bs$ triple in $\HH$ contains an edge of $F$, so
				\begin{equation*}
					t_2 \leq |\Bs| \cdot e(F) \leq 0.4n \cdot e(F).
				\end{equation*}
				Moreover, we claim that $d_F(v) \le \mu n$ for every $v \in \Bs$. Indeed, by (ii), the graph $N(v)[\As]$ has at least $d_F(v)(|\As| - \mu n)/2$ non-edges. If $d_F(v)>\mu n$ then this quantity is larger than $2\eps_6 n^2$, contradicting \Cref{claim:structure}. Also using (ii), we conclude that
				\begin{equation*}
					s \ge \sum_{v \in \Bs} d_F(v) \cdot (|\As| - d_F(v) - \mu n) \ge 0.6 n \cdot  e(F).
				\end{equation*}
				It follows that $t_2 \le 2s/3$, as claimed.
			\end{proof}
			
			The last two claims, and the choice $\eps_7=0.1$, say, show that $(t_1 + t_2) \le 0.8 s$. Since $s \le t_1 + t_2$ this implies that $t_1 = t_2 = s = 0$. That is, all $\As\As\Bs$ triples are edges in $\HH$ (and there are no $\As\As\As$ or $\As\Bs\Bs$ edges). This proves \Cref{thm:partition}.
		\end{proof}
		
		\section{Open problems}
		There are several natural extensions of our result. Firstly, one could prove~\Cref{Conjecture_Mubayi_Rodl}, or perhaps determine the density of $C_\ell^{(3)}$ for smaller values of $\ell$, say $\ell \leq 100$. Although we do not state our bound on $\ell$ explicitly, this would not be too cumbersome, since it is a polynomial in $\eps_7$, and we set $\eps_7 = 0.1$.
		
		Of course our result should not extend to \emph{all} values of $\ell\equiv 1$ or $2 \pmod 3$, since for $\ell=4$, the tight cycle $C_{4}^{(3)}$ is the same as the tetrahedron $K_{4}^{(3)}$. Here the famous conjecture of Tur\'an says that $\pi(K_{4}^{(3)})=5/9$, which is attained by a wide family of extremal constructions~\cite{brown83,kostochka84,flaass88,razborov11}. Curiously, Fon-Der-Flaass showed that the conjectured extremal constructions $K_{4}^{(3)}$-free graphs can be constructed from oriented graphs in a manner reminiscent of Definition~\ref{def:good-col}. Specifically Fon-Der-Flass~\cite{flaass88} showed that if $D$ is an oriented graph with no induced directed 4-cycles, then the 3-graph formed by induced copies of $\{ab, ac\}$ and $\{ab,bc,ca\}$ will be $K_4^{(3)}$-free. 
		
		A second interesting direction is determining the Tur\'an density of $r$-uniform tight cycles for $r \geq 4$. For this, we do not even know of a conjectured optimal construction. Moreover, our characterisation of odd-pseudocycle-free hypergraphs (\Cref{thm:good-colouring}) does not have an obvious extension, as the straightforward extension of Definition~\ref{def:good-col} is too strong.

		Recall that in \Cref{thm:pseudocycles} we prove an almost tight result (up to a constant additive error) for the Tur\'an number of the family of pseudocycles of length $\ell$, for all $\ell \le L$ which are not divisible by $3$, and large enough $L$. It is plausible that the same could be proved for $\C_{\ell}$ for large enough $\ell$ which is not divisible by $3$. Namely, it is likely that $\ex(n, \C_{\ell}) \le f(n) + O(1)$ for such $\ell$? To tackle this, one is likely to require stability arguments, perhaps like those we used in \Cref{sec:diameter}. We remark that Liu, Mubayi, and Reiher \cite{liu2023unified} present a unified framework for tackling stability problems for a large class of hypergraph families. Unfortunately, this does not seem to be applicable in our case.
		
		As mentioned in the introduction, there are many other specific 3-uniform hypergraphs for which determining the Tur\'an density would be very interesting. Let us point out one conjecture which is perhaps less well known, and which can be found for instance in \cite{mubayi2011hypergraph}.
		\begin{conjecture} ~\label{conj:c5-minus}
			Let $\C_5^{-}$ be the 3-uniform hypergraph obtained from the tight 5-cycle $\C_5^3$ by removing one edge. The Tur\'an density of $\C_5^{-}$ is $\frac 14$.
		\end{conjecture}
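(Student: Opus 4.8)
The plan is the usual two-pronged one: a lower-bound construction achieving the claimed value, together with a matching upper bound obtained through a stability argument.

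For the lower bound, a natural candidate is an iterated blow-up construction in the spirit of \Cref{ex:iterated-blow-up}, but based on the balanced complete tripartite $3$-graph rather than on a single edge. Concretely: partition $V$ into three parts $V_1, V_2, V_3$ of size roughly $n/3$, take all rainbow triples (one vertex in each $V_i$), and recurse inside each $V_i$. If $c$ denotes the leading coefficient of the edge count (so the density is $6c$), then the recursion $c = \frac{1}{27} + 3 \cdot \frac{c}{27}$ gives $c = \frac{1}{24}$, hence density $\frac14$. This $3$-graph is $\C_5^{-}$-free: assign to each edge $e$ the deepest recursion level at which all three vertices of $e$ still lie in a common block (so that there they form a rainbow triple); one checks, exactly as in \Cref{ex:iterated-blow-up}, that two edges meeting in two vertices share a level and "point to" the same third part, and then tracing the four edges $123, 234, 345, 451$ of $\C_5^{-}$ through the parts forces vertex $1$ into two distinct parts --- a contradiction. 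One should also verify that the balanced split is optimal among iterated tripartite constructions, which is an elementary calculus exercise on the simplex.

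The hard direction is the matching upper bound $\pi(\C_5^{-}) \le \frac14$. Here none of the reductions used in the present paper are available: $\C_5^{-}$ is a single fixed $5$-vertex $3$-graph rather than an infinite family of long cycles, so \Cref{theorem_blow_up} cannot be used to pass to pseudo-configurations, there is no odd/even dichotomy to exploit, and there is no evident analogue of the good-colouring characterisation of \Cref{thm:good-colouring}. So the route I would pursue is: (i) establish the approximate bound $\pi(\C_5^{-}) \le \frac14 + o(1)$ \emph{together} with a stability statement --- namely that every $\C_5^{-}$-free $3$-graph of density at least $\frac14 - \eps$ is within $o(n^3)$ edges of the balanced iterated tripartite construction above; (ii) bootstrap stability to the exact result by the same symmetrisation-and-cleanup strategy used to prove \Cref{thm:partition}, i.e.\ a vertex-by-vertex local optimisation in which a would-be improving move is ruled out by exhibiting a copy of $\C_5^{-}$ (equivalently, five vertices $a,b,c,d,v$ with $abc, bcd, vab, vcd$ all edges).

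I expect step (i) to be the genuine obstacle. The strongest upper bounds on $\pi(\C_5^{-})$ known via flag algebras are not known to reach $\frac14$, so either a new (perhaps higher-order, or problem-specifically constrained) semidefinite certificate, or an honest combinatorial structure theorem for $\C_5^{-}$-free $3$-graphs in the spirit of \Cref{sec:good-col}, would be required; moreover the general stability transfer of Liu--Pikhurko--Sharifzadeh--Staden~\cite{liu2020stability} does not apply automatically, for the same reasons it did not apply to \Cref{theorem_intro_falgas_ravry}. Granting such an input, step (ii) should be comparatively routine, reusing the machinery of \Cref{sec:symm,sec:diameter} almost verbatim.
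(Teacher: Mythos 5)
The statement you are addressing is Conjecture~\ref{conj:c5-minus}, which the paper explicitly leaves open (it appears in the final section as an open problem, with no proof given), so there is no ``paper proof'' to compare against; the only question is whether your proposal actually proves the statement, and it does not. What you establish is the lower bound: the iterated balanced tripartite construction, the recursion $c = \tfrac{1}{27} + \tfrac{c}{9}$ giving $c = \tfrac{1}{24}$ and density $\tfrac14$, and the $\C_5^{-}$-freeness argument (tracing the four edges through the parts of the deepest block where the relevant pairs separate) are all correct --- but this is exactly the known construction that the paper itself describes immediately after the conjecture, and it only shows $\pi(\C_5^{-}) \ge \tfrac14$.

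The entire content of the conjecture is the matching upper bound $\pi(\C_5^{-}) \le \tfrac14$, and for this you offer no argument: your step (i) is stated as a goal, and you yourself identify it as ``the genuine obstacle'', noting that current flag-algebra bounds do not reach $\tfrac14$ and that no analogue of the good-colouring characterisation (\Cref{thm:good-colouring}) is available. That assessment is accurate, but it means the proposal is a research plan rather than a proof. Moreover, step (ii) cannot be taken as ``routine reuse'' of \Cref{sec:symm,sec:diameter}: the machinery there (pseudocycles, the parity obstruction, cherries and the symmetrisation count of \Cref{theorem_intro_falgas_ravry}) is tailored to the infinite family of odd tight cycles and to the blow-up invariance of \Cref{theorem_blow_up}, whereas $\C_5^{-}$ is a single fixed $3$-graph whose forbidden-configuration structure would require its own extremal and stability theorems before any cleanup step in the spirit of \Cref{thm:partition} could be run. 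So the gap is not a technical detail but the central missing ingredient: without a proof of the asymptotic upper bound (with stability), the statement remains, as in the paper, a conjecture.
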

		As in our case, one conjectured extremal hypergraph is an iterated construction; one may take a complete 3-partite 3-uniform hypergraph and then repeat the same construction recursively within each of the three parts. 
		Similarly to our result, Balogh and Haoran \cite{balogh2023turan} recently proved that the Tur\'an density of $\C_{\ell}^-$, for sufficiently large $\ell$ which is not divisible by $3$, is $\frac{1}{4}$.
		
		\subsection*{Acknowledgements}
		We would like to thank Jozsef Balogh, Xizhi Liu, Dhruv Mubayi, Yuejian Peng, Oleg Pikhurko, and Alexander Sidorenko for bringing to our attention several important references. We would also like to thank the anonymous referee for their helpful comments.
		
		\bibliography{refs-Turan}
		\bibliographystyle{amsplain}
	\end{document}